\numberwithin{equation}{section}
\newcommand{\D}{\mathrm{d}}
\newcommand{\I}{\mathrm{i}}
\newcommand{\scp}[2]{\langle #1,#2\rangle}
\DeclareMathOperator{\Tr}{Tr}
\theoremstyle{plain}
\newtheorem{theorem}{Theorem}[section]
\newtheorem{lemma}[theorem]{Lemma}
\theoremstyle{definition}
\newtheorem{remark}[theorem]{Remark}
\newtheorem{definition}[theorem]{Definition}
\titleformat{\paragraph}[runin]{\itshape\normalsize}{\theparagraph}{}{}
\titleformat{\subparagraph}[runin]{\itshape\normalsize}{\theparagraph}{0em}{}
\titleformat{\section}[block]{\normalfont\filcenter}{\Large\bf\thesection .}{.5em}{\Large\bf}
\titleformat{\subsection}[block]{\normalfont}{\large\bf\thesubsection .}{.5em}{\large \bf}
\titleformat{\subsubsection}[block]{\normalfont}{\bf\thesubsubsection .}{.5em}{\bf}
\begin{document}

\title{\textbf{Eigenvalue distribution of some nonlinear models of random matrices}}
\author{L. Benigni\\\vspace{-0.15cm}\footnotesize{\it{LPSM, Université Paris Diderot}}\\\footnotesize{\it{lbenigni@lpsm.paris}}\and S. Péché\thanks{S.P. is supported by the Institut Universitaire de France.}\\\vspace{-0.15cm}\footnotesize{\it{LPSM, Université Paris Diderot}}\\\footnotesize{\it{peche@lpsm.paris}}}
\date{}
\maketitle

\tikzset{every loop/.style={min distance=10mm,in=60,out=120,looseness=10}}
\tikzset{every node/.style={draw,circle, scale=1}}
\tikzset{every label/.append style={font=\fontsize{9}{10.8}\selectfont}}
\begin{abstract}
\small{This paper is concerned with the asymptotic empirical eigenvalue distribution of some non linear random matrix ensemble.
More precisely we consider $M= \frac{1}{m} YY^*$ with $Y=f(WX)$ where $W$ and $X$ are random rectangular matrices with i.i.d. centered entries.
The function $f$ is applied pointwise and can be seen as an activation function in (random) neural networks.
We compute the asymptotic empirical distribution of this ensemble in the case where $W$ and $X$ have sub-Gaussian tails and $f$ is real analytic. This extends a result of \cite{pennington2017nonlinear} where the case of Gaussian matrices $W$ and $X$ is considered.
We also investigate the same questions in the multi-layer case, regarding neural network and machine learning applications.}
\end{abstract}
\section{Introduction}
Machine learning has shined through a large list of succesful applications over the past five years or so (see for instance applications in image or speech recognition \cites{krizhevsky2012imagenet, hinton2012deep} or translation \cite{wu2016google}) but is also used now in video, style transfer, dialogues, games and countless other topics. The interested reader can go to \cite{schmidhuber2015deep} for an overview of the subject. However, a complete theoretical and mathematical understanding of learning is still missing. The main difficulty comes from the complexity of studying highly non-convex functions of a very large number of parameters \cites{choromanska2015loss, pennington2017geometry}.  We also refer to \cite{Zdeborova} for a comprehensive exposition of the problem we are interested in.

An artificial neural network can be modeled as follows: some input column vector $x\in \mathbb{R}^{n_0}$ goes through a multistage architecture of alternated layers with both
linear and non linear functionals: let $g_i: \mathbb{R} \to \mathbb{R}$, $i=1, \ldots, L$ be some given \emph{activation  functions} and $W_i,i=1\ldots L$ be $n_i\times n_{i-1}$ matrices. The output vector after layer $L$ is 
\begin{equation}s_1=g_1(W_1x), \quad s_i=g_i(W_is_{i-1}), i=2, \ldots,L .\label{FFNN}\end{equation}
The functions $g_i$ are here applied componentwise. 
The matrices $W_i$ are the (synaptic) weights in the  layer $i$ and the activation function $g_i$ models the impact of the neurons in the architecture. 
There are different possible choices for the activation functions: some notable examples are $g(x)=\max(0,x)$ (known as the ReLU activation function for Rectified Linear Unit) or the sigmoid function $g(x)=(1+e^{-x})^{-1}$. The parameter $L$ is called the depth of the neural network. This depth is important with respect to the question of machine learning in artificial neural networks: we hereafter introduce the problem of learning in this context.  One may also refer the interested reader to \cite{LeCun} for more information on the development of deep learning, i.e. when $L>1$. 

Generally in supervised machine learning, one is given a $n_0\times m$ matrix dataset $X$ coinjointly with a target dataset $Z$ of size $d\times m$. The parameter $m$ is here the sample size. For instance the $n_0-$dimensional column vectors of $X$ encode the (pixels of) photographs of cats and $Z$ is the $m$ sample of $d$ possible breeds of cats. The aim of supervised learning is to determine a function $h$ so that, given a \emph{new} photo $x$, the output of the function $h(x)$ yields an acceptable approximation of the target (true) object (that is the breed in our example). The parameters to be learned are here the weight matrices.  The error when performing such an approximation is measured through a loss function. 
In the context of Feed Forward Neural Networks as in (\ref{FFNN}), when the input vector is high dimensional and the sample size is comparably large, one of the commonly used learning method is ridge regression (and $h$ is linear in $Y$).   More precisely, in the one layer case ($L=1$) the loss function is 
$$B\in \mathbb{R}^{d\times n_1}\mapsto \mathcal{L}(B):= \frac{1}{2dm}||Z - B^* (g_1(W_1 X))||_F^2 +\gamma || B||_F^2,$$ where $\gamma$ is the \emph{learning rate} or penalizing parameter.  The optimal matrix $B$ can then be proved to be proportional to $Y QZ^*$ where $Y=(g_1(W_1 X))$ and \begin{equation}\label{defQ}Q=\left( \frac{1}{m}Y^*Y +\gamma I\right)^{-1}.\end{equation} As a consequence, the performance of this learning procedure can be measured thanks to the asymptotic spectral properties of the matrix $\frac{1}{m}Y^*Y .$ Indeed, for the one layer case, the expected training loss
can be proved to be related to the asymptotic e.e.d. (and Stieltjes transform). It is given by 
\[
\mathbb{E} (\mathcal{L}(B))= -\frac{\gamma^2}{m}\frac{\partial}{\partial \gamma} \mathbb{E} (\text{Tr } Q),
\]
where $Q$ is given by (\ref{defQ}) and $\text{Tr}$ denotes the unnormalized trace.

\vspace{\baselineskip}
A possible idea to understand better such large complex systems is to approximate the elements of the system by random variables as it is done in statistical physics and thermodynamics. This is the place where random matrix theory can bring its techniques in principle.
Random matrix theory has already been proved to be useful in machine learning. 
In \cite{giryes2016deep} for instance, neural networks with random Gaussian weights have been studied for practical interest while eigenvalues of non-Hermitian matrices were used to understand neural networks in \cite{rajan2006eigenvalue}. See also \cite{zhang2012nonlinear} who study echo state networks used to model nonlinear dynamical systems.
In \cites{benaych2016spectral, couillet2016kernel}, a random matrix approach has been used to do a theoretical study of spectral clustering by looking at the Gram matrix $WW^*$ where the columns of $W$ are given by random vectors. They compute the asymptotic deterministic empirical distribution of this matrix which allows the analysis of the spectral clustering algorithm in large dimensions. Nonlinear  random matrix models have also been studied in \cite{elkaroui2010spectrum} e.g.

We are here interested in random neural networks where both the number of samples $m$ and the number of parameters $n_0$ are large. We consider rectangular matrices of size $n_0\times m$ in the regime where $n_0/m$ goes to some constant $\phi$ as the dimension grows to infinity.
The study of such matrix models for random neural networks was first accomplished in \cites{louart2018random, pennington2017nonlinear}, where they consider
\[
M=\frac{1}{m}Y^*Y\in\mathbb{R}^{n_1\times n_1}\quad\text{with}\quad Y_{ij}=f\left(
	\frac{1}{\sqrt{n_0}}(WX)_{ij}
\right)
\quad\text{for}\quad
1\leqslant i\leqslant n_1,\quad
1\leqslant j\leqslant m.
\]
In the above equation $f$ is a nonlinear activation function, $W$ is the  $n_1\times n_0$ matrix corresponding to the weights and $X$ the $n_0\times m$ matrix of the data. There are several possibilities to incorporate randomness in this model. In \cite{louart2018random}, the authors consider random weights with \emph{deterministic data} $X$. The weights are given by functions of Gaussian random variables and the asymptotic eigenvalue distribution of $M$ is studied thanks to concentration inequalities in the case where the function $f$ is Lipschitz continuous. 
They prove that the eigenvalue distribution corresponds to that of a (usual) sample covariance matrix $\frac{1}{m}T^*X^*XT$ with population covariance $T^*T=\overline{M}$  as studied in \cite{silverstein1995empirical}. Thus the nonlinearity coming from applying the function $f$ entrywise is rather mysteriously hidden in the asymptotic empirical eigenvalue distribution. However, there is a major difference from a usual sample covariance matrix ensemble, which is
the non universality of the eigenvalue distribution (as $\overline{M}$ depends on the distribution of $W$ beyond its first two moments). The authors \cite{louart2018random} use this equation to study the effect of the fourth moments of the distribution for the efficiency of the neural networks. The general approach based on concentration arguments that they develop is detailed in the recent preprint \cite{LouartCouilletBis}.

On the other side, and this is the scope of this article,  \cite{pennington2017nonlinear} consider the case where both the matrices $W$ and $X$ are random as both matrices are chosen to be independent random matrices with normalized Gaussian entries. Thus, interestingly, they derive (using Gaussian integration and a saddle point argument) a fixed point equation for the Stieltjes transform of the asymptotic e.e.d., which is a \emph{quartic equation}. This equation will be recalled in our main Theorem \ref{theo:result2} below.\\
Before discussing our result, one may note that the quartic equation specializes in some special cases of the parameters to the Mar\v{c}enko-Pastur equation for the Stieltjes transform:
\[
zm(z)^2
+
\left(
	\left(
		1-\frac{\psi}{\phi}
	\right)z-1
\right)m(z)
+
\frac{\psi}{\phi}=0.
\]
Thus there exists a class of functions such that the nonlinear matrix model has the same limiting e.e.d. as that of Wishart matrices. It was then conjectured in \cite{pennington2017nonlinear} that choosing such an activation function could speed up training through the network.
The equation also becomes cubic when the function $f$ is linear and corresponds to the product Wishart matrix. The limiting e.e.d. of such matrices, known as the Fuss--Catalan or Raney distribution, has been computed in \cites{Penson2011,dupic2014spectral,Forrester}.\\
  We refer the reader to Sections 4 and 5 of \cite{pennington2017nonlinear} for a more detailed discussion on machine learning applications of such a result. In particular \cite{pennington2017nonlinear} use this equation to facilitate the choice of activation function, a problem which has a crucial impact on the training procedure. In \cite{hayou2019selection}, the choice of function was studied for random neural networks after going through a large number of layers. 
This is of particular interest to consider the multi-layer case, due to potential application to Feed Forward Neural Networks. It is achieved in \cite{pennington2017nonlinear}, where they conjecture that the Mar\v{c}enko-Pastur is invariant through multiple layers for some appropriate activation function $f$.
Interestingly for practical applications, one can then measure some performance of the network (and its depth) using the shape of the Marcenko-Pastur distribution.
For linear models, one may note the multilayer case corresponds to the maybe simpler setting of products of random matrices. One refers the reader to \cites{kuijlaars2014singular, claeys2015correlation, akemann1, akemann2, Penson2011} for products of complex Ginibre matrices and to \cite{hanin2018products} where a large product of large random matrices is considered.

\paragraph{}The scope of this paper is both theoretical and practical: one aims to study the asymptotic e.e.d. of such nonlinear models of random matrices $f(WX)$ where $f$ is applied entrywise and to extend the result established by \cite{pennington2017nonlinear} to non-Gaussian matrices. In particular, the question of universality of the limiting e.e.d. is of interest here as initial weights can be chosen to be non-Gaussian (a typical example is the uniform distribution as in \cite{glorot}). In this setting, it has to be compared to the result of \cite{elkaroui2010spectrum} where some kernel matrices are investigated (with another universal limiting empirical eigenvalue distribution).
 There is also some practical interest as it provides an easy way to compare different possible activation functions for a certain class of distribution for both weights and data. For practical purpose, we also investigate the multilayer case $Y^{(\ell)}=f(W^{(\ell-1)}Y^{(\ell-1)})$ for $\ell=1\dots L$ with $L$ fixed and study again the asymptotic empirical eigenvalue distribution for a class of activation functions. This gives one of the few theoretical results on multilayer nonlinear random neural networks and confirm the prediction made in \cite{pennington2017nonlinear}.
From a theoretical point of view, such a study is also of interest in random matrix theory itself as it introduces a new class of ensembles of random matrices as well as a new class for universality.

\paragraph{Acknowledgments.} The authors would like to thank D. Schr\"oder and Z. Fan for pointing out errors in a previous version of the article as well as anonymous referees for helpful suggestions on how to improve the present paper.

\section{Model and results}
Consider a random matrix $X\in\mathbb{R}^{n_0\times m}$ with $i.i.d.$ elements with distribution $\nu_1$. Let also $W\in\mathbb{R}^{n_1\times n_0}$ be a random matrix with $i.i.d.$ entries with distribution $\nu_2$. $W$ is called the weight matrix. Both distributions are centered and we denote the variance of each distribution by
\begin{equation}\label{eq:variance}
\mathds{E}\left[X_{ij}^2\right]=\sigma_x^2\quad\text{and}\quad\mathds{E}\left[W_{ij}^2\right]=\sigma_w^2.
\end{equation}
We also need the following assumption on the tails of $W$ and $X$: there exist constants $\vartheta_w,\,\vartheta_x>0$ and $\alpha>1$ such that for any $t>0$ we have

\begin{equation}\label{eq:assumwx}
\mathds{P}\left(
	\left\vert W_{11}\right\vert > t
\right)
\leqslant
e^{-\vartheta_w t^\alpha}
\quad\text{and}\quad
\mathds{P}\left(
	\left\vert X_{11}\right\vert > t
\right)
\leqslant
e^{-\vartheta_x t^\alpha}.
\end{equation}
Note that the above implies  that there exists a constant $C>0$ such that
\begin{equation}
\mathds{P}\left(
	\left\vert
		\frac{1}{\sqrt{n_0}}
		\sum_{k=1}^{n_0}
		W_{1k}X_{k1}
	\right\vert
	> t
\right)
\leqslant 
Ce^{-t^2/2}.
\end{equation}
We now consider a smooth function $f:\mathbb{R}\to \mathbb{R}$ with zero Gaussian mean in the sense that 
\begin{equation}\label{eq:assumfgauss}
\int f(\sigma_w\sigma_x x)\frac{e^{-x^2/2}}{\sqrt{2\pi}}\D x=0.
\end{equation}
As an additional assumption, we also suppose that there exist positive constants $C_f$ and $c_f$ and $A_0>0$ such that for any $A\geqslant A_0$ and any $n\in\mathbb{N}$ we have,
\begin{equation}\label{eq:assum2f}
\sup_{x\in[-A,A]}
\vert f^{(n)}(x)\vert
\leqslant
C_fA^{c_fn}.
\end{equation}
\begin{remark}
\eqref{eq:assum2f} guarantees that the function is real analytic which may be seen as a strong restriction. However, commonly used activation functions fall within the scope of this paper such as the sigmoid $f(x)=(1+e^{-x})^{-1}$, $f(x)=\tanh x$ or the softplus $f(x)=\beta^{-1}\log(1+e^{\beta x})$, i.e. a smooth variant of the ReLU. Extensions to more general (non analytic) functions $f$ is the object of current research.
\end{remark}

We consider the following random matrix, 
\begin{equation}\label{eq:defM}
M=\frac{1}{m}YY^*\in\mathbb{R}^{n_1\times n_1}\quad\text{with}\quad Y=f\left(\frac{WX}{\sqrt{n_0}}\right)
\end{equation}
where $f$ is applied entrywise. We suppose that the dimensions of both the columns and the rows of each matrix grow together in the following sense: there exist positive constants $\phi$ and $\psi$ such that 
\[
\frac{n_0}{m}\xrightarrow[m\rightarrow\infty]{}\phi,\quad\frac{n_0}{n_1}\xrightarrow[m\rightarrow\infty]{}\psi
\]
 Denote by $(\lambda_1,\dots,\lambda_{n_1})$ the eigenvalues of $M$ given by \eqref{eq:defM} and define its e.e.d. by
\begin{equation}\label{eq:empirical}
\mu_{n_1}=\frac{1}{n_1}\sum_{i=1}^{n_1}\delta_{\lambda_i}.
\end{equation} 
\begin{theorem}\label{theo:result1}
There exists a deterministic compactly supported measure $\mu$ such that we have
\[
\mu_{n_1}^{(f)}\xrightarrow[n_1\rightarrow\infty]{} \mu \quad\text{weakly almost surely}.
\]
\end{theorem}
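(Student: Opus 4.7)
My plan is a moment method argument combined with a polynomial truncation of $f$ and a variance bound to upgrade to almost-sure convergence. Set
$$m_k^{(n)}:=\frac{1}{n_1}\mathds{E}\,\Tr M^k,\qquad k\in\mathbb{N}.$$
I would first show that $m_k^{(n)}\to m_k$ for some deterministic sequence $(m_k)$ with at most exponential growth in $k$, so that Carleman's condition identifies a unique compactly supported measure $\mu$ with those moments, and then show $n_1^{-1}\Tr M^k\to m_k$ almost surely. Together with an a priori operator-norm bound (inherited from the sub-Gaussian tails) this gives the claimed weak a.s.\ convergence of $\mu_{n_1}$.

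The first technical step is a polynomial truncation of the activation. Writing $G_{ij}:=(WX)_{ij}/\sqrt{n_0}$, the assumption \eqref{eq:assumwx} makes the $G_{ij}$ uniformly sub-Gaussian, so on the event $\{\max_{i,j}|G_{ij}|\leqslant(\log n_0)^{C}\}$, which has probability $1-o(1)$, I can Taylor-expand $f$ to some degree $N=N(k)$. The growth bound \eqref{eq:assum2f} controls the remainder on this event and shows that, for $N$ sufficiently large, the error contribution to $m_k^{(n)}$ is negligible. The analysis therefore reduces to the case of a polynomial activation of fixed degree.

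For polynomial $f$, expanding
$$\Tr M^k=\frac{1}{m^k}\sum_{\mathbf{i},\mathbf{j}}\prod_{s=1}^{k}Y_{i_s j_s}Y_{i_{s+1}j_s}$$
and then expanding each $Y_{ab}$ as a polynomial in $\sum_\ell W_{a\ell}X_{\ell b}$ produces a large sum over labelled bipartite multigraphs with vertex labels in $[n_1]$, $[n_0]$, $[m]$ and weights given by products of moments of the entries of $W$ and $X$. Independence and centering force every $W$- and $X$-edge of a surviving diagram to be traversed an even number of times, and a counting argument in the spirit of Wishart combinatorics shows that only diagrams in which each such edge is traversed \emph{exactly} twice contribute at leading order; consequently only the variances $\sigma_w^2,\sigma_x^2$ enter the limit, which is the universality with respect to $\nu_1,\nu_2$. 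The zero-Gaussian-mean assumption \eqref{eq:assumfgauss} is used here to kill the otherwise-dominant rank-one contribution coming from $\mathds{E}[Y_{ij}]\mathbf{1}\mathbf{1}^{*}$. Passing to the limit in the ratios $n_0/m\to\phi$, $n_0/n_1\to\psi$ yields deterministic values $m_k$ with exponential growth in $k$.

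For the almost-sure upgrade I would establish $\mathrm{Var}(n_1^{-1}\Tr M^k)=O(n_1^{-2})$, either by an Efron--Stein bound applied entry-by-entry in $W$ and $X$ (using \eqref{eq:assumwx} and \eqref{eq:assum2f} to control the per-entry differences of $Y$) or by running the same diagrammatic expansion for $\mathds{E}(\Tr M^k)^2-(\mathds{E}\Tr M^k)^2$ and checking that cross-connected diagrams lose an extra factor $n^{-1}$. Borel--Cantelli then yields convergence of every polynomial moment almost surely, and weak convergence on the compact support follows. The main obstacle I anticipate lies in the combinatorial analysis of the third step: one must describe precisely which labelled graphs survive and with what multiplicity, because the nonlinearity of $f$ creates many more active vertices than in the linear Wishart model, and the Hermite coefficients of $f$ with respect to $\mathcal{N}(0,\sigma_w^2\sigma_x^2)$ must be tracked through the full expansion even though at the end they should reassemble into the Gaussian-equivalent linearisation predicted in \cite{pennington2017nonlinear}.
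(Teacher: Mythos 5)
Your plan is the same in broad strokes as the paper's: compute expected moments for polynomial activations by graph combinatorics, show a variance bound of order $n_1^{-2}$ for Borel--Cantelli, identify the limit via Carleman's condition, and then reduce general analytic $f$ to the polynomial case by truncation. The graph structure you anticipate (only diagrams traversing each $W$- and $X$-edge exactly twice survive, hence universality in $\nu_1,\nu_2$) is exactly what happens, encoded in the paper by admissible/cactus graphs.

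There is, however, a gap in the truncation step as you state it. Writing $N=N(k)$ with ``for $N$ sufficiently large, the error contribution to $m_k^{(n)}$ is negligible'' suggests a degree depending only on the moment order $k$, fixed as $n_1\to\infty$. That cannot work. On the event $\max_{ij}|G_{ij}|\leqslant(\log n_0)^{C}$ the per-entry Taylor remainder is $O\!\bigl(A^{(1+c_f)N}/N!\bigr)$ with $A=(\log n_0)^{C}$, but to control $n_1^{-1}\Tr M^k$ you need an operator- or trace-norm bound on $Y_f-Y_{P_N}$, and a Frobenius estimate gives $\tfrac{1}{\sqrt m}\|Y_f-Y_{P_N}\|_{\mathrm{op}}=O\!\bigl(\sqrt{m}\,A^{(1+c_f)N}/N!\bigr)$. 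For fixed $N$ this diverges as $m\to\infty$; a fixed-degree truncation does not make the moment error negligible. The degree must grow with $n_1$, at a rate roughly $N\sim c_0\log n_1/\log\log n_1$, so that $1/N!$ beats both $\sqrt m$ and the polylogarithmic blow-up from $A^{(1+c_f)N}$. This is not a cosmetic change: once $N$ grows with $n_1$, the diagrammatic moment computation itself must be proved uniformly in a degree that diverges, and the error terms acquire factorial and $(\log n)$-power factors in $N$ that must be explicitly dominated. The paper handles this by proving Theorem~\ref{theo:pol} with degree up to $K=O(\log n_1/\log\log n_1)$ and tracking the $K$-dependence of every combinatorial error term (the bounds $q^3(1+q^{2k})/n_0$ and the like). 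Your outline omits this, and it is where most of the technical work actually lies. The rest of the plan (variance bound via cross-connected diagrams, Carleman via exponential moment growth) is sound and matches the paper.
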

Similarly we denote by $(\tilde \lambda_1,\dots,\tilde \lambda_{n_1}, 0\ldots, 0)$ the eigenvalues of $\frac{1}{m}Y^*Y$ (note that $m-n_1$ such eigenvalues are necessarily null). We set $\tilde{\mu}_m$ its e.e.d. and by $\tilde \mu$ its limit.

The moments of the asymptotic empirical eigenvalue distribution depend on the two following parameters of the function $f$: we set
\begin{equation}\label{eq:theta}
\theta_1(f) = \int f^2(\sigma_w\sigma_x x)\frac{e^{-x^2/2}}{\sqrt{2\pi}}\D x
\quad\text{and}\quad
\theta_2(f)=\left(\sigma_w\sigma_x\int f^\prime(\sigma_w\sigma_x x)\frac{e^{-x^2/2}}{\sqrt{2\pi}}\D x\right)^2.
\end{equation}

We also define the following Stieltjes transforms: let $z\in\mathbb{C}\setminus\mathbb{R}$, we set
\[
\begin{gathered}
G(z):=\int \frac{\D \mu(x)}{x-z}
,\quad\text{ }
\tilde G(z):=\int \frac{\D \tilde \mu(x)}{x-z}\quad \text{ and }
H(z):=\frac{\psi-1}{\psi}-\frac{z}{\psi}G(z).
\end{gathered}
\]

\begin{theorem}\label{theo:result2}
The measure $\mu$ satisfies the following fixed point equation for its Stieljes transform $G$: 
\[
\frac{H(z)}{z}
=
\frac{1}{z}+\frac{G(z)\tilde G(z)(\theta_1(f)-\theta_2(f))}{\psi }+\frac{G(z)\tilde G(z)\theta_2(f)}{\psi -zG(z)\tilde G(z)\theta_2(f)},
\]
with  $\theta_1(f)$ and $\theta_2(f)$ are defined in \eqref{eq:theta}.
\end{theorem}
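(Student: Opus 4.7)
The plan is to prove the equation via the moment method combined with a Hermite-polynomial expansion of $f$ that linearizes the nonlinearity entry by entry. By Theorem \ref{theo:result1} it is enough to compute the moments $m_k := \lim_{n_1 \to \infty} \frac{1}{n_1}\mathds{E}[\Tr M^k]$ and to check that $G(z) = -\sum_{k\geq 0} m_k z^{-k-1}$ satisfies the stated fixed-point equation. The starting observation is that, by the central limit theorem with the quantitative tail control coming from \eqref{eq:assumwx}, the entries $(WX)_{ij}/\sqrt{n_0}$ are asymptotically $\mathcal{N}(0,\sigma_w^2\sigma_x^2)$. Writing
\[
f(\sigma_w\sigma_x y) = \sum_{s\geq 0} a_s h_s(y),
\]
with $h_s$ the Hermite polynomials orthonormal against the standard Gaussian, the centering assumption \eqref{eq:assumfgauss} gives $a_0=0$, while integration by parts identifies $a_1$ with $\sigma_w\sigma_x\mathds{E}[f'(\sigma_w\sigma_x\xi)]$ for $\xi\sim \mathcal{N}(0,1)$, so that $a_1^2 = \theta_2(f)$ and $\sum_{s\geq 2} a_s^2 = \theta_1(f) - \theta_2(f)$. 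The growth bound \eqref{eq:assum2f} is what guarantees that this Hermite expansion can actually be used on the (unbounded) support of $(WX)_{ij}/\sqrt{n_0}$.

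Substituting the Hermite expansion into
\[
\mathds{E}[\Tr M^k] = \frac{1}{m^k}\sum_{\mathbf{i},\mathbf{j}}\mathds{E}\Bigl[\prod_{\ell=1}^{k} Y_{i_\ell j_\ell} Y_{i_{\ell+1} j_\ell}\Bigr],
\]
and applying the diagram (Wick) formula for products of Hermite polynomials of jointly Gaussian variables, the non-vanishing contributions reduce to a sum over pairings of the Hermite indices $s_\ell$. The dominant families are: \emph{(i)} all $s_\ell = 1$, producing a product-Wishart contribution which collapses to traces of powers of $\tfrac{\theta_2}{\sigma_w^2\sigma_x^2 n_0 m}(WX)(WX)^*$; \emph{(ii)} $s_\ell \geq 2$ paired exactly, forcing a diagonal matching of the $W$- and $X$-multi-indices and producing a sample-covariance contribution of effective variance $\sum_{s\geq 2}a_s^2 = \theta_1-\theta_2$. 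Mixed families with some $s_\ell = 1$ and some $s_\ell \geq 2$ vanish by Hermite orthogonality across distinct orders, and all other configurations must be shown to be of lower order by counting free multi-indices: each unmatched index costs a factor of $n_0^{-1/2}$ or $m^{-1/2}$. Universality in the distributions $\nu_1,\nu_2$ beyond their variances falls out automatically, since only the covariance $\sigma_w^2\sigma_x^2$ enters the leading-order diagrammatics.

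The upshot of this moment analysis is that $M$ is spectrally equivalent to the linearized ensemble
\[
M^{\mathrm{lin}} = \frac{1}{m}\!\left(\frac{\sqrt{\theta_2}}{\sigma_w\sigma_x\sqrt{n_0}}WX + \sqrt{\theta_1-\theta_2}\, Z\right)\!\!\left(\frac{\sqrt{\theta_2}}{\sigma_w\sigma_x\sqrt{n_0}}WX + \sqrt{\theta_1-\theta_2}\, Z\right)^{\!*},
\]
where $Z$ is an $n_1\times m$ matrix with independent $\mathcal{N}(0,1)$ entries, independent of $(W,X)$. A standard Schur-complement/resolvent calculation on $M^{\mathrm{lin}}$ (or equivalently a rectangular free additive convolution argument) then yields the advertised fixed-point equation: the additive term $G\tilde G(\theta_1-\theta_2)/\psi$ is the Mar\v{c}enko-Pastur signature of the independent Wishart block of variance $\theta_1-\theta_2$, while the rational piece $G\tilde G\theta_2/(\psi - z G\tilde G\theta_2)$ is the subordination function of a product-Wishart block, matching the Fuss--Catalan fixed-point equation for $(WX)(WX)^*$; the identity $H(z)=(\psi-1)/\psi - zG(z)/\psi$ just reflects the standard duality between the Stieltjes transforms in the different ambient dimensions.

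The main obstacle is the combinatorial control in step (ii): one must verify that every configuration of Hermite indices outside of families (i) and (ii) is indeed of lower order. This requires graph-theoretic bookkeeping of how the $W$- and $X$-multi-indices can be matched inside each Hermite factor, combined with a uniform truncation argument using \eqref{eq:assumwx} to handle the a priori unbounded arguments of the $h_s$, and the derivative bound \eqref{eq:assum2f} to ensure that the Hermite tails converge fast enough in the relevant operator norm. A secondary subtlety is controlling the error in the Gaussian approximation of $(WX)_{ij}/\sqrt{n_0}$ uniformly across all $\Theta(n_1 m)$ entries simultaneously, which is what ultimately drives the universality beyond the Gaussian case of \cite{pennington2017nonlinear}.
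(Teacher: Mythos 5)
Your proposal takes a genuinely different route from the paper. The paper expands $f$ into centered monomials $\tfrac{a_k}{k!}(x^k-k!!\mathds{1}_{k\text{ even}})$ and computes $\overline{m}_q$ directly as a sum over admissible (cactus) graphs, tracking how $\theta_1$ arises from cycles of length $2$ and $\theta_2$ from longer cycles, then extracts the fixed-point equation from the tree recursion encoded by those graphs. You instead expand $f$ in Hermite polynomials, observe $a_1^2=\theta_2$, $\sum_{s\geq 2}a_s^2=\theta_1-\theta_2$, and argue for a ``Gaussian equivalence'' reducing $M$ to the information-plus-noise matrix $M^{\mathrm{lin}}$ built from $WX$ and an independent Gaussian $Z$, after which a Schur-complement or free-convolution calculation produces the equation. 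This is essentially the perspective of the companion paper \cite{PecheECP}, which the paper cites precisely for the statement that $\mu$ is the limiting e.e.d.\ of an information-plus-noise ensemble; it buys a cleaner conceptual picture (the nonlinearity is ``linearized'' with an explicit equivalent model) at the cost of importing known resolvent machinery for deformed Wishart matrices, whereas the paper's combinatorial approach is self-contained and applies the same counting uniformly to establish the moment asymptotics, the weak convergence, and the Stieltjes-transform recursion.

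There is, however, a genuine gap in the diagrammatic step: the claim that ``mixed families with some $s_\ell=1$ and some $s_\ell\geq 2$ vanish by Hermite orthogonality across distinct orders'' is not correct as stated. Orthogonality only forbids pairing a degree-$1$ factor with a degree-$s$ factor \emph{within a single pairing block}; nothing prevents a moment trace from having some pairs of adjacent $Y$-factors carrying Hermite index $1$ and other pairs carrying index $\geq 2$. Indeed those mixed configurations are unavoidable: they are exactly what produce the cross terms $\theta_1^b\theta_2^{q-b}$, $0<b<q$, in $\overline{m}_q$ (cactus graphs containing both $2$-cycles and longer cycles), and correspondingly the moments of $M^{\mathrm{lin}}$ itself contain mixed $(WX)$-block/$Z$-block contributions upon expanding $\Tr\bigl((aWX+bZ)(aWX+bZ)^*\bigr)^q$. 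If the mixed terms truly vanished, $M$ could not be spectrally equivalent to $M^{\mathrm{lin}}$ either. A secondary issue worth flagging: the Hermite diagram (Wick) formula requires \emph{jointly} Gaussian arguments, whereas the entries $(WX)_{ij}/\sqrt{n_0}$ are only marginally asymptotically Gaussian with non-Gaussian joint structure (entries sharing a row or column are dependent but uncorrelated); one would need either a prior universality/Lindeberg step reducing to Gaussian $W,X$, or a quantitative replacement argument, before the diagram formula can be used. With those two points repaired, the plan is sound and aligned with the known ``Gaussian equivalence'' route.
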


\begin{remark} Assumption (\ref{eq:assumfgauss}) is not really needed for this result to hold true: the asymptotic e.e.d. is unchanged if $Y$ is switched by a rank one matrix but this is the correct centering to avoid a very large eigenvalue.
 \end{remark}
In the case where $\theta_2(f)=0$, $\theta_1(f)=\phi=\psi=1$, the limiting measure $\mu$ is the Marcenko--Pastur distribution (with shape parameter 1).
In the general case, the above fourth-order equation admits two pairs of conjugated solutions. In the companion article \cite{PecheECP}  (see Theorem 1.4), it is shown that  $\mu$ is actually the limiting e.e.d. of an information plus noise sample covariance matrix.


\begin{figure}[!ht]
	\centering
	\begin{subfigure}{.16\linewidth}
	\end{subfigure}
	\begin{subfigure}{.32\linewidth}
		\includegraphics[width=\linewidth]{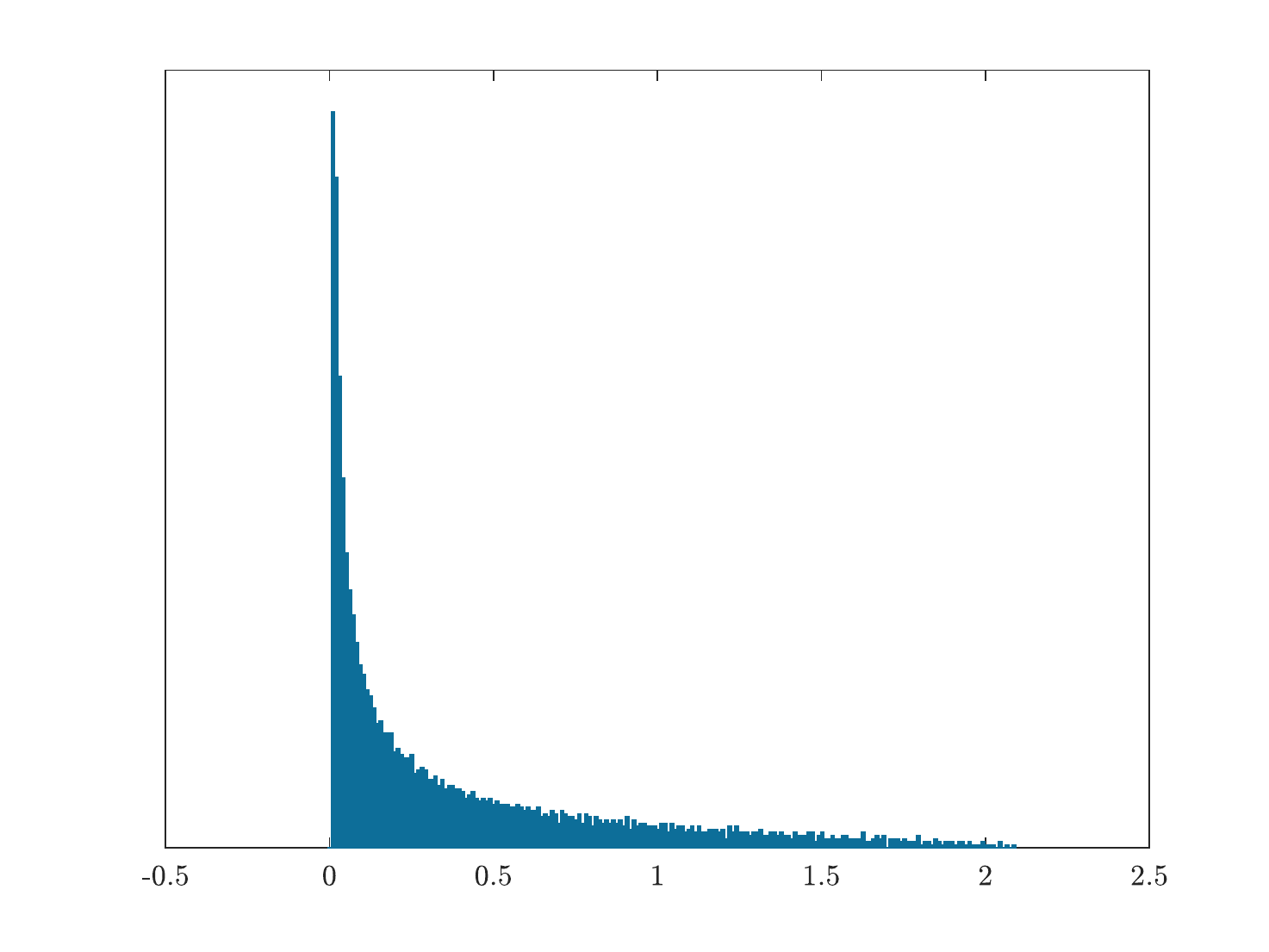}
		\caption{$f(x)\hspace{-.2em}=\hspace{-.2em}\tanh(x)$}
	\end{subfigure}
	\begin{subfigure}{.32\linewidth}
		\includegraphics[width=\linewidth]{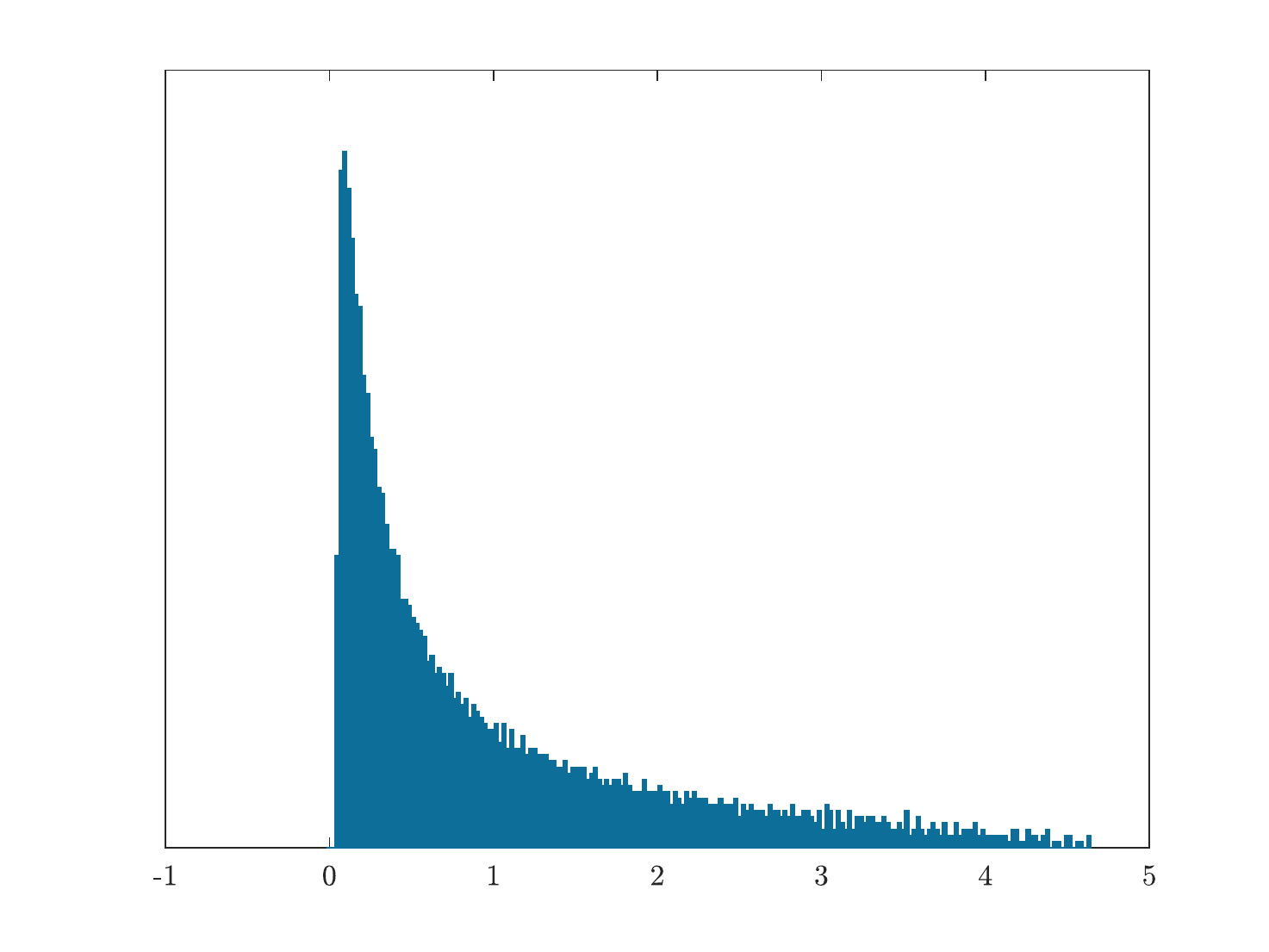}
		\caption{\small{$f(x)\hspace{-.2em}=\hspace{-.2em}\max(x,0)$}}
	\end{subfigure}
	\begin{subfigure}{.16\linewidth}
	\end{subfigure}\\
	\begin{subfigure}{.16\linewidth}
	\end{subfigure}
	\begin{subfigure}{.32\linewidth}
		\includegraphics[width=\linewidth]{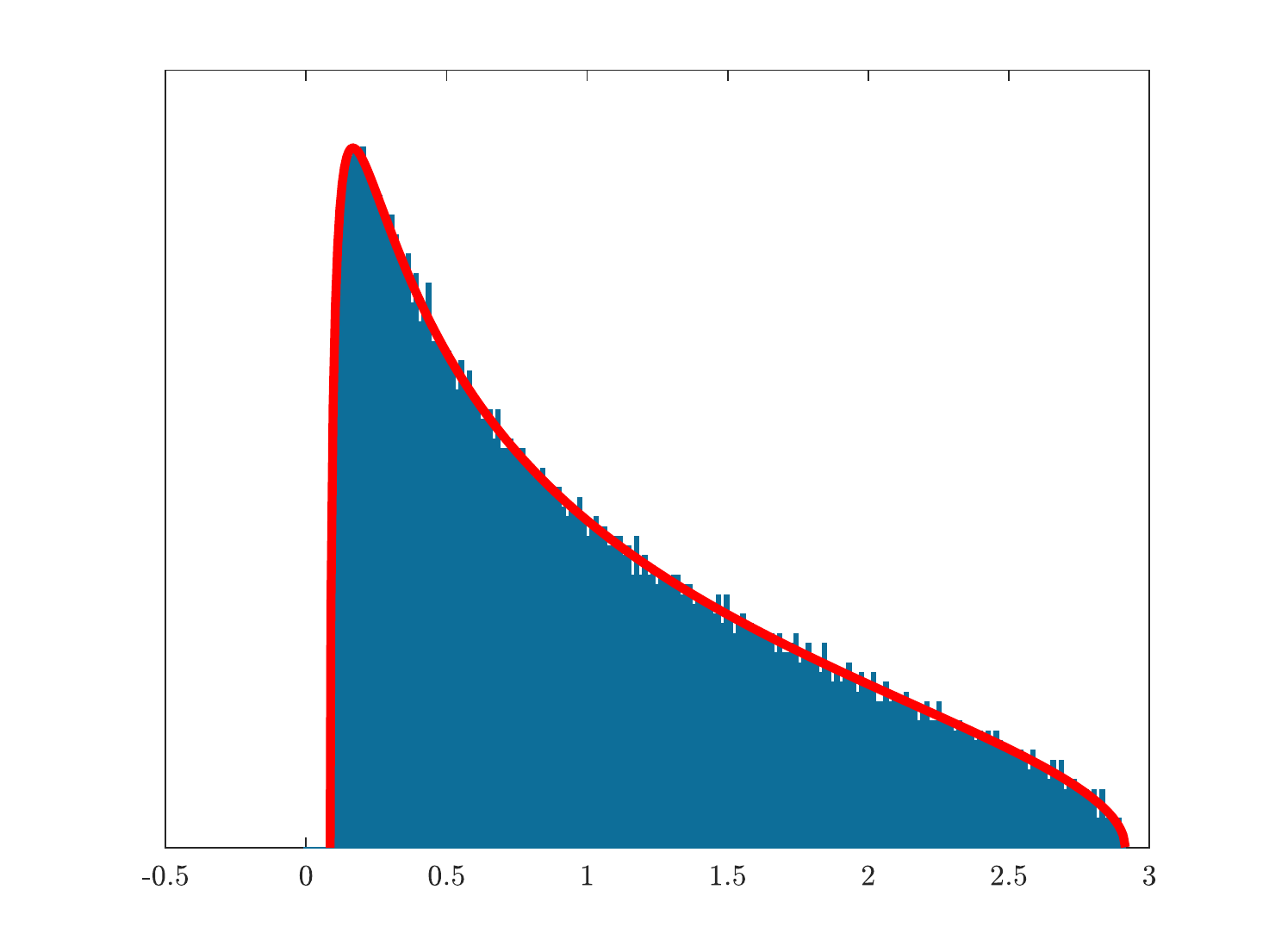}
		\caption{$f(x)\hspace{-.2em}=\hspace{-.2em}\cos(x)$}
	\end{subfigure}
	\begin{subfigure}{.32\linewidth}
		\includegraphics[width=\linewidth]{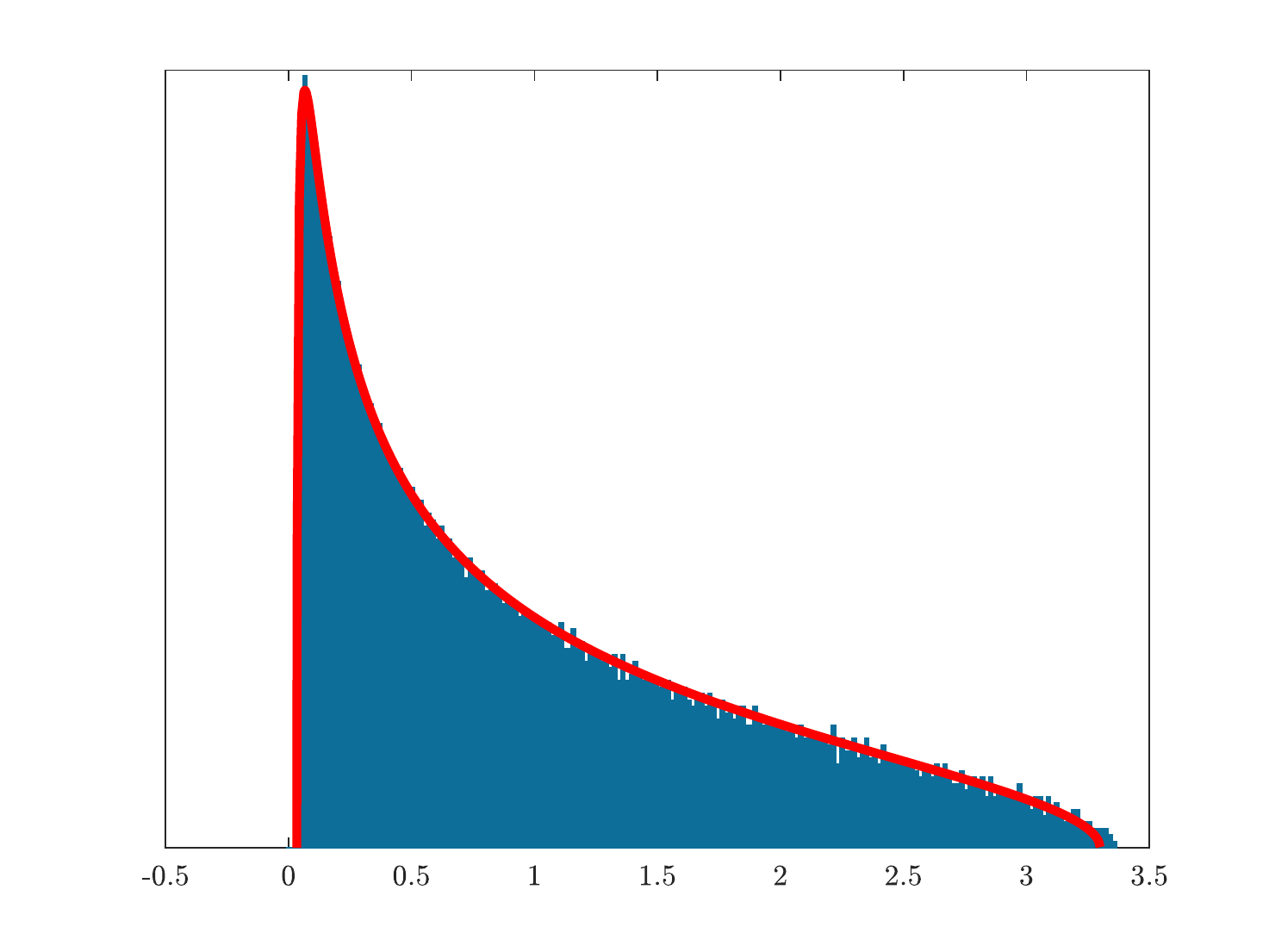}
		\caption{$f(x)\hspace{-.2em}=\hspace{-.2em}x^3-3x$}
	\end{subfigure}
	\begin{subfigure}{.16\linewidth}
	\end{subfigure}
	\caption{\small{Eigenvalues of $M$ for different activation functions. Note that every function displayed here is actually scaled so that $\theta_1(f)=1$ and centered so that there is no very large eigenvalue. For the two bottom figures, we have $\theta_2(f)=0$ and the Marcenko--Pastur of shape parameter $\phi/\psi$ density is plotted in red.}}
\end{figure}



\begin{remark}
Our proof is based on a method of moments as in \cite{Montanari}  to recover the self-consistent equation for the Stieltjes transform. Our analysis is actually strong enough to obtain convergence of the largest eigenvalue to the edge of the support when the function $f$ is odd by considering moments of order larger than $\log n_1$. The behavior of the largest eigenvalue in the general case is the object of a forthcoming article. 
\end{remark}

The model given by \eqref{eq:defM} consists in passing the input data through one layer of a neural network as we apply the function $f$ a single time. However, we could reinsert the output data through the network again, thus multiplying layers. It was conjectured in \cite{pennington2017nonlinear} that for activation functions such that $\theta_2(f)=0$ the limiting e.e.d. is invariant and given by the Mar\v{c}enko--Pastur distribution at each layer. We prove this statement in Theorem \ref{theo:marclayer} below. 
We denote by $L$ the number of layers  and consider, for $p\in[\![0,L-1]\!]$ a family of independent matrices $W^{(p)}\in \mathbb{R}^{n_{p+1}\times n_{p}}$ where $(n_p)_p$ is a family of growing sequences of integers such that there exists $(\phi_p)_p$ and $(\psi_p)_p$ such that 
\[
\frac{n_{0}}{m}\xrightarrow[m\rightarrow\infty]{}\phi\quad \frac{n_{p}}{n_{p+1}}\xrightarrow[m\rightarrow\infty]{}\psi_p.
\]
We suppose that all the matrix entries $(W_{ij}^{(p)})_{ij}, 1\leq i\leq n_{p+1}, 1\leq j \leq n_{p}$, $p=0, \ldots, L-1$ are $i.i.d$ with variance $\sigma_{w}^2$. Consider also $X\in\mathbb{R}^{n_0\times m}$ with $i.i.d$ entries of variance $\sigma_x^2$ and define the sequence of random matrices
\begin{equation}\label{def:yell}
Y^{(p+1)}=f\left(
	\frac{\sigma_x}{\sqrt{\theta_1(f)}}
	\frac{W^{(p)}Y^{(p)}}{\sqrt{n_p}}
\right)\in\mathbb{R}^{n_{p+1}\times m}
\quad\text{with}\quad
Y^{(0)}=X.
\end{equation} 
The scaling is here chosen to normalize the variance of the entries of $Y^{(p)}$ at every layer. This normalization is known (adding centering) as batch normalization and is proved to improve the training speed \cite{ioffe2015batch}. The centering  \eqref{eq:assumfgauss} is only important here. Now, one can define 
\[
M^{(L)}=\frac{1}{m}Y^{(L)}Y^{(L)*}\quad\text{and}\quad \mu_{n_L}^{(L)}=\frac{1}{n_L}\sum_{i=1}^{n_L}\delta_{\lambda_i^{(L)}},
\]
where $(\lambda_k^{(L)})$ are the eigenvalues of $M^{(L)}.$ We then prove the following theorem under the additional assumption that the function $f$ is bounded.

\begin{theorem}\label{theo:marclayer} Let $L$ be a given integer.
	Suppose that $f$ is a bounded analytic function such that \eqref{eq:assumfgauss} and \eqref{eq:assum2f} hold. In the case where $\theta_2(f)=0,$ then the asymptotic e.e.d. $\mu_{n_L }^{(L)}$ is given almost surely by the Mar\v{c}enko-Pastur distribution of shape parameter $\frac{\phi}{\psi_0\psi_1\cdots\psi_{L-1}}$.
\end{theorem}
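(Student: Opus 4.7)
The plan is to argue by induction on the number of layers $L$. The base case $L=1$ is a specialization of Theorem \ref{theo:result2}: setting $\theta_2(f)=0$ in the fixed-point equation leaves
\[
\frac{H(z)}{z}=\frac{1}{z}+\frac{\theta_1(f)\,G(z)\tilde G(z)}{\psi_0}.
\]
Substituting $H(z)=(\psi_0-1)/\psi_0 - zG(z)/\psi_0$ together with the deterministic identity $\tilde G(z)=(\phi/\psi_0)G(z)-(1-\phi/\psi_0)/z$ (which comes from $m\tilde G_m(z)=n_1 G_{n_1}(z)-(m-n_1)/z$ in the limit) rearranges the equation into
\[
\theta_1(f)\,\frac{\phi}{\psi_0}\,zG(z)^2 + \left(z+\theta_1(f)\left(\frac{\phi}{\psi_0}-1\right)\right)G(z)+1=0.
\]
The normalization chosen in \eqref{def:yell} absorbs the factor $\theta_1(f)$ into a rescaling of the eigenvalues of $M^{(1)}$, so the resulting equation is exactly the Mar\v{c}enko--Pastur fixed-point equation of shape $\phi/\psi_0$.

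For the inductive step, assume $\mu_{n_p}^{(p)}$ converges almost surely to the Mar\v{c}enko--Pastur distribution of shape $\phi/(\psi_0\cdots\psi_{p-1})$, and apply the method of moments to compute $\frac{1}{n_{p+1}}\mathds{E}\,\Tr\bigl(M^{(p+1)}\bigr)^k$ for each fixed $k$. Condition on $Y^{(p)}$: since $W^{(p)}$ is independent of $Y^{(p)}$ with i.i.d.\ sub-Gaussian entries, the CLT yields that each entry of $W^{(p)}Y^{(p)}/\sqrt{n_p}$ is asymptotically Gaussian with variance $\sigma_w^2\sigma_x^2$. Expand $f$ in a Hermite basis with respect to $N(0,\sigma_w^2\sigma_x^2)$: the zeroth coefficient vanishes by \eqref{eq:assumfgauss}, and by Stein's lemma the first coefficient equals $\sigma_w\sigma_x\,\mathds{E}[f^\prime(\sigma_w\sigma_x Z)]=\sqrt{\theta_2(f)}=0$. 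Thus $f$ has neither a constant nor a linear Hermite component. In the combinatorial expansion of $\mathds{E}\,\Tr(M^{(p+1)})^k$ as a sum over closed walks, the vanishing of these two components forces every surviving walk to consist of purely pairwise quadratic contractions of the entries of $Y^{(p+1)}$, so each contribution depends on $Y^{(p)}$ only through traces of powers of $Y^{(p)}Y^{(p)*}/m$, which by the inductive hypothesis converge to the corresponding Mar\v{c}enko--Pastur moments of shape $\phi/(\psi_0\cdots\psi_{p-1})$. Summing over non-crossing pair partitions then produces the Mar\v{c}enko--Pastur moments of shape $\phi/(\psi_0\cdots\psi_p)$.

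The main obstacle is to justify this moment analysis rigorously when the entries of $Y^{(p)}$ are not i.i.d.\ but correlated across both rows and columns through the shared matrices $W^{(0)},\dots,W^{(p-1)}$. Theorem \ref{theo:result2} cannot be invoked as a black box with $Y^{(p)}$ in place of $X$; one must re-run the moment argument with genuinely non-i.i.d.\ input, controlling uniformly the CLT for the entries of $W^{(p)}Y^{(p)}/\sqrt{n_p}$ and showing that higher-order Hermite contributions are $o(1)$ after normalization. The boundedness of $f$ assumed in Theorem \ref{theo:marclayer} (but absent from the one-layer case) is precisely what permits safe truncation of the Hermite series and exchange of limits, while the sub-Gaussian tails \eqref{eq:assumwx} yield the Hanson-Wright-type concentration needed to propagate almost-sure convergence from layer $p$ to layer $p+1$. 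Since $L$ is fixed, iterating this step finitely many times concludes the proof.
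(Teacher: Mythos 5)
Your base-case algebra for Theorem \ref{theo:result2} with $\theta_2(f)=0$ is correct, and the identity $\tilde G(z)=(\phi/\psi_0)G(z)-(1-\phi/\psi_0)/z$ is used properly. But the inductive step has a genuine gap that you yourself name and then do not close, and that gap is the entire difficulty of the theorem.

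Your plan conditions on $Y^{(p)}$ and treats the entries of $W^{(p)}Y^{(p)}/\sqrt{n_p}$ as approximately Gaussian, then invokes a Hermite decomposition and claims that because the zeroth and first coefficients vanish, the surviving terms in $\mathds{E}\Tr(M^{(p+1)})^k$ reduce to non-crossing pairings weighted by normalized traces of $(Y^{(p)}Y^{(p)*}/m)^j$, to which the inductive hypothesis applies. This is precisely the step that cannot be taken for free. First, the conditional CLT gives marginal (or finite-dimensional) Gaussianity, but the moment expansion of $\Tr(M^{(p+1)})^k$ involves joint correlations across $O(n_{p+1}m)$ entries and sums over long index chains; the off-diagonal entries of $Y^{(p)}Y^{(p)*}/n_p$ are each $O(n_p^{-1/2})$ and their cumulative effect inside the walk sums must be controlled, not ignored. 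Second, and more importantly, the inductive hypothesis gives you convergence of the e.e.d.\ of $M^{(p)}$, hence of normalized traces $\frac{1}{n_p}\Tr(M^{(p)})^j$; but the graph expansion of $\mathds{E}\Tr(M^{(p+1)})^k$ does not factor through such traces alone. The contributing configurations are trees of cycles (cactus graphs), and the weight attached to each is a product over the tree structure, not a simple power of a single trace. Showing that these cactus contributions nevertheless combine into the Mar\v{c}enko--Pastur moments for $M^{(p+1)}$ is equivalent to establishing a form of asymptotic freeness between $W^{(p)}$ and $Y^{(p)}$ at the level of the nonlinearity $f$, and the boundedness of $f$ plus a Hanson--Wright bound do not by themselves deliver that. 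Asserting that "this can be justified" is where a proof would have to begin, not end.

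The paper sidesteps this entirely by never conditioning on intermediate layers. Instead it expands $Y^{(L)}$ recursively all the way down to the genuinely i.i.d.\ entries of $X$ and $W^{(0)},\ldots,W^{(L-1)}$, producing a multi-level graph structure (Lemma \ref{lem:leadmatch}, Theorem \ref{theo:resultmomentell}) in which the matching at one layer induces a decorated graph at the next, which must again be admissible. The vanishing of $\theta_2(f)$ then forces all surviving cactus graphs to consist solely of cycles of length two, and Lemma \ref{lem:momentmarcenko} (a Narayana-number count) identifies the resulting moment formula as Mar\v{c}enko--Pastur. The boundedness of $f$ is used, but only in the final polynomial-approximation step, to control the spectral radius of the error matrix $\mathcal{E}$ uniformly across layers, not to truncate a Hermite series or to justify a conditional CLT. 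If you want to pursue the inductive route, the honest version requires either a multilayer local-law/deterministic-equivalent argument for $M^{(p+1)}$ with non-i.i.d.\ input $Y^{(p)}$ (in the spirit of \cite{louart2018random} but for the random-$X$, moment-method setting), or a re-derivation of the combinatorics at every layer, which is exactly what the paper does.
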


In particular the above result implies that the range of the spectrum of the matrix $\tilde{ M^{(L)}}=\frac{1}{n_L}{Y^{(L)}}^*Y^{(L)}$ is less and less spread as $L$ grows. This may be of importance when one has to determine the parameter $\gamma$ so as to minimize the loss in the testing phase.
 Unfortunately our result does not encompass the case of a number of layers also growing to infinity.

\begin{remark}
	The model we present here for several layers can be thought as a theoretical toy model since in practicality, weights would be updated along the neural network (using gradient descent for instance) and the independence assumption on the weights would not be true. However, the correlation induced by updating the weights does not seem to be tractable yet on a random matrix theory point of view. 
\end{remark}

\paragraph{}The next section is dedicated to proving Theorem \ref{theo:result1} for polynomial activation functions using the moment method. Our choice is motivated by the fact that \cite{pennington2017nonlinear} introduce a family of graphs to describe the asymptotic e.e.d. of Gaussian non linear random matrices, which we want to understand in greater generality. Thus the main part of the article has some combinatorial aspects and we believe that this point of view can give some insights in the study of these matrix models since it relates analytic objects such as $\theta_1$ or $\theta_2$ to combinatorial ones. Our approach is very similar to that of \cite{Montanari} and the results quite similar.
In Section \ref{sec:polyapprox}, we generalize the result  to other functions by using a polynomial approximation . 
 Finally, in Section \ref{sec:multilayer} we first give a combinatorial description of the multilayer case for polynomials and then prove Theorem \ref{theo:marclayer}. 
\section{Limiting e.e.d. when \texorpdfstring{$\bm{f}$}{} is a polynomial}\label{sec:momentmethod}

The point of this section is to compute the moments of the empirical eigenvalue distribution of the matrix $M$ when the activation is a polynomial. The following statement gives the expected moment of the distribution in this case using a graph enumeration.
Before stating the result, we need the following definition. 
\begin{definition}\label{def:graph} Let $q\geq 1$ be a given integer.
A coincidence graph  is a connected graph built up from the simple (bipartite) cycle  of  vertices labeled $i_1, j_1, i_2, \ldots, i_q, j_q$ (in order) by  identifying some $i$-indices respectively and $j$-indices respectively. Such a graph is admissible if the formed cycles are joined to another by at most a common vertex and each edge belongs to a unique cycle.
\end{definition}

\begin{remark}In the following the edges and vertices of such an admissible graph are colored red.
\end{remark}
\begin{remark}
An admissible graph has $2q$ edges. It can also be seen as a tree of cycles (simply replacing cycles by edges) also called a cactus graph. These graphs appear also in random matrix theory in the so-called theory of traffics when expanding injective traces (see \cite{cebron2016universal} e.g.).
\end{remark}

The basic admissible graph is given by the simple cycle (left figure on Figure \ref{fig:tree}) whose associated tree is a simple edge. The two right figures show a tree and one admissible graph that is associated to the tree: note that the points $i_1$ and $j_1$ where cycles are glued to each other are not determined by the tree, neither the lengths of the cycles.
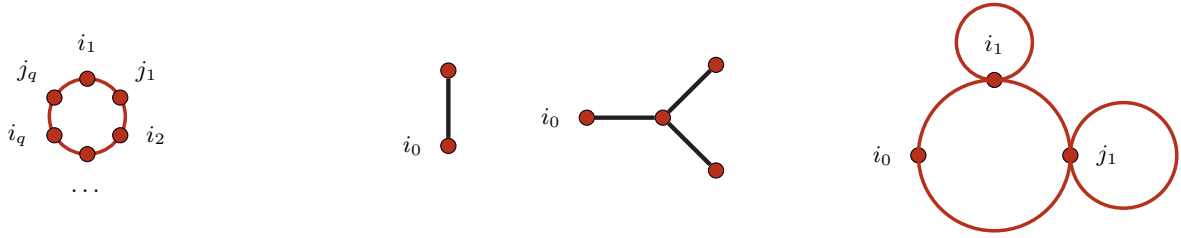
\begin{figure}[ht!]{\centering
\begin{minipage}[c]{.3\linewidth}
\begin{tikzpicture}
	\draw[BrickRed, line width = .12em] (0,0) circle (.5cm) ;
	\node[fill=BrickRed, label=$i_{1}$, inner sep = 0pt, minimum size=.2cm] (1) at (360/6+30: .5cm) {};
	\node[fill=BrickRed, label=135:$j_{q}$, inner sep = 0pt, minimum size=.2cm] (2) at (2*360/6+30: .5cm) {};
	\node[fill=BrickRed, label=left:$i_q$, inner sep = 0pt, minimum size=.2cm] (3) at (3*360/6+30: .5cm)  {};
	\node[fill=BrickRed, label=below:$\dots$, inner sep = 0pt, minimum size=.2cm] (4) at (4*360/6+30: .5cm) {};
	\node[fill=BrickRed, label=right:$i_{2}$, inner sep = 0pt, minimum size=.2cm] (5) at (5*360/6+30: .5cm) {};
	\node[fill=BrickRed, label=45:$j_{1}$, inner sep = 0pt, minimum size=.2cm] (6) at (6*360/6+30: .5cm) {};
\end{tikzpicture}
\end{minipage}
\begin{minipage}[c]{.1\linewidth}
\begin{tikzpicture}
\node[fill=BrickRed,label=left:$i_0$, inner sep = 0pt, minimum size = .2cm] (01) at (0,0) {};
\node[fill=BrickRed, inner sep = 0pt, minimum size = .2cm] (02) at (0,1) {};
\draw[-,Black, line width=.15em] (01) edge (02);
\end{tikzpicture}
\end{minipage}
\begin{minipage}[c]{.25\linewidth}
	\begin{tikzpicture}

			\node[fill=BrickRed,label=left:$i_0$, inner sep = 0pt, minimum size = .2cm] (01) at (0,0) {};
			\node[fill=BrickRed, inner sep = 0pt, minimum size = .2cm] (02) at (1,0) {};
			\node[fill=BrickRed, inner sep = 0pt, minimum size = .2cm] (03) at (1.7,.7) {};
			\node[fill=BrickRed, inner sep = 0pt, minimum size = .2cm] (04) at (1.7,-.7) {};

		\draw[-,Black, line width=.15em] (01) edge (02);
		\draw[-,Black, line width=.15em] (02) edge (03);
		\draw[-,Black, line width=.15em] (02) edge (04);
		\end{tikzpicture}
		\end{minipage}
\begin{minipage}[c]{.3\linewidth}
\begin{tikzpicture}
			\node[fill=BrickRed, label=left:$i_0$, inner sep = 0pt, minimum size = .2cm] (001) at (4,0) {};
			\node[fill=BrickRed, label=right:$j_1$, inner sep = 0pt, minimum size = .2cm] (002) at (6,0) {};
			\node[fill=BrickRed,label=above:$i_1$, inner sep = 0pt, minimum size = .2cm] (003) at (5,1) {};
		\draw[-,BrickRed, line width=.12em] (5,0) circle (1);
		\draw[-,BrickRed, line width=.12em] (6.7,0) circle (0.7);
		\draw[-,BrickRed, line width=.12em]  (5,1.5) circle (0.5);

	\end{tikzpicture}
	
\end{minipage}
}\caption{Two admissible graphs and their associated trees.}\label{fig:tree}\vspace*{-0.5cm}
\end{figure}

\begin{definition} \label{defA}
$\mathcal{A}(q,I_i,I_j,b)$ is the number of admissible graphs with $2q$ edges, $I_i$ $i$-identifications, $I_j$ $j$-identifications and  with exactly $b$ cycles of size 2.
\end{definition}

We can now state the following Theorem. Let $\theta_1$ and $\theta_2$ are defined in \eqref{eq:theta}.
\begin{theorem}\label{theo:pol}
Let $f=\sum_{k=1}^K \frac{a_k}{k!}(x^k-k!!\mathds{1}_{k\text{ even}})$ be a polynomial such that \eqref{eq:assumfgauss} holds. The degree of $f$, $K$, can grow with $n_1$ but we suppose that 
\begin{equation}\label{eq:degreepoly}
K=\mathcal{O}\left(
	\frac{\log n_1}{\log \log n_1}
\right).
\end{equation}
Let $\mu_{n_1}^{(f)}$ be defined in \eqref{eq:empirical} and its expected moments
$\overline{m}_q:=\mathds{E}\left[
	\scp{\mu_{n_1}^{(f)}}{x^q}
\right]
=
\mathds{E}\left[\frac{1}{n_1}\sum_{i=1}^{n_1}\lambda_i^{q}\right].$
We then have the following asymptotics
\begin{equation}\label{eq:resultmoment}
\overline{m}_q=
\sum_{I_i,I_j=0}^q\sum_{b=0}^{I_i+I_j+1}{\mathcal{A}}(q,I_i,I_j,b)\theta_1(f)^b\theta_2(f)^{q-b}\psi^{I_i+1-q}\phi^{I_j}
\left(
1+o(1)
\right).
\end{equation}
\end{theorem}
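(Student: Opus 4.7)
The plan starts from the trace identity
\begin{equation*}
\overline{m}_q
=
\frac{1}{n_1 m^q}
\sum_{i_1,\dots,i_q,\,j_1,\dots,j_q}
\mathbb{E}\bigl[Y_{i_1j_1}Y_{i_2j_1}Y_{i_2j_2}Y_{i_3j_2}\cdots Y_{i_qj_q}Y_{i_1j_q}\bigr],
\end{equation*}
whose $2q$ factors run along the bipartite cycle of Definition \ref{def:graph}. I would first organize the sum by the coincidence pattern of the $i$-indices and the $j$-indices: each coincidence graph $G$ with $q-I_i$ distinct $i$-vertices and $q-I_j$ distinct $j$-vertices, carrying the $2q$ edges of the cycle in their original cyclic order, receives $n_1^{q-I_i}m^{q-I_j}$ labellings. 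Combined with the prefactor $n_1^{-1}m^{-q}$, this already produces the factor $\psi^{I_i+1-q}\phi^{I_j}$ announced in \eqref{eq:resultmoment}, provided the residual $n_0$-powers coming from the expansion of the $Y$-entries cancel---which they will, exactly for admissible $G$.

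To compute the expectation attached to a fixed $G$, I would write $Y_{ij}=f(Z_{ij})$ with $Z_{ij}=n_0^{-1/2}\sum_\ell W_{i\ell}X_{\ell j}$ and expand $f$ in the Hermite basis adapted to the limit law $\mathcal{N}(0,\sigma_w^2\sigma_x^2)$ of $Z_{ij}$ (the constant Hermite coefficient vanishes thanks to \eqref{eq:assumfgauss}). The expectation decomposes into Wick-type contractions of the $W$- and $X$-letters obtained by unfolding each Hermite polynomial into a monomial in $Z_s$. A scaling count shows that the dominant contractions are the \emph{minimal} ones: at each edge of $G$ the Hermite degree should be the smallest giving a nonzero pairing, and the $W$- and $X$-pairings should travel along the cycles of $G$ using a single shared intermediate index per cycle of length at least $4$. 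Any extra pairing, chord, or two cycles sharing an edge forces one to pay an unmatched factor of $n_0^{-1}$, which is exactly the quantitative content of admissibility. The sub-Gaussian assumption \eqref{eq:assumwx} enters at this stage to justify replacing non-Gaussian cumulants of the entries of $W$, $X$ by their Gaussian counterparts at leading order.

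On an admissible $G$ the expectation factorizes over cycles because distinct cycles meet only at vertices. A length-$2$ cycle (double edge $(i,j)$) contributes $\mathbb{E}[f(Z_{ij})^2]\to\theta_1(f)$ by Parseval on the Hermite side combined with the CLT for $Z_{ij}$; a length-$2\ell$ cycle with $\ell\ge 2$ contributes $\hat f_1^{\,2\ell}$, the $2\ell$-th power of the first Hermite coefficient, times the single $n_0$-factor coming from the summation over the shared intermediate index. Stein's identity
\begin{equation*}
\hat f_1 = \int f(\sigma_w\sigma_x x)\,x\,\frac{e^{-x^2/2}}{\sqrt{2\pi}}\,\D x = \sigma_w\sigma_x\int f'(\sigma_w\sigma_x x)\,\frac{e^{-x^2/2}}{\sqrt{2\pi}}\,\D x = \theta_2(f)^{1/2}
\end{equation*}
then collapses the long-cycle contribution to $\theta_2(f)^\ell$. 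Multiplying across cycles yields $\theta_1(f)^b\,\theta_2(f)^{q-b}$ and summing over admissible graphs with the prescribed numbers of identifications produces exactly $\mathcal{A}(q,I_i,I_j,b)$; combined with the prefactors identified above one recovers \eqref{eq:resultmoment}.

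The main obstacle will be the power-counting argument of the second step, where admissibility must be derived rather than assumed: one has to enumerate all possible $W$/$X$-pairings and check that every deviation from the cactus structure (an edge shared between two cycles, a chord within a cycle, a non-minimal Hermite degree at an edge of a long cycle, or using a non-Gaussian cumulant of the entries) costs at least one unmatched factor of $n_0^{-1/2}$. A secondary difficulty is the growing polynomial degree $K=O(\log n_1/\log\log n_1)$: the number of Hermite monomials grows combinatorially, so one needs uniform combinatorial estimates combined with the sub-Gaussian tails \eqref{eq:assumwx} (to truncate rare large entries of $W$ and $X$) in order to control the error uniformly in $q$ and absorb it into the $1+o(1)$ factor.
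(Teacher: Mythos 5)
Your proposal sets up the same moment expansion as the paper and reaches the same organizing structure (coincidence graphs, cactus/admissible graphs), but it replaces the paper's expansion of $f$ into \emph{centered monomials} $(x^k-k!!\mathds{1}_{k\,\mathrm{even}})/k!$ by an expansion in the Hermite basis adapted to $\mathcal{N}(0,\sigma_w^2\sigma_x^2)$. That is a genuinely different route, and it buys you something real: the constants $\theta_1,\theta_2$ fall out immediately from Parseval and Stein as $\theta_1(f)=\sum_k\hat f_k^{\,2}$ and $\theta_2(f)=\hat f_1^{\,2}$, so the factorization $\theta_1^b\theta_2^{q-b}$ over cycles of an admissible graph is transparent before doing any combinatorics. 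The paper, by contrast, never invokes Hermite orthogonality; it computes the contribution of each monomial degree by directly counting matchings of the blue vertices in each niche (perfect matchings give $(k-1)!!$-type factors, cross-terms between two monomial degrees $k_1,k_2$ over a length-$2$ cycle give $(k_1+k_2)!!-k_1!!k_2!!$), and then identifies the resulting sums post hoc with $\theta_1$ and $\theta_2$. The monomial route is more pedestrian but makes the error analysis mechanical: each niche carries exactly $k$ blue vertices, and deviations from the leading matching are compared term-by-term (cases $a)$--$d)$ and Lemma \ref{NAneg}).

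Two caveats on your plan. First, you correctly flag the power-counting step (deriving that non-cactus configurations, extra pairings, chords, non-minimal Hermite degrees, and non-Gaussian cumulants all cost a factor $n_0^{-1}$) as the main obstacle; this is where the paper spends almost all of its effort, with separate estimates for each defect type and a surplus argument for non-admissible graphs, and none of it becomes easier in the Hermite basis — you still have to unfold each Hermite polynomial into $W$/$X$-letters and enumerate contractions, so the combinatorics are at least as heavy as with monomials. Second, the approximate orthogonality $\mathbb{E}[H_k(Z_{ij}/\sigma_z)H_\ell(Z_{ij}/\sigma_z)]\approx k!\,\delta_{k\ell}$ that you use for length-$2$ cycles is only asymptotic: for $k+\ell$ even the cross term gives exactly as many distinct $\ell$-indices as the diagonal ($ (k+\ell)/2$), so it is \emph{not} subleading by a power of $n_0$; what makes the off-diagonal terms disappear is the near-Gaussianity of $Z_{ij}$, and with $K=\mathcal O(\log n_1/\log\log n_1)$ growing you need the same quantitative moment-CLT bounds (sub-Gaussian tails controlling $\mathds{E}\vert W\vert^k,\mathds{E}\vert X\vert^k\lesssim C^kk^{k/\alpha}$) that the paper invokes after its bounded-entry truncation. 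So the Hermite reorganization cleans up the identification of the limit, but it does not shortcut the quantitative combinatorial core, and you should not lean on Parseval/Stein as if they held at finite $n_0$.
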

Note that in this theorem we allow the degree $K$ of the polynomial to grow with $n_1$ as in \eqref{eq:degreepoly} but the theorem holds true for any fixed integer $q$ (independent of $n$).
 It is possible to improve the assumption \eqref{eq:degreepoly} in the sense that $K$ could grow faster with $n_1$. However, this bound is enough for the polynomial approximation we need later (using a Taylor approximation of the function $f$). 
 The proof of the above Theorem relies on combinatorial arguments we now develop.
\subsection{Proof of Theorem \ref{theo:pol} when \texorpdfstring{$\bm{f}$}{} is a monomial of odd degree:}
We first consider the case where $f(x)=\frac{x^k}{k!}$ for an odd integer $k$. 
We first assume that the entries of $W$ and $X$ are bounded in the following sense: there exists a $A>0$ such that 
\[
\max_{ij}\vert W_{ij}\vert+\vert X_{ij}\vert\leqslant A\quad\text{almost surely}.
\]
\subsubsection{Basic definitions}
For this activation function, the entries of $Y=f(WX/\sqrt{n_0})$ are of the form
\begin{equation}\label{eq:entryy}
Y_{ij}=\frac{1}{k!}\left(\frac{WX}{\sqrt{n_0}}\right)^k_{ij}=\frac{1}{n_0^{k/2}k!}\left(
	\sum_{\ell=1}^{n_0}W_{ik}X_{kj}
\right)^k
=
\frac{1}{n_0^{k/2}k!}\sum_{\ell_1,\dots\ell_k=1}^{n_0}\prod_{p=1}^kW_{i\ell_p}X_{\ell_p j}.
\end{equation} 
We want to study the normalized tracial moments of the matrix $M$. Thus we want to consider, for a positive integer  $q$, 
\begin{equation}\label{eq:moment0}
\frac{1}{n_1}\mathds{E}\left[
	\Tr M^q
\right]
=
\frac{1}{n_1m^q}\mathds{E}\left[
	\Tr \left(YY^*\right)^q
\right]
=
\frac{1}{n_1m^q}\mathds{E}\sum_{i_1,\dots,i_q=1}^{n_1}\sum_{j_1,\dots,j_q=1}^mY_{i_1j_1}Y_{i_2j_1}Y_{i_2j_2}Y_{i_3j_2}\dots Y_{i_qj_q}Y_{i_1j_q}.
\end{equation}
We first encode each of the summand in (\ref{eq:moment0}) as a coincidence graph (not necessarily admissible) by simply marking the coinciding indices in the summand.
Then injecting \eqref{eq:entryy} in the previous equation we obtain the following development 
\begin{equation}\label{eq:moment1}
\frac{1}{n_1}\mathds{E}\left[\Tr M^q\right]
=
\frac{1}{n_1m^qn_0^{kq}(k!)^{2q}}
\mathds{E}
\sum_{i_1,\dots,i_q}^{n_1}
\sum_{j_1,\dots,j_q}^{m}
\sum_{\substack{\ell_1^1,\dots\ell^1_k\\\dots\\\ell^{2q}_1\dots\ell^{2q}_k}}^{n_0}
\prod_{p=1}^kW_{i_1\ell^1_p}X_{\ell_p^1j_1}
\prod_{p=1}^kW_{i_2\ell^2_p}X_{\ell_p^2j_1}
\dots
\prod_{p=1}^kW_{i_1\ell^{2q}_p}X_{\ell_p^{2q}j_q}
\end{equation}
To take the $l-$indices into account, we now add to the red graph $2kq$ blue vertices. We can represent the vertices in a graph such as in Figure \ref{fig:idistinct}.  
We call a red edge a \emph{niche}. Each \emph{niche} is decorated by $k$ \emph{blue} vertices from which leave blue edges corresponding to a term $W_{i\ell}X_{\ell j}$ in \eqref{eq:moment1}. Since the $W_{i\ell}$ and $X_{\ell j}$ are centered and independent, each such entry has to arise at least twice in the summand in equation \eqref{eq:moment1}. 
Thus, to compute the spectral moment, one needs to match the blue edges so that each entry arises with multiplicity greater than 2. The matching of $\ell$ indices in \eqref{eq:moment1} corresponds to a matching of the \emph{blue} vertices.
Then, the main contribution shall come from those summands maximizing the number of pairwise distinct indices.

\subsubsection{The simplest admissible graph: a cycle of length \texorpdfstring{$2q$}{}}\label{subsec:oddcycle}
In this subsection, we assume that the $i$ and $j$ indices are pairwise distinct and consider the associated contribution to the spectral moment, which we denote by 
$\mathbf{E_q(k)}$.
We show the following Lemma: 
\begin{lemma}\label{lemma:cycle}One has that 
\[
\mathbf{E_q(k)}=\begin{cases} \theta_2^q(f)\psi^{1-q}+
\mathcal{O}\left(
	\theta_2^q(f)\frac{q+k}{n_0}
\right)& \text{if $q>1$}\cr
\theta_1(f)
+
\mathcal{O}\left(
	\frac{k^2(2k-2)!!}{n_0(k!)^2}
\right) &\text{ if $q=1$.}
\end{cases}\]
\end{lemma}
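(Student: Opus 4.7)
First, for $q=1$, the claim reduces to a CLT-type estimate: $\mathbf{E_1(k)} = \mathds{E}[Y_{11}^2] = \mathds{E}[f(S)^2]$ with $S = n_0^{-1/2}\sum_{\ell=1}^{n_0} W_{1\ell}X_{\ell 1}$ a centered sum of variance $\sigma_w^2\sigma_x^2$. I would expand $\mathds{E}[S^{2k}]$ by the moment method: each $\ell$-value must occur with multiplicity at least $2$, and the leading contribution from perfect pairings yields $(2k-1)!!\,(\sigma_w^2\sigma_x^2)^k$. Dividing by $(k!)^2$ reproduces $\theta_1(f)$, while configurations with at least one $\ell$-value of multiplicity $\geq 3$ (or collisions reducing the number of distinct $\ell$'s by one) produce the claimed error $\mathcal{O}(k^2(2k-2)!!/(n_0(k!)^2))$.

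For $q>1$, I would restrict the sum in \eqref{eq:moment1} to $(i_p)_p$ and $(j_p)_p$ pairwise distinct and exploit the row-independence of $W$ and the column-independence of $X$: at each $i_p$, the $2k$ blue vertices belonging to the two niches $(i_p,j_{p-1})$ and $(i_p,j_p)$ must split into a perfect \emph{$i$-matching} in which paired vertices share the same $\ell$-value, and similarly at each $j_p$ one obtains a perfect \emph{$j$-matching} between $(i_p,j_p)$ and $(i_{p+1},j_p)$. The superposition of these matchings defines an equivalence on the $2qk$ blue vertices whose orbits are in bijection with the distinct $\ell$-values appearing in the summand, so the number of orbits controls the power of $n_0$ arising from the $\ell$-sum.

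The combinatorial heart of the proof is to show that, when $k$ is odd, the number of orbits is at most $q(k-1)+1$, with the extremal configurations having the following structure: in each niche, pair $(k-1)/2$ of the $k$ blue vertices into ``double-edge'' orbits (pairs that are simultaneously $i$- and $j$-internal to the niche), leaving exactly one ``leftover'' vertex per niche. The parity constraint forced by $k$ odd forbids fully internal matchings and requires at least one leftover per niche. The $2q$ leftovers are then chained by the surrounding $i$- and $j$-matchings into a single long orbit traversing the red cycle once, giving $q(k-1)$ size-$2$ orbits plus one size-$2q$ orbit. The number of such extremal matchings is $(k\,(k-2)!!)^{2q}$: $k$ choices of leftover and $(k-2)!!$ internal matchings of the remaining $k-1$ vertices in each of the $2q$ niches.

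Each size-$2$ orbit contributes a factor $\sigma_w^2\sigma_x^2$ and the long orbit contributes $(\sigma_w^2\sigma_x^2)^q$ (since each $i_p$ and each $j_p$ is visited exactly twice along it), so the total moment factor is $(\sigma_w^2\sigma_x^2)^{qk}$. Combining the $\ell$-sum factor $n_0^{q(k-1)+1}$, the $(i,j)$-sum factor $n_1^q m^q$, the prefactor $n_1^{-1}m^{-q}n_0^{-kq}(k!)^{-2q}$, and the identity $k(k-2)!!/k! = 1/(k-1)!!$ valid for odd $k$, the extremal matchings produce $\theta_2(f)^q\psi^{1-q}$. Sub-extremal configurations (fewer orbits, higher multiplicities, or cross-niche collisions) are smaller by at least a factor $n_0^{-1}$, and the number of distinct near-extremal types is $\mathcal{O}(q+k)$, yielding the claimed error $\mathcal{O}((q+k)/n_0)\theta_2(f)^q$. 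The main obstacle is the extremality claim itself: establishing the upper bound $q(k-1)+1$ on the number of orbits and enumerating all extremal configurations requires a careful parity argument exploiting the oddness of $k$ together with the simple-cycle structure of the red graph.
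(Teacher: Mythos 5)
Your approach is essentially the paper's: expand the trace, restrict to perfect matchings of the blue vertices, identify the extremal configuration as one long traversal of the cycle plus $(k-1)/2$ internal pairs per niche, count it, and argue that departures lose at least a factor $n_0^{-1}$. The ``orbit'' vocabulary is a clean repackaging of the paper's ``one blue cycle plus perfect matchings inside niches'' picture: your size-$2q$ orbit is the paper's blue cycle, your size-$2$ orbits are its matched pairs, and the number of orbits governs the power of $n_0$ from the $\ell$-sum. One substantive point in your favor: the combinatorial factors you give, $(k(k-2)!!)^{2q}$ for the matchings and $(2k-1)!!$ for $q=1$, are the ones that actually match the defining integrals in \eqref{eq:theta} (check $k=3$: $\theta_2(f)=((\sigma_w\sigma_x)^3/2)^2$, $\theta_1(f)=(\sigma_w\sigma_x)^{2k}(2k-1)!!/(k!)^2$), whereas the paper's displayed formulas have $(k(k-1)!!)^{2q}$ and $(2k)!!$, which appear to be typos since they would contradict \eqref{eq:theta}.

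The gap you explicitly flag --- proving that the orbit count is at most $q(k-1)+1$ --- is the crux, and you would need to actually supply it; but it is not hard given the simple-cycle structure. Parity of $k$ forces at least one leftover per niche, so with $L\geqslant 2q$ leftovers the number of double-edge pairs is $qk-L/2$. A leftover orbit, projected onto the dual cycle of niches, traces a closed walk alternating $i$-steps and $j$-steps; on a simple cycle of length $2q\geqslant 4$ such a walk cannot reverse direction (the two dual-neighbors of any niche are joined to it by one $i$-step and one $j$-step, so the alternation forbids backtracking), hence has length a positive multiple of $2q$. Thus there are at most $L/(2q)$ leftover orbits, and the total is $\leqslant qk-L/2+L/(2q)\leqslant q(k-1)+1$, with equality iff $L=2q$ and the walk traverses the cycle once --- precisely your extremal configuration. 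The paper instead verifies smallness case-by-case through its families $(a)$--$(d)$; either route is fine, but note that your assertion of ``$\mathcal{O}(q+k)$ near-extremal types'' is too coarse to recover the paper's bound as written: the paper's case $(b)$ already produces a relative error $\mathcal{O}(k^3/n_0)$, so one has to either use the assumption $k^3=o(n_0)$ explicitly, as the paper does, or give a sharper enumeration of near-extremal matchings.
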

\paragraph{Proof of Lemma \ref{lemma:cycle}: }
Because the $i$- and $j-$indices are pairwise distinct, the associated red graph is the simple cycle of length $2q$. Thus we can really encode the products in the summand as in the left case of Figure \ref{fig:cy}. Since each matrix entry has to arise twice, say for instance $W_{i_1,\ell^1_1}$, it needs to occur at least an other time in the product. There are then two different ways it can happen:
\begin{itemize}
\item[$(i)$] There exists $p\in\{2,\dotsc,k\}$ such that $\ell^1_p=\ell^1_1$.
\item[$(ii)$] There exists $p\in\{1,\dots,k\}$ such that $\ell_p^{2q}=\ell^1_1.$ Applying the same reasoning for $X_{\ell^1_1, j_1}$, there exists $p^\prime\in\{1,\dots, k\}$ such that $\ell_{p^\prime}^{2}=\ell^1_1$.
\end{itemize}
The same reasoning applies for each niche. Now, in order to maximize the number of pairwise distinct indices, one has to perform the most perfect matchings inside each niche. Note that, as $k$ is odd, case $(ii)$ necessarily occurs. 

\paragraph{}We first consider the contribution of those decorated graphs maximizing the number of pairwise distinct indices.  \\
\textit{The case where $q>1$:}
In this case,  there is a blue cycle of size $2q$ as in Figure \ref{fig:idistinct}. 
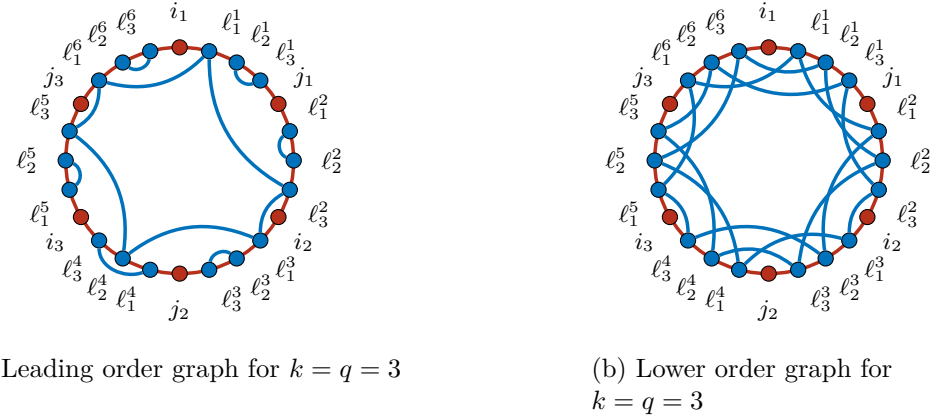
\begin{figure}[ht!]
	\centering
	\begin{subfigure}[t]{0.5\textwidth}
	\centering
	\begin{tikzpicture}
	  \draw[BrickRed, line width = .12em] (0,0) circle (1.5cm) ;
	  \node[fill=BrickRed, label=$i_{1}$, inner sep = 0pt, minimum size=.2cm] (1) at (360/6+30: 1.5cm) {};
	  \node[fill=BrickRed, label=135:$j_{3}$, inner sep = 0pt, minimum size=.2cm] (2) at (2*360/6+30: 1.5cm) {};
	  \node[fill=BrickRed, label=225:$i_3$, inner sep = 0pt, minimum size=.2cm] (3) at (3*360/6+30: 1.5cm)  {};
	  \node[fill=BrickRed, label=below:$j_{2}$, inner sep = 0pt, minimum size=.2cm] (4) at (4*360/6+30: 1.5cm) {};
	  \node[fill=BrickRed, label=315:$i_{2}$, inner sep = 0pt, minimum size=.2cm] (5) at (5*360/6+30: 1.5cm) {};
	  \node[fill=BrickRed, label=45:$j_{1}$, inner sep = 0pt, minimum size=.2cm] (6) at (6*360/6+30: 1.5cm) {};
	\foreach \a in {1,2,3}{
		\pgfmathsetmacro\result{4-\a}
		\node[fill=RoyalBlue, label=(90+\a*11):$\ell_{\pgfmathprintnumber{\result}}^{6}$, inner sep = 0pt, minimum size=.2cm] (6+\a) at (360/6+30+\a*360/24: 1.5cm) {};
		}
	\foreach \a in {1,2,3}{
		\pgfmathsetmacro\result{4-\a}
		\node[fill=RoyalBlue, label=(135+\a*22):$\ell_{\pgfmathprintnumber{\result}}^{5}$, inner sep = 0pt, minimum size=.2cm] (9+\a) at (2*360/6+30+\a*360/24: 1.5cm) {};
		}
	\foreach \a in {1,2,3}{
		\pgfmathsetmacro\result{4-\a}
		\node[fill=RoyalBlue, label=(225+\a*11):$\ell_{\pgfmathprintnumber{\result}}^{4}$, inner sep = 0pt, minimum size=.2cm] (12+\a) at (3*360/6+30+\a*360/24: 1.5cm) {};
		}
	\foreach \a in {1,2,3}{
		\pgfmathsetmacro\result{4-\a}
		\node[fill=RoyalBlue, label=(270+\a*11):$\ell_{\pgfmathprintnumber{\result}}^{3}$, inner sep = 0pt, minimum size=.2cm] (15+\a) at (4*360/6+30+\a*360/24: 1.5cm) {};
		}
	\foreach \a in {1,2,3}{
		\pgfmathsetmacro\result{4-\a}
		\node[fill=RoyalBlue, label=(315+\a*22):$\ell_{\pgfmathprintnumber{\result}}^{2}$, inner sep = 0pt, minimum size=.2cm] (18+\a) at (5*360/6+30+\a*360/24: 1.5cm) {};
		}
	\foreach \a in {1,2,3}{
		\pgfmathsetmacro\result{4-\a}
		\node[fill=RoyalBlue, label=(45+\a*11):$\ell_{\pgfmathprintnumber{\result}}^{1}$, inner sep = 0pt, minimum size=.2cm] (21+\a) at (6*360/6+30+\a*360/24: 1.5cm) {};
		}
	\draw[-, RoyalBlue] (6+1) edge[bend left=60, line width=.12em] (6+2);
	\draw[-,RoyalBlue] (9+2) edge[bend left=60, line width=.12em] (9+3);
	\draw[-,RoyalBlue] (12+1) edge[out=270, in=200, line width=.12em] (12+3);
	\draw[-,RoyalBlue] (15+1) edge[bend left=60, line width=.12em] (15+2);
	\draw[-,RoyalBlue] (18+2) edge[bend left=60, line width=.12em] (18+3);
	\draw[-,RoyalBlue] (21+1) edge[bend left=60, line width=.12em] (21+2);
	\draw[-,RoyalBlue] (6+3) edge[bend left, line width=.12em] (9+1);
	\draw[-,RoyalBlue] (9+1) edge[bend left, line width=.12em] (12+2);
	\draw[-,RoyalBlue] (12+2) edge[bend left, line width=.12em] (15+3);
	\draw[-,RoyalBlue] (15+3) edge[bend left, line width=.12em] (18+1);
	\draw[-,RoyalBlue] (18+1) edge[bend left, line width=.12em] (21+3);
	\draw[-,RoyalBlue] (21+3) edge[bend left, line width=.12em] (6+3);
	\end{tikzpicture}	
	\caption{Leading order graph for $k=q=3$}
	\label{fig:idistinct}
	\end{subfigure}
	\begin{subfigure}[t]{0.4\textwidth}
	\centering
	\begin{tikzpicture}
	  \draw[BrickRed, line width = .12em] (0,0) circle (1.5cm) ;
	  \node[fill=BrickRed, label=$i_{1}$, inner sep = 0pt, minimum size=.2cm] (1) at (360/6+30: 1.5cm) {};
	  \node[fill=BrickRed, label=135:$j_{3}$, inner sep = 0pt, minimum size=.2cm] (2) at (2*360/6+30: 1.5cm) {};
	  \node[fill=BrickRed, label=225:$i_3$, inner sep = 0pt, minimum size=.2cm] (3) at (3*360/6+30: 1.5cm)  {};
	  \node[fill=BrickRed, label=below:$j_{2}$, inner sep = 0pt, minimum size=.2cm] (4) at (4*360/6+30: 1.5cm) {};
	  \node[fill=BrickRed, label=315:$i_{2}$, inner sep = 0pt, minimum size=.2cm] (5) at (5*360/6+30: 1.5cm) {};
	  \node[fill=BrickRed, label=45:$j_{1}$, inner sep = 0pt, minimum size=.2cm] (6) at (6*360/6+30: 1.5cm) {};
	\foreach \a in {1,2,3}{
		\pgfmathsetmacro\result{4-\a}
		\node[fill=RoyalBlue, label=(90+\a*11):$\ell_{\pgfmathprintnumber{\result}}^{6}$, inner sep = 0pt, minimum size=.2cm] (6+\a) at (360/6+30+\a*360/24: 1.5cm) {};
		}
	\foreach \a in {1,2,3}{
		\pgfmathsetmacro\result{4-\a}
		\node[fill=RoyalBlue, label=(135+\a*22):$\ell_{\pgfmathprintnumber{\result}}^{5}$, inner sep = 0pt, minimum size=.2cm] (9+\a) at (2*360/6+30+\a*360/24: 1.5cm) {};
		}
	\foreach \a in {1,2,3}{
		\pgfmathsetmacro\result{4-\a}
		\node[fill=RoyalBlue, label=(225+\a*11):$\ell_{\pgfmathprintnumber{\result}}^{4}$, inner sep = 0pt, minimum size=.2cm] (12+\a) at (3*360/6+30+\a*360/24: 1.5cm) {};
		}
	\foreach \a in {1,2,3}{
		\pgfmathsetmacro\result{4-\a}
		\node[fill=RoyalBlue, label=(270+\a*11):$\ell_{\pgfmathprintnumber{\result}}^{3}$, inner sep = 0pt, minimum size=.2cm] (15+\a) at (4*360/6+30+\a*360/24: 1.5cm) {};
		}
	\foreach \a in {1,2,3}{
		\pgfmathsetmacro\result{4-\a}
		\node[fill=RoyalBlue, label=(315+\a*22):$\ell_{\pgfmathprintnumber{\result}}^{2}$, inner sep = 0pt, minimum size=.2cm] (18+\a) at (5*360/6+30+\a*360/24: 1.5cm) {};
		}
	\foreach \a in {1,2,3}{
		\pgfmathsetmacro\result{4-\a}
		\node[fill=RoyalBlue, label=(45+\a*11):$\ell_{\pgfmathprintnumber{\result}}^{1}$, inner sep = 0pt, minimum size=.2cm] (21+\a) at (6*360/6+30+\a*360/24: 1.5cm) {};
		}
	\draw[-,RoyalBlue] (6+1) edge[bend left, line width=.12em] (9+2);
	\draw[-,RoyalBlue] (6+2) edge[bend left, line width=.12em] (9+1);
	\draw[-,RoyalBlue] (6+3) edge[bend left, line width=.12em] (9+3);
	
	\draw[-,RoyalBlue] (9+1) edge[bend left, line width=.12em] (12+2);
	\draw[-,RoyalBlue] (9+2) edge[bend left, line width=.12em] (12+3);
	\draw[-,RoyalBlue] (9+3) edge[bend left, line width=.12em] (12+1);
	
	\draw[-,RoyalBlue] (12+1) edge[bend left, line width=.12em] (15+2);
	\draw[-,RoyalBlue] (12+2) edge[bend left, line width=.12em] (15+1);
	\draw[-,RoyalBlue] (12+3) edge[bend left, line width=.12em] (15+3);

	\draw[-,RoyalBlue] (15+1) edge[bend left, line width=.12em] (18+2);
	\draw[-,RoyalBlue] (15+2) edge[bend left, line width=.12em] (18+3);
	\draw[-,RoyalBlue] (15+3) edge[bend left, line width=.12em] (18+1);

	\draw[-,RoyalBlue] (18+1) edge[bend left, line width=.12em] (21+1);
	\draw[-,RoyalBlue] (18+2) edge[bend left, line width=.12em] (21+2);
	\draw[-,RoyalBlue] (18+3) edge[bend left, line width=.12em] (21+3);

	\draw[-,RoyalBlue] (21+1) edge[bend left, line width=.12em] (6+2);
	\draw[-,RoyalBlue] (21+2) edge[bend left, line width=.12em] (6+1);	
	\draw[-,RoyalBlue] (21+3) edge[bend left, line width=.12em] (6+3);
	\end{tikzpicture}
	\caption{\raggedright Lower order graph for $k=q=3$}
	\label{fig:idistinctnonad}
	\end{subfigure}
\caption{The contribution of the simple cycle}\label{fig:cy} \vspace*{-0.4cm}
\end{figure}
Thus we can construct the decorated  graphs maximizing the number of pairwise distinct indices  in the following way : One chooses an index $\ell_p$ in each niche which is in the only blue cycle of the graph and then the remaining blue edges are perfectly matched inside niches. 
The corresponding contribution from the basic cycle to the moment is, as every entry exactly occurs twice in the products, using \eqref{eq:variance}, 
\[
{E}_q(k)=\frac{((\sigma_w\sigma_x)^{k}k(k-1)!!)^{2q}n_0}{n_1m^qn_0^{kq}(k!)^{2q}}\frac{m!}{(m-q)!}\frac{n_1!}{(n_1-q)!}\frac{n_0!}{(n_0-(k-1)q)!}
\]  
To obtain this formula, note that we choose the $i$-labels over $n_1$ possible indices and the $j$-labels over $m$ indices. Now, we also choose the $\ell$-labels over $n_0$: the one for the blue cycle and those vertices corresponding to matched edges. Finally, we have to fix the \emph{blue} vertices belonging to the blue cycle: there are $k^{2q}$ possible choices. The number of perfect matchings on the rest of the vertices in each niche is then equal to $((k-1)!!)^{2q}$. We then obtain that
\begin{equation}\label{eq:leading}
E_q(k)=\left(
	\frac{(\sigma_w\sigma_x)^kk(k-1)!!}{k!}
\right)^{2q}\psi^{1-q}
+\mathcal{O}\left(
	\left(
		\frac{(\sigma_w\sigma_x)^kk(k-1)!!}{k!}
	\right)^{2q}\frac{q+k}{n_0}
\right).
\end{equation}
Note that, by \eqref{eq:theta}, one has that $\theta_2(f)=\left(
	\dfrac{(\sigma_w\sigma_x)^kk(k-1)!!}{k!}
\right)^{2}$ and 
 we can write 
\[
{E}_q(k)=\theta_2^q(f)\psi^{1-q}+
\mathcal{O}\left(
	\theta_2^q(f)\frac{q+k}{n_0}
\right)=\mathbf{{E}_q(k)}(1+o(1)).
\]

\subparagraph{Case where $q=1$.} The behavior in the case where $k=1$ is slightly different. 
Indeed in this case, we can do any perfect matching between the $2k$ \emph{blue} vertices since there is no difference between any factor $W$ or $X$ in the summand in $\eqref{eq:moment1}$. The graph can bee seen in Figure \ref{fig:q1}. Thus, the contribution of the moments in this case is the following
\[
{E}_1(k)=\frac{(\sigma_w\sigma_x)^{2k}(2k)!!}{(k!)^2}
+
\mathcal{O}\left(
	\frac{k^2(2k-2)!!}{n_0(k!)^2}
\right)
=\theta_1(f)
+
\mathcal{O}\left(
	\frac{k^2(2k-2)!!}{n_0(k!)^2}
\right)=\mathbf{{E}_1(k)}(1+o(1)),
\]
where the error comes from performing a matching which is not a perfect one.
	\begin{figure}[!ht]
		\begin{subfigure}{.49\linewidth}
		\centering
		\begin{tikzpicture}
			\node[fill=BrickRed, label=$i_{1}$, inner sep = 0pt, minimum size=.2cm] (1) at (0,0) {};
			\node[fill=BrickRed, label=$j_{1}$, inner sep = 0pt, minimum size=.2cm] (2) at (2,0) {};
			\draw[-, BrickRed, line width=.12em] (1) edge[bend left]  node[draw=black,line width=.05em,pos=.20, fill=RoyalBlue, inner sep = 0pt, minimum size = .2cm] (3) {} node[draw=black,line width=.05em,midway, fill=RoyalBlue, inner sep = 0pt, minimum size = .2cm] (4) {} node[draw=black,line width=.05em,pos=.80, fill=RoyalBlue, inner sep = 0pt, minimum size = .2cm] (5) {} (2) ;
			\draw[-, BrickRed, line width=.12em] (1) edge[bend right]  node[draw=black,line width=.05em,pos=.20, fill=RoyalBlue, inner sep = 0pt, minimum size = .2cm] (6) {} node[draw=black,line width=.05em,midway, fill=RoyalBlue, inner sep = 0pt, minimum size = .2cm] (7) {} node[draw=black,line width=.05em,pos=.80, fill=RoyalBlue, inner sep = 0pt, minimum size = .2cm] (8) {} (2) ;
			\draw[-, RoyalBlue, line width=.12em] (3) edge  (7);
			\draw[-, RoyalBlue, line width=.12em] (4) edge  (6);
			\draw[-, RoyalBlue, line width=.12em] (5) edge  (8);
		\end{tikzpicture}
		\caption{Contribution in the case where $q=1$}
		\label{fig:q1}
		\end{subfigure}
		\begin{subfigure}{.49\linewidth}
			\centering
			\begin{tikzpicture}
			\node[fill=BrickRed, label=$i_{1}$, inner sep = 0pt, minimum size=.2cm] (1) at (0,0) {};
			\node[fill=BrickRed, label=$j_{1}$, inner sep = 0pt, minimum size=.2cm] (2) at (2,0) {};
			\draw[-, BrickRed, line width=.12em] (1) edge[bend left]  node[draw=black,line width=.05em,pos=.20, fill=RoyalBlue, inner sep = 0pt, minimum size = .2cm] (3) {} node[draw=black,line width=.05em,midway, fill=RoyalBlue, inner sep = 0pt, minimum size = .2cm] (4) {} node[draw=black,line width=.05em,pos=.80, fill=RoyalBlue, inner sep = 0pt, minimum size = .2cm] (5) {} (2) ;
			\draw[-, BrickRed, line width=.12em] (1) edge[bend right]  node[draw=black,line width=.05em,pos=.20, fill=RoyalBlue, inner sep = 0pt, minimum size = .2cm] (6) {} node[draw=black,line width=.05em,midway, fill=RoyalBlue, inner sep = 0pt, minimum size = .2cm] (7) {} node[draw=black,line width=.05em,pos=.80, fill=RoyalBlue, inner sep = 0pt, minimum size = .2cm] (8) {} (2) ;
			\draw[-, RoyalBlue, line width=.12em] (3) edge  (7);
			\draw[-, RoyalBlue, line width=.12em] (4) edge  (6);
			\draw[-, RoyalBlue, line width=.12em] (5) edge[bend right=60]  (3);
			\draw[-, RoyalBlue, line width=.12em] (8) edge  (4);
		\end{tikzpicture}
		\caption{Subleading term in the case $q=1$.}
		\end{subfigure}	
		\vspace*{-0.4cm}
	\end{figure}
\paragraph{}We now consider the contribution of other matchings, that is those not maximizing the number of pairwise distinct indices. 
We will show that ${E_q}$ is indeed the typical contribution from the basic cycle, that is all other matchings lead to a negligible contribution with respect to $E_q$. There are four different phenomena that can give a (lower order) contribution. First, there may be more than one cycle linking every niche as in Figure \ref{fig:idistinctnonad}. Also, in at least one niche there could be more identifications between $\ell$-indices, which raises  moments of entries of $W$ and $X$. There could be an identification between the index of the cycle and an index from a perfect matching inside a niche. Finally, there could also exist identifications between two distinct niches; note we can only get higher moments in the case where the two niches are adjacent. While these four behaviors can happen simultaneously, we see the contribution separately since it would induce an even smaller order if counted together.
\subparagraph{a) There is more than one cycle between niches.} We call $E^{(1)}_q$ the contribution to the moments of such decorated graphs. Suppose there are $c$ cycles. Note that necessarily $c$ is odd since $k$ is odd and entries are centered, then we can write, if we suppose that indices $\ell$ not in cycles are being perfectly matched,
\begin{multline*}
{E}_q^{(1)}
=
\frac{(k^c(k-c)!!)^{2q}}{n_1m^qn_o^{kq}(k!)^{2q}}
\sum_{\substack{i_1,\dots,i_q\\\text{pairwise}\\\text{distinct}}}^{n_1}
\sum_{\substack{j_1,\dots,j_q\\\text{pairwise}\\\text{distinct}}}^{m}
\sum_{\ell_0,\dots,\ell_c}^{n_0}
\sum_{\substack{\ell_1^1,\dots\ell^1_{\frac{k-c}{2}}\\\dots\\\ell^{2q}_1\dots\ell^{2q}_{\frac{k-c}{2}}}}^{n_0}(\sigma_w\sigma_x)^{2kq}\\
=
\frac{((\sigma_w\sigma_x)^k k^c(k-c)!!)^{2q}}{n_1m^qn_0^{kq-c}(k!)^{2q}}
\frac{m!}{(m-q)!}
\frac{n_1!}{(n_1-q)!}
\frac{n_0!}{(n_0-(k-c)q)!}.
\end{multline*}
In order to understand the very first term, note that one has to select in each niche $c$ \emph{blue} vertices to create the cycles and then do a perfect matching for the rest of the vertices.  Thus one has that
\begin{equation}\label{eq:cycles}
{E}_q^{(1)}=\frac{((\sigma_w\sigma_x)^kk(k-c)!!)^{2q}\psi^{1-q}}{n_0^{(c-1)(q-1)}(k!)^{2q}}
\left(1+o\left(1\right)\right).
\end{equation}
Thus this is of smaller order than \eqref{eq:leading} when the number of cycles is strictly greater than 1 as in Figure \ref{fig:idistinctnonad} for instance. Indeed, one obtains that
\[
\frac{E_q^{(1)}}{E_q}
=
\mathcal{O}\left(
	\frac{1}{n_0^{(c-1)(q-1)}}
	\left(
		\frac{(k-c)!!}{(k-1)!!}
	\right)^{2q}
\right).
\]

\subparagraph{b) The matching in each niche is not a perfect matching- apart from the vertex in the cycle.} If the matching is more complicated than a perfect matching, the associated moments could be of higher order than the variance. Consider a matching inside a niche which is not a perfect one: there exists then  an identification between $a_1,\dotsc,a_b$ entries such that $a_1+\dotsb+a_b=k-1$ and such that at least one of the $a_i$'s is greater than 2. For ease we suppose that $a_1=\dots=a_{b_1}=2$ and $a_{b_1+1},\dots,a_b>2$ for some $b_1\in[\![1,b-1]\!]$. See e.g. Figure \ref{fig:fourth}. 
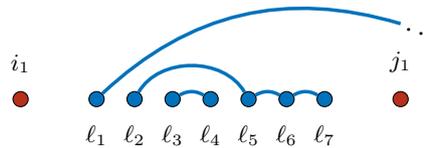
\begin{figure}[ht]
	\centering
	\begin{tikzpicture}
		\node[fill=BrickRed, label=$i_{1}$, inner sep = 0pt, minimum size=.2cm] (0) at (0,0) {};
		\foreach \a in {1,2,...,7}{
			\node[fill=RoyalBlue, label=below:$\ell_{\a}$, inner sep = 0pt, minimum size=.2cm] (\a+1) at (1+\a*0.5-0.5,0) {};
		}
		\node[fill=BrickRed, label=$j_1$, inner sep = 0pt, minimum size =.2cm] (1) at (5,0) {};		
		\draw[-, RoyalBlue] (1+1) edge[bend left, line width = .12em] (5,1);
		\draw[-, RoyalBlue] (2+1) edge[out=60, in=120, line width = .12em] (5+1);
		\draw[-, RoyalBlue] (3+1) edge[bend left, line width = .12em] (4+1);
		\draw[-, RoyalBlue] (5+1) edge[bend left, line width = .12em] (6+1);
		\draw[-, RoyalBlue] (6+1) edge[bend left, line width = .12em] (7+1);
		\node[draw=none, rotate=-20] at (5.3,.857) {\dots};
	\end{tikzpicture}
	\caption{Niche where the induced graph is not a perfect matching which raises a fourth moment in the case where $k=7$.}
	\label{fig:fourth}
\end{figure}

We call ${E}_q^{(2)}$ the contribution of all such matchings where a single niche breaks the perfect matching condition. Then we obtain that:

\begin{multline*}
\frac{E_q^{(2)}}{E_q}
=
\sum_{b=1}^{\frac{k-1}{2}-1}
\sum_{b_1=1}^{b-1}
\sum_{\substack{a_{b_1+1}\dots a_b>2\\\sum a_j=k-1-2b_1}}
\frac{(k-1)!}{(k-1)!!\prod_{i=1}^b a_i!}
\frac{n_0!}{(n_0-(1+b+\frac{k-1}{2}(2q-1)))!}
\times\\\times
\frac{(n_0-(1+(k-1)q)!}{n_0!}
\frac{\prod_{b_1+1}^b\mathds{E}\vert W_{11}\vert^{a_p}\mathds{E}\vert X_{11}\vert^{a_p}}{(\sigma_w\sigma_x)^{2(\frac{k-1}{2}-b_1)}}.
\end{multline*}
The first term in the summand corresponds to assigning the $k-1$ remaining \emph{blue} vertices (after the choice of the cycle) into $b$ classes of size $a_1\dots a_b$. We can bound it in the following way 
\[
\frac{(k-1)!}{(k-1)!!\prod_{i=1}^b a_i}
\leqslant C\frac{2^{\frac{k-1}{2}-b_1}(\frac{k-1}{2})!}{\prod_{i=b_1+1}^{b}a_i}
\leqslant C\left(\frac{k-1}{2}\right)^{\frac{k-1}{2}-b}\frac{2^{\frac{k-1}{2}-b_1}}{\prod_{i=b_1+1}^b a_i}\leqslant C\left({k-1}\right)^{(\frac{k-1}{2}-b)}.
\]
In the first inequality we use the fact that $a_1=\dots=a_{b_1}=2$ and the definition of the double factorial. Then we expand the factorial and in the last inequality we use the fact that $a_i\geqslant 3$ for $i>b_1$.
Now, for the second term, we compare the number of possible choices for $\ell$ indices, yielding that
\[
\frac{(n_0-(1+(k-1)q)!}{(n_0-(1+b+\frac{k-1}{2}(2q-1)))!}
\leqslant
\frac{1}{n_0^{\frac{k-1}{2}-b}}e^{-\frac{C(kq)^2}{N}}.
\]
Finally, the last term in the summand corresponds to the different possible moments, as only variances intervene in the leading contribution, while higher moments can appear inside the niche $\{i_1,j_1\}$. We use the fact that
\begin{equation}\label{eq:highmom1}
\frac{\prod_{b_1+1}^b\mathds{E}\vert W_{11}\vert^{a_p}\mathds{E}\vert X_{11}\vert^{a_p}}{(\sigma_w\sigma_x)^{2(\frac{k-1}{2}-b_1)}}
\leqslant
\frac{A^{2\sum_{i\geqslant b_1+1}a_i}}{\prod_{i\geqslant b_1+1}\sigma_w^{a_i}\sigma_x^{a_i}}
=
\left(
	\frac{A^4}{\sigma_w^2\sigma_x^2}
\right)^{\frac{k-1}{2}-b_1}.
\end{equation}
Now we need to bound the combinatorial factor coming from the sums: 
\[
\sum_{b_1=1}^{b-1}
\sum_{\substack{a_{b_1+1},\dots,a_{b}\geqslant 3\\\sum a_j=k-1-2b_1}}
\leqslant \sum_{b_1=1}^{b-1}\binom{k-1-3b-b_1+b-b_1-1}{b-b_1-1}
\leqslant \sum_{b_1=1}^{b-1}(k-1)^{k-1-3b+b_1}\leqslant (k-1)^{2(\frac{k-1}{2}-b)},
\]
where we use in the first inequality that $\sum_{j}(a_j-3)=k-1-2b_1-3(b-b_1).$ Finally, putting all these contributions together, we obtain the following comparison between $E^{(2)}_q$ and $E_q$,
\begin{equation}\label{eq:contribmatching}
\frac{E_q^{(2)}}{E_q}\leqslant 
\sum_{b=1}^{\frac{k-1}{2}-1}
\left(
	\frac{CA^4}{\sigma_w^2\sigma_x^2}
	\frac{(k-1)^3}{n_0}
\right)^{3(\frac{k-1}{2}-b)}
=
\mathcal{O}\left(
	\frac{Ck^3}{n_0}
\right).
\end{equation}

Note that $k^3=o(n_0)$. Here we suppose that in all other niches a perfect matching and a single cycle is used to match the blue vertices. The other cases are just negligible.
\subparagraph{c) There are identifications between matchings from different niches} If these niches are not adjacent, then such matchings would not increase the moments of the entries of $W$ or $X$. On the contrary, matchings between adjacent niches may result into moments of higher order than the variance. We can then perform the same analysis as the previous one where we replace $k-1$ (the remaining indices after the choice of the cycle in one niche) to $2k-2$ corresponding to the number of vertices of two adjacent niches. This yields a contribution in the order of \eqref{eq:contribmatching} with respect to $E_q$.
\subparagraph{d) There are identifications between the cycle and perfect matchings inside niches.} Suppose that these identifications happen in $d$ niches, and for $p\in\{1,\dots,d\}$, we identify the index from the cycle with $2b_p$ \emph{blue} vertices from the niche. Indeed if the number of identifications was odd, in order to obtain a non-vanishing term, we would need to either create another cycle or perform more identifications inside the niches. Thus, we obtain the following upper bound
\begin{multline*}
\frac{E_q^{(3)}}{E_q}
=
\sum_{d=1}^{2q}
\sum_{b_1,\dots,b_d=1}^{\frac{k-1}{2}}
\binom{2q}{d}
\left[
	\prod_{p=1}^d\binom{k-1}{b_p}
\right]
\prod_{i=1}^d\mathds{E}\vert W_{11}\vert^{2+2b_p}\mathds{E}\vert X_{11}\vert^{2+2b_p}\times \\
\times\frac{((k-1)!!)^{2q-d}\prod_{p=1}^d((k-2b_p-1)!!)}{n_0^{\sum_{p=1}^db_p}((k-1)!!)^{2q}(\sigma_w\sigma_x)^{2d+\sum_{p=1}^d 2b_p}}.
\end{multline*} 
This comes from the choices of the niches, the identifications we make in each niche, and the perfect matchings we perform in the other niches. Finally, we suppose that we perform perfect matchings in the rest of the $d$ niches. Then, we can use the bounds
\begin{equation}\label{eq:highmoment2}
\prod_{p=1}^d \frac{1}{b_p!}\leqslant 1,
\quad
\prod_{i=1}^d\mathds{E}\vert W_{11}\vert^{2+2b_p}\mathds{E}\vert X_{11}\vert^{2+2b_p}
\leqslant A^{4d+4\sum b_p}
\quad\text{and}\quad
\frac{(k-1)!!^{2q-d}\prod_{p=1}^d(k-1-2b_p)!!}{(k-1)!!^{2q}}\leqslant 1.
\end{equation}
From the above we obtain that
\[
\frac{E_q^{(3)}}{E_q}
\leqslant 
\sum_{d=1}^{2q}
\binom{2q}{d}
\left(
	\frac{A^{4}}{\sigma_w^2\sigma_x^2}
\right)^{d}
\sum_{b_1,\dots,b_p=1}^{\frac{k-1}{2}}
\left(
	\frac{A^4(k-1)}{2\sigma_w^2\sigma_x^2 n_0}
\right)^{\sum b_p}
=
\sum_{d=1}^{2q}
\binom{2q}{d}
\left(
	\frac{A^4}{\sigma_w\sigma_x}
	\sum_{b=1}^{\frac{k-1}{2}}
	\left(
		\frac{A^4 (k-1)}{2\sigma_w^2\sigma_x^2n_0}
	\right)^b
\right)^d.
\]
Now since $k\ll n_0$, we obtain that
$\dfrac{E_q^{(3)}}{E_q}
=
\mathcal{O}\left(
	\dfrac{Ck}{n_0}
\right).$
This finishes the proof of Lemma \ref{lemma:cycle}. \hfill $\square$
\subsubsection{Contribution of general admissible graphs}
\begin{lemma}\label{lemma:GA}
The total contribution from admissible graphs to the spectral moment is 
\begin{equation}\label{GA}
E'_q(k)=\sum_{I_i,I_j=0}^q\sum_{b=0}^{I_i+I_j+1}{\mathcal{A}}(q,I_i,I_j,b)\theta_1(f)^b\theta_2(f)^{q-b}\psi^{I_i+1-q}\phi^{I_j}\left(1+o(1)\right).
\end{equation}
\end{lemma}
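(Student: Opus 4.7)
The plan is to leverage the cactus structure of admissible graphs in order to reduce each admissible graph's contribution to a product of per-cycle contributions, each of which has already been evaluated in Section 3.1.2. Because in an admissible graph every red edge belongs to a unique cycle and any two cycles share at most one vertex, the leading-order blue matching is forced to split cycle-by-cycle: a blue edge bridging niches of two distinct red cycles would necessarily leave some $W$- or $X$-entry with multiplicity one, killing the expectation, unless one further identifies $\ell$-indices and thereby loses a factor of $n_0$.

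First I would set up the combinatorics. Fix an admissible graph $\mathcal{G}$ with $I_i$ $i$-identifications, $I_j$ $j$-identifications, and $b$ cycles of size $2$. Viewing $\mathcal{G}$ as a tree of cycles, the number of tree vertices (the cycles) exceeds the number of tree edges (the identifications) by one, so $\mathcal{G}$ has exactly $c = I_i + I_j + 1$ cycles. Label the cycle sizes $2q_1,\dots,2q_c$ with $\sum_i q_i = q$; by assumption exactly $b$ of the $q_i$'s equal $1$ and the remaining $c-b$ are $\geqslant 2$. The graph then has $q-I_i$ distinct $i$-vertices and $q-I_j$ distinct $j$-vertices.

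Next I would read off the leading-order contribution. For each red cycle of size $2q_i\geqslant 4$ the leading blue matching is exactly the one analyzed in Lemma \ref{lemma:cycle}: a single blue cycle picks one vertex from each of the $2q_i$ niches (contributing one free $\ell$-index) while the remaining $k-1$ blue vertices of each niche are paired perfectly (contributing $(k-1)q_i$ further $\ell$-indices), and the niche factor yields $\theta_2(f)^{q_i}$. For each red $2$-cycle the $2k$ blue vertices of the two niches are matched freely, producing $\theta_1(f)$ and $k$ free $\ell$-indices. Multiplying over the $c$ cycles, the $\theta$-factor becomes $\theta_1(f)^{b}\theta_2(f)^{q-b}$, and the total number of free $\ell$-indices is $c+q(k-1)=(I_i+I_j+1)+q(k-1)$. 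Combining with the free choices $n_1^{q-I_i}m^{q-I_j}$ of $i$- and $j$-indices, the prefactor $(n_1 m^q n_0^{kq})^{-1}$, and the limits $n_0/n_1\to\psi$, $n_0/m\to\phi$, the scaling factor collapses exactly to $\psi^{I_i+1-q}\phi^{I_j}$. Summing over the $\mathcal{A}(q,I_i,I_j,b)$ admissible graphs of type $(I_i,I_j,b)$ gives \eqref{GA}.

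The main obstacle is to show that the subleading matchings quantified in cases (a)--(d) of Section 3.1.2 remain $o(1)$ in the multi-cycle setting, including blue edges that bridge two distinct red cycles glued at a shared vertex. By the cactus property, such cross-cycle matchings can only raise higher moments of $W$ or $X$ in a pair of niches adjacent to the shared vertex, so the quantitative bound from Case (c) applies locally at each gluing and produces an error of order $O(k^3/n_0)$. Since there are at most $I_i+I_j\leqslant 2q$ gluings and each per-cycle error is uniform in $k$ by the sub-Gaussian tail assumption \eqref{eq:assumwx}, the accumulated error remains $o(1)$ under the degree growth condition \eqref{eq:degreepoly}, which concludes the proof.
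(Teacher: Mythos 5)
Your proposal follows essentially the same route as the paper: both use the cactus/tree-of-cycles structure to reduce the contribution of an admissible graph to a product of per-cycle factors already computed in Lemma~\ref{lemma:cycle}, count $c=I_i+I_j+1$ cycles, compute the free $\ell$-index count $(I_i+I_j+1)+(k-1)q$, and verify that the normalization collapses to $\psi^{I_i+1-q}\phi^{I_j}$. Your leading-order computation is correct and, if anything, slightly more explicit than the paper's in spelling out the cycle-size decomposition $\sum q_i=q$ and the $\ell$-index bookkeeping.

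The only place where you are less careful than the paper is in the error analysis of cross-cycle blue identifications. You assert that, by the cactus property, ``such cross-cycle matchings can only raise higher moments of $W$ or $X$ in a pair of niches adjacent to the shared vertex,'' and then invoke the Case~(c) bound. This is not quite the full picture: there are cross-cycle identifications that do \emph{not} raise higher moments. The paper distinguishes two sub-scenarios. First, if the cross-cycle identification matches two $W$-entries around a shared $i$-vertex, the corresponding $X$-entries still need to be paired, and this is resolved either by breaking a perfect matching inside a niche (which does raise higher moments, giving the $\mathcal{O}(k^3/n_0)$ bound you cite) \emph{or} by forcing two blue cycles from two distinct red cycles to share an $\ell$-vertex --- which raises no higher moments at all but simply costs one index choice, i.e.\ $\mathcal{O}(n_0^{-1})$. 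Second, cross-cycle identifications between non-adjacent niches raise no higher moments and again cost $\mathcal{O}(n_0^{-1})$; you do not address these. Your quantitative conclusion survives because both omitted scenarios are at most $\mathcal{O}(n_0^{-1})$, but the justification as stated is incomplete. (Also, your appeal to the sub-Gaussian tail assumption is premature here: Lemma~\ref{lemma:GA} is proved under the bounded-entry assumption set at the start of Section~3.1, and the sub-Gaussian extension is only carried out later in Section~3.5.)
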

\begin{remark}
Lemma \ref{lemma:GA} is almost the statement of Theorem \ref{theo:pol}.
\end{remark}
 
\paragraph{Proof of Lemma \ref{lemma:GA}:}
We now suppose that there are $I_i$ identifications between the vertices indexed by $i$ labels and $I_j$ identifications between the vertices indexed by $j$ labels. Note that by our definition, such a graph is admissible if and only if it consists of $I_i+I_j+1$ cycles. See for example Figures \ref{fig:admi1} and \ref{fig:admi2}. As seen earlier in the case of a simple cycle, the case of a cycle of size 2 has to be considered separately. Thus we denote by $b$ the number of cycles of size 2.

\begin{figure}[ht!]
	\begin{subfigure}[t]{.33\linewidth}
	\captionsetup{width=.9\linewidth}

	\centering
	\begin{tikzpicture}
		\draw[BrickRed, line width=.12em] (0,0) circle (1.5cm);
		\node[fill=BrickRed, label=$j_{1}$, inner sep = 0pt, minimum size=.2cm] (40) at (360/4: 1.5cm) {};
		\node[fill=BrickRed, label=left:$i_{1}$, inner sep = 0pt, minimum size=.2cm] (41) at (2*360/4: 1.5cm) {};
		\node[fill=BrickRed, label=below:$j_{3}$, inner sep = 0pt, minimum size=.2cm] at (3*360/4: 1.5cm) {};
		\node[fill=BrickRed, label=left:$i_{2}{=}i_3$, inner sep = 0pt, minimum size=.2cm] at (4*360/4: 1.5cm) {};
		\draw[BrickRed, line width = .12em] (2.25,0) circle (.75cm);
		\node[fill=BrickRed, label=left:$j_2$, inner sep = 0pt, minimum size = .2cm] at (3,0) {};
		\foreach \b in {1,2,...,4}{
			\foreach \a in {1,2,3}{
			\node[fill=RoyalBlue, inner sep = 0pt, minimum size=.2cm] (\b+\a) at (\b*360/4+\a*360/16: 1.5cm) {};
			}
		}
		\begin{scope}[xshift=2.25cm]
		\foreach \a in {1,2,3}{
			\node[fill=RoyalBlue, inner sep=0pt, minimum size=.2cm] (5+\a) at (45*\a: .75cm) {};
		}
		\foreach \a in {1,2,3}{
			\node[fill=RoyalBlue, inner sep=0pt, minimum size=.2cm] (6+\a) at (45*\a+180: .75cm) {};
		}
		\end{scope}
		
		\draw[-, RoyalBlue] (1+1) edge[out=175, in=95, line width=.12em] (1+3);
		\draw[-, RoyalBlue] (2+1) edge[bend left, line width=.12em] (2+2);
		\draw[-, RoyalBlue] (3+2) edge[bend left, line width=.12em] (3+3);
		\draw[-, RoyalBlue] (4+1) edge[bend left, line width=.12em] (4+2);
		\draw[-, RoyalBlue] (5+1) edge[bend left, line width=.12em] (5+2);
		\draw[-, RoyalBlue] (6+1) edge[out=270, in=270, line width=.12em] (6+3);

		\draw[-, RoyalBlue] (1+2) edge[bend left, line width=.12em] (2+3);
		\draw[-, RoyalBlue] (2+3) edge[bend left, line width=.12em] (3+1);
		\draw[-, RoyalBlue] (3+1) edge[bend left, line width=.12em] (4+3);
		\draw[-, RoyalBlue] (4+3) edge[bend left, line width=.12em] (1+2);

		\draw[-, RoyalBlue] (5+3) edge[bend left, line width=.12em] (6+2);
	\end{tikzpicture}
	\caption{\raggedright Admissible graph with the $i$-identification $i_2=i_3$ for $k=q=3$.}
	\label{fig:admi1}
	\end{subfigure}
	\begin{subfigure}[t]{.33\linewidth}
	\captionsetup{width=.9\linewidth}
	\centering
	\begin{tikzpicture}
		\node[fill=BrickRed, inner sep = 0pt, minimum size = .2cm, label=below:$j_1$] (1) at (0,0) {};
		\node[fill=BrickRed, inner sep = 0pt, minimum size = .2cm, label=above:$i_1$] (2) at (360/3-30: 2cm) {};
		\node[fill=BrickRed, inner sep = 0pt, minimum size = .2cm, label=below:$i_2$] (3) at (2*360/3-30: 2cm) {};
		\node[fill=BrickRed, inner sep = 0pt, minimum size = .2cm, label=below:$i_3$] (4) at (3*360/3-30: 2cm) {};
		\draw[-, BrickRed, line width=.12em] (1) edge[bend left]  node[draw=black,line width=.05em,pos=.20, fill=RoyalBlue, inner sep = 0pt, minimum size = .2cm] (5) {} node[draw=black,line width=.05em,midway, fill=RoyalBlue, inner sep = 0pt, minimum size = .2cm] (6) {} node[draw=black,line width=.05em,pos=.80, fill=RoyalBlue, inner sep = 0pt, minimum size = .2cm] (7) {} (2) ;
		\draw[-, BrickRed, line width=.12em] (1) edge[bend right]  node[draw=black,line width=.05em,pos=.20, fill=RoyalBlue, inner sep = 0pt, minimum size = .2cm] (8) {} node[draw=black,line width=.05em,midway, fill=RoyalBlue, inner sep = 0pt, minimum size = .2cm] (9) {} node[draw=black,line width=.05em,pos=.80, fill=RoyalBlue, inner sep = 0pt, minimum size = .2cm] (10) {} (2) ;
	
		\draw[-, BrickRed, line width=.12em] (1) edge[bend left]  node[draw=black,line width=.05em,pos=.20, fill=RoyalBlue, inner sep = 0pt, minimum size = .2cm] (11) {} node[draw=black,line width=.05em,midway, fill=RoyalBlue, inner sep = 0pt, minimum size = .2cm] (12) {} node[draw=black,line width=.05em,pos=.80, fill=RoyalBlue, inner sep = 0pt, minimum size = .2cm] (13) {} (3) ;
			\draw[-, BrickRed, line width=.12em] (1) edge[bend right]  node[draw=black,line width=.05em,pos=.20, fill=RoyalBlue, inner sep = 0pt, minimum size = .2cm] (14) {} node[draw=black,line width=.05em,midway, fill=RoyalBlue, inner sep = 0pt, minimum size = .2cm] (15) {} node[draw=black,line width=.05em,pos=.80, fill=RoyalBlue, inner sep = 0pt, minimum size = .2cm] (16) {} (3) ;
		
		\draw[-, BrickRed, line width=.12em] (1) edge[bend left]  node[draw=black,line width=.05em,pos=.20, fill=RoyalBlue, inner sep = 0pt, minimum size = .2cm] (17) {} node[draw=black,line width=.05em,midway, fill=RoyalBlue, inner sep = 0pt, minimum size = .2cm] (18) {} node[draw=black,line width=.05em,pos=.80, fill=RoyalBlue, inner sep = 0pt, minimum size = .2cm] (19) {} (4) ;
		\draw[-, BrickRed, line width=.12em] (1) edge[bend right]  node[draw=black,line width=.05em,pos=.20, fill=RoyalBlue, inner sep = 0pt, minimum size = .2cm] (20) {} node[draw=black,line width=.05em,midway, fill=RoyalBlue, inner sep = 0pt, minimum size = .2cm] (21) {} node[draw=black,line width=.05em,pos=.80, fill=RoyalBlue, inner sep = 0pt, minimum size = .2cm] (22) {} (4) ;
		
		\draw[-,RoyalBlue, line width=.12em] (5) edge[bend left=60,line width=.12em] (6);
		\draw[-,RoyalBlue, line width=.12em] (7) edge[bend left=20, line width=.12em] (8);
		\draw[-,RoyalBlue, line width=.12em] (9) edge[bend right=60, line width=.12em] (10);
		
		\draw[-,RoyalBlue, line width=.12em] (11) edge[bend left=60,line width=.12em] (13);
		\draw[-,RoyalBlue, line width=.12em] (12) edge[bend right=20, line width=.12em] (14);
		\draw[-,RoyalBlue, line width=.12em] (15) edge[bend right=60, line width=.12em] (16);
		
		\draw[-,RoyalBlue, line width=.12em] (17) edge[bend left=60,line width=.12em] (18);
		\draw[-,RoyalBlue, line width=.12em] (22) edge[bend left=20, line width=.12em] (19);
		\draw[-,RoyalBlue, line width=.12em] (20) edge[bend right=60, line width=.12em] (21);
	\end{tikzpicture}
	\caption{\raggedright Admissible graph with the $j$-identification $j_1=j_2=j_3$ for $k=q=3$.}
	\label{fig:admi2}
	\end{subfigure}
	\begin{subfigure}[t]{.33\linewidth}
	\captionsetup{width=.9\linewidth}
	\centering
		\begin{tikzpicture}
		\node[fill=BrickRed, label=$i_{1}{=}i_3$, inner sep = 0pt, minimum size=.2cm] (40) at (360/4: 1.5cm) {};
		\node[fill=BrickRed, label=left:$j_{2}{=}j_3$, inner sep = 0pt, minimum size=.2cm] (41) at (2*360/4: 1.5cm) {};
		\node[fill=BrickRed, label=below:$j_2$, inner sep = 0pt, minimum size=.2cm] (42) at (3*360/4: 1.5cm) {};
		\node[fill=BrickRed, label=left:$i_{2}$, inner sep = 0pt, minimum size=.2cm] (43) at (4*360/4: 1.5cm) {};
		
		\draw[-, BrickRed] (40) edge[bend right=40, line width=.12em] node[draw=black,line width=.05em,pos=.20, fill=RoyalBlue, inner sep = 0pt, minimum size = .2cm] (7) {} node[draw=black,line width=.05em,midway, fill=RoyalBlue, inner sep = 0pt, minimum size = .2cm] (8) {} node[draw=black,line width=.05em,pos=.80, fill=RoyalBlue, inner sep = 0pt, minimum size = .2cm] (9) {} (41);
		
		\draw[-, BrickRed] (40) edge[line width=.12em] node[draw=black,line width=.05em,pos=.20, fill=RoyalBlue, inner sep = 0pt, minimum size = .2cm] (1) {} node[draw=black,line width=.05em,midway, fill=RoyalBlue, inner sep = 0pt, minimum size = .2cm] (2) {} node[draw=black,line width=.05em,pos=.80, fill=RoyalBlue, inner sep = 0pt, minimum size = .2cm] (3) {} (41);
		
		\draw[-, BrickRed] (40) edge[bend left=40, line width=.12em] node[draw=black,line width=.05em,pos=.20, fill=RoyalBlue, inner sep = 0pt, minimum size = .2cm] (4) {} node[draw=black,line width=.05em,midway, fill=RoyalBlue, inner sep = 0pt, minimum size = .2cm] (5) {} node[draw=black,line width=.05em,pos=.80, fill=RoyalBlue, inner sep = 0pt, minimum size = .2cm] (6) {} (41);
		
		\draw[-, BrickRed] (41) edge[bend right=40, line width=.12em] node[draw=black,line width=.05em,pos=.20, fill=RoyalBlue, inner sep = 0pt, minimum size = .2cm] (10) {} node[draw=black,line width=.05em,midway, fill=RoyalBlue, inner sep = 0pt, minimum size = .2cm] (11) {} node[draw=black,line width=.05em,pos=.80, fill=RoyalBlue, inner sep = 0pt, minimum size = .2cm] (12) {} (42);
		
		\draw[-, BrickRed] (42) edge[bend right=40, line width=.12em] node[draw=black,line width=.05em,pos=.20, fill=RoyalBlue, inner sep = 0pt, minimum size = .2cm] (13) {} node[draw=black,line width=.05em,midway, fill=RoyalBlue, inner sep = 0pt, minimum size = .2cm] (14) {} node[draw=black,line width=.05em,pos=.80, fill=RoyalBlue, inner sep = 0pt, minimum size = .2cm] (15) {} (43);
		
		\draw[-, BrickRed] (43) edge[bend right=40, line width=.12em] node[draw=black,line width=.05em,pos=.20, fill=RoyalBlue, inner sep = 0pt, minimum size = .2cm] (16) {} node[draw=black,line width=.05em,midway, fill=RoyalBlue, inner sep = 0pt, minimum size = .2cm] (17) {} node[draw=black,line width=.05em,pos=.80, fill=RoyalBlue, inner sep = 0pt, minimum size = .2cm] (18) {} (40);
		
		\draw[-,RoyalBlue, line width=.12em] (18) edge[bend left=40,line width=.12em] (6);
		\draw[-,RoyalBlue, line width=.12em] (6) edge[bend left,line width=.12em] (12);
		\draw[-,RoyalBlue, line width=.12em] (12) edge[bend left,line width=.12em] (15);
		\draw[-,RoyalBlue, line width=.12em] (15) edge[bend left,line width=.12em] (18);
		\draw[-,RoyalBlue, line width=.12em] (10) edge[bend left=60,line width=.12em] (11);
		\draw[-,RoyalBlue, line width=.12em] (13) edge[bend left=60,line width=.12em] (14);
		\draw[-,RoyalBlue, line width=.12em] (16) edge[bend left=60,line width=.12em] (17);
		\draw[-,RoyalBlue, line width=.12em] (4) edge[bend left=60,line width=.12em] (5);
		\draw[-,RoyalBlue, line width=.12em] (3) edge[line width=.12em] (9);
		\draw[-,RoyalBlue, line width=.12em] (1) edge[line width=.12em] (7);
		\draw[-,RoyalBlue, line width=.12em] (2) edge (8);
		\end{tikzpicture}
		\caption{\raggedright Non-admissible graph for $k=q=3$.}
		\label{fig:nonadmi}
	\end{subfigure}
	\caption{Examples of admissible and non-admissible graphs}
\end{figure}
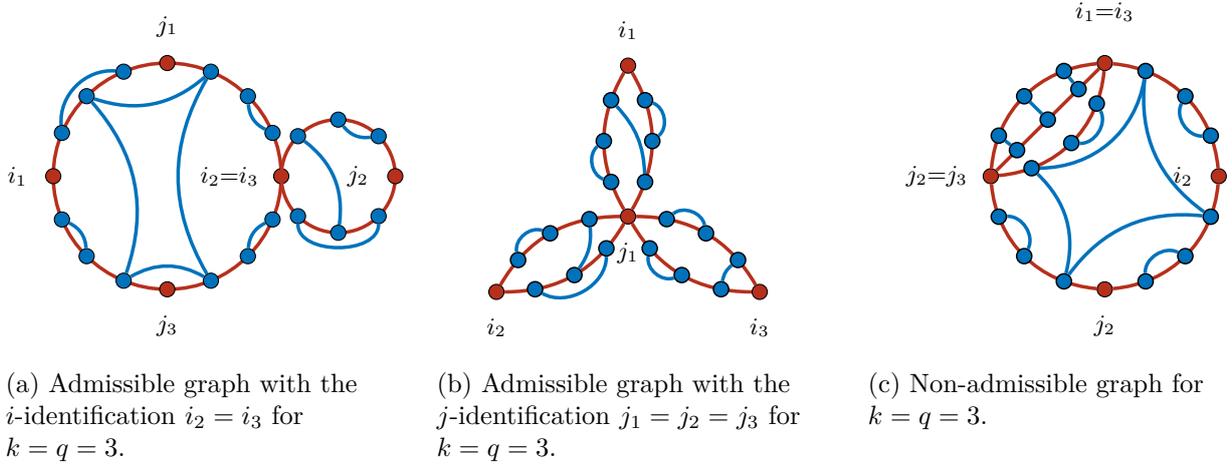
We can do a similar analysis in the case of general admissible graphs because we can realize \emph{blue} identifications inside each red cycle as they are well defined.  
Thus, recalling
$\mathcal{A}(q,I_i,I_j,b)$ from Definition \ref{defA}, we can write the contribution from all admissible graphs as
\begin{multline*}
{E}_q'(k)=\frac{1+o(1)}{n_1m^qn_0^{kq}}\sum_{I_i,I_j=0}^q\sum_{b=0}^{I_i+I_j+1}\frac{n_1!}{(n_1-q+I_i)!}\frac{m!}{(m-q+I_j)!}\times\\
\times\mathcal{A}(k,q,I_i,I_j,b)\theta_1^b(f)n_0^{kb}\theta_2^{q-b}(f)n_0^{(k-1)(q-b)+I_i+I_j+1-b}.
\end{multline*}
Thus we obtain \eqref{GA} provided we show that the error terms are negligible. 

Note that the same error terms arise as in cases a), b), c) or d) for each red cycle: their contribution is then negligible as before as soon as matchings are still performed inside each cycle. \\
Another possible contribution may come from cross-cycle \emph{blue} identifications: we now show this contribution is subleading. 
Consider the first case where such a cross-cycle identification arises around an $i$-identification or a $j$-identification: see e.g. Figure \ref{fig:crosscycle}. These \emph{blue} edges match entries of $W$ to get a non-vanishing moments. However, in order to match the corresponding $X$ entries, some new identifications are needed. This either implies that inside a niche, the matching is not a perfect matching. In this case the total final contribution is in the order of \eqref{eq:contribmatching}. Either this implies that two \emph{blue} cycles going through two cycles bear the same vertex. Thus the total contribution of such cases is in the order of $n_0^{-1}E_q$ due to the fact that one loses a possible choice for the index of a \emph{blue} cycle.
There may also exist cross-cycle \emph{blue} identifications which do not arise around an $i-$ or $j-$ identification. It is not difficult to check in this case that the total contribution is again at most of the order of $n_0^{-1}E_q$. This finishes the proof of Lemma \ref{lemma:GA}. \hfill $\square$

	\begin{figure}[ht!]
		\centering
		\begin{tikzpicture}
			\node[fill=BrickRed, label=left:$i_1$, inner sep = 0pt, minimum size=.2cm] (1) at (0,0) {};
			\node[fill=BrickRed, label=right:$j_1$, inner sep = 0pt, minimum size=.2cm] (2) at (.75,.75) {};
			\node[fill=BrickRed, label=right:$j_{p_1}$, inner sep = 0pt, minimum size=.2cm] (3) at (.75,-.75) {};
			\node[fill=BrickRed, label=left:$j_{p_3}$, inner sep = 0pt, minimum size=.2cm] (4) at (-.75,.75) {};
			\node[fill=BrickRed, label=left:$j_{p_2}$, inner sep = 0pt, minimum size=.2cm] (5) at (-.75,-.75) {};
			
			\draw[-, BrickRed] (1) edge[bend left, line width=.12em] node[draw=black,line width=.05em, midway, fill=RoyalBlue, inner sep = 0pt, minimum size = .2cm] (6) {} (2);
			\draw[-, BrickRed] (1) edge[bend right, line width=.12em] node[draw=black,line width=.05em, midway, fill=RoyalBlue, inner sep = 0pt, minimum size = .2cm] (7) {} (3);
			\draw[-, BrickRed] (1) edge[bend right, line width=.12em] node[draw=black,line width=.05em, midway, fill=RoyalBlue, inner sep = 0pt, minimum size = .2cm] (8) {} (4);
			\draw[-, BrickRed] (1) edge[bend left, line width=.12em] node[draw=black,line width=.05em, midway, fill=RoyalBlue, inner sep = 0pt, minimum size = .2cm] (9) {} (5);
			
			\draw[-,RoyalBlue, line width=.12em] (6) edge[bend right=60, line width=.12em] (8);
			\draw[-,RoyalBlue, line width=.12em] (7) edge[bend left=60,line width=.12em] (9);
		\end{tikzpicture}
		\caption{Subleading \emph{blue} identifications around an $i$-identification}\label{fig:crosscycle}
	\end{figure}
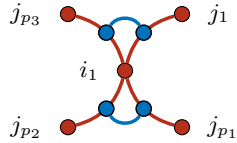

\subsubsection{Contribution from non-admissible graphs}\label{subsec:nonadmi}
Now we estimate the contribution of non-admissible graphs which we denote $E_q^{(\mathrm{NA})}$ . 
Our aim is to show the following Lemma.

\begin{lemma}\label{NAneg}
One has that \begin{equation}\label{eq:constraintqk2}\frac{E_q^{(\mathrm{NA})}}{E'_q(k)}=\mathcal{O}\left(\frac{q^3(1+q^{2k})}{n_0}\right).\end{equation}
\end{lemma}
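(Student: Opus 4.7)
The plan is to adapt the decorated-graph counting of Lemmas \ref{lemma:cycle} and \ref{lemma:GA} to non-admissible red coincidence graphs and to quantify the loss of free indices caused by the failure of the cactus structure. Building a connected coincidence graph from the base $2q$-cycle by $I_i$ $i$-identifications and $I_j$ $j$-identifications always leaves the cyclomatic rank fixed at $I_i+I_j+1$, independently of admissibility. A non-admissible graph is precisely one that is not a cactus, i.e.\ at least one red edge lies in two distinct simple red cycles. I would classify each non-admissible graph by a minimal witness of non-admissibility: a pair of simple cycles sharing at least one red edge together with the offending $(i,j)$-identifications producing that sharing.

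The crucial step is a ``deficit'' argument. Fix $(I_i,I_j)$ and consider the admissible matching recipe from Lemma \ref{lemma:GA}, namely one blue cycle per simple red cycle plus perfect blue matchings inside each niche; its maximal blue count equals $(I_i+I_j+1)+(k-1)q$. Around a red edge lying in two distinct simple cycles, two independent blue cycle-following indices are forced through the same $k$ blue vertices of that niche. To keep the expectation non-zero one must either introduce an extra identification among those blue vertices, costing exactly one factor $n_0^{-1}$, or perform a non-perfect matching inside the offending niche, whose cost is of order $k/n_0$ by the same reasoning as sub-case (b) of the proof of Lemma \ref{lemma:cycle}. In either case the non-admissible graph contributes at most $n_0^{-1}$ times the contribution of its admissible ``shadow'', obtained by splitting the overlap into two disjoint identifications.

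I would then sum over non-admissible topologies. The number of ways of selecting two simple cycles and a shared red edge is at most $\binom{I_i+I_j+1}{2}\cdot 2q=\mathcal{O}(q^3)$ uniformly in $(I_i,I_j)\in[\![0,q]\!]^2$. Inside each offending niche, the induced blue re-matchings and possible high-order moments involve up to $2k$ blue vertices; placing each of these vertices into one of $\mathcal{O}(q)$ equivalence classes and using the almost-sure bound $A$ on the matrix entries together with estimates of the type \eqref{eq:highmom1}--\eqref{eq:highmoment2} gives an aggregate combinatorial overhead of order $1+q^{2k}$. Multiplying the $n_0^{-1}$ deficit by this overhead and summing over $(I_i,I_j)$ produces the claimed bound $\mathcal{O}(q^3(1+q^{2k})/n_0)$.

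The main obstacle is proving the deficit uniformly across all topological types of non-admissibility: two cycles sharing a single edge, two cycles sharing a longer path, three or more cycles with common edges, and combinations with high-multiplicity blue identifications. One must check that in every case at least one factor $n_0^{-1}$ is lost and that the combinatorial overhead remains bounded by $q^3(1+q^{2k})$ even when several overlaps coexist. A careful case analysis refining sub-cases (a)--(d) of the proof of Lemma \ref{lemma:cycle} to accommodate overlapping red substructure should achieve this.
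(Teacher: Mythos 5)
Your proposal follows the same high-level strategy as the paper: every non-admissible graph is obtained from an admissible ``shadow'' by a certain number $r$ of surplus $i$- or $j$-identifications, each surplus unit costs a factor of order $n_0^{-1}$, the topological overhead of choosing which cycles and vertices to glue is $\mathcal{O}(q^3)$ per surplus unit, and a separate mechanism involving up to $2k$ blue vertices inside a niche produces the extra factor $q^{2k}$. That skeleton is exactly what the paper does (via the tree encoding, the gluing operation, \eqref{eq:choicefund}, \eqref{311}, and \eqref{eq:constraintqk}). However, two of your intermediate claims are either unnecessary detours or are not quite right.

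First, your ``deficit'' argument attributes the $n_0^{-1}$ loss to forced blue re-matchings around a red edge lying in two simple cycles. This is not the mechanism the paper uses, and I do not think it is forced. The choice of \emph{fundamental} cycles in a non-admissible red graph is not unique; one can always select a set of fundamental cycles that are pairwise edge-disjoint (as the paper notes and then exploits), in which case no niche is traversed by two blue cycles and no parity obstruction arises. The paper's bookkeeping is more direct: an extra $i$- (resp.\ $j$-) identification reduces the count of free $i$- (resp.\ $j$-) indices by a factor $n_1^{-1}$ (resp.\ $m^{-1}$) without increasing the number of fundamental cycles or the number of free $\ell$-indices, and this alone gives the $n_0^{-1}$ deficit. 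Your alternative via non-perfect matchings inside the offending niche is valid as a fallback but is not the primary effect; presenting it as the primary effect leaves the basic $r=1$ surplus case without a clean justification.

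Second, your derivation of the $q^{2k}$ overhead --- ``placing each of the $2k$ blue vertices into one of $\mathcal{O}(q)$ equivalence classes'' --- is too loose to be checkable, and it is not the bound the paper proves. In the paper the $q^{2k}/n_0$ term arises specifically from the case of a red cycle of length $2$ with multiplicity $p\in\{2,\dots,q\}$: the $pk$ blue vertices in the $2p$ parallel niches can be matched in at most $(Cp)^{kp}$ ways, an extra factor $n_0^{1-p}$ is lost from the $p-1$ surplus identifications, and optimizing over $p$ and over the partition profile $b$ yields $\eqref{eq:constraintqk}$. Your heuristic does not explain why the exponent is $2k$ rather than, say, the Bell-number scale $(2k)^{2k}$, nor why the factor comes with an $n_0^{-1}$. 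Finally, you flag the uniformity across topologies as ``the main obstacle'' and leave it open; the paper closes it precisely by the tree-encoding/gluing parametrization (which covers shared vertices, shared arcs, and multiple edges all at once) together with the $3^r$ bound on the number of ways to traverse the glued graph. In short: your plan is structured correctly, but the two central estimates need to be replaced by the paper's index-counting deficit and the explicit multiple-cycle-of-length-$2$ analysis to become a proof.
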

Once Lemma \ref{NAneg} is proved, this finishes the proof of Theorem \ref{theo:pol} in the case where $f$ is an odd monomial.

\paragraph{Proof of Lemma \ref{NAneg}:}
Let us first come back to admissible graphs. Starting from the origin $i_1$ of an admissible graph $G$, there is a single way to run through the different cycles and return to the origin. Note that all the cycles are oriented e.g. counter clockwise. They are called the \emph{fundamental} cycles: they correspond to the cycles where we perform a matching on the \emph{blue} vertices.
 An admissible graph $G$ can then be (partially) encoded into a rooted tree $T=(V,E)$ as follows. The number of edges of $T$ is the number of fundamental cycles of $G$.
Given the tree $T$, one replaces each edge $e\in E$ with a cycle of length $2L(e)$ with $L(e) \geqslant1$ though one may have to choose the vertices where cycles are glued to each other.


A non-admissible graph is a multigraph $G=(V,E_1,E_2)$ where $E_1$ denotes the set of single edges and $E_2$ is the set of multiple edges. There are first multiple ways to determine the fundamental cycles. Thus, we have to count the number of non-admissible graphs labeled by their fundamental cycles (see Figure \ref{fig:glue} for an illustration).
There may be also multiple ways to go through the whole graph: we explain later how this can be counted thanks to the associated admissible graph.

Our aim is to obtain all the non admissible graphs from the set of admissible ones by adding $i-$ and $j$- identifications. This will determine the fundamental cycles and there just remains to count the number of ways to run through the graph.
Consider the tree $T$ encoding an admissible graph, we can then choose two edges and \emph{glue} them together in the sense of identifying one vertex of one edge to one of the other. This adds an identification in the initial graph and encodes a non-admissible graph. Now, while these two cycles (corresponding to these two edges) are identified at an additional vertex, there could be more identifications for the same two cycles by choosing additional vertices in each cycle to be again identified. So finally doing this step a single time, the number of possible ways to choose two cycles which are then identified at $r$ pairs of vertices is at most:
\begin{equation}\label{eq:choicefund}
C\binom{E}{2}\left(
	2q
\right)^r\leqslant {q^{r+2}},
\end{equation}   
since we need to choose two edges and then two vertices. And the number of possible ways to go through the whole graph is then multiplied by a factor at most $3^r$. 
Indeed one needs to see the fundamental cycles in the order they are initially numbered on the admissible graph. The moments of time where one can make a choice when running through the graph correspond to the vertices of degree greater than $2$. Suppose one lies on cycle $i$ (for some integer $i$) and meets a vertex of degree at least $3$. Then there are multiple possible ways to continue the walk on the graph iff this vertex belongs to cycle $i+1$ or $i-1$. Indeed because the fundamental cycles are oriented the first moment one jumps from a cycle to discover a new one is determined.\\
However, one loses a power of $n_1$ (or $m$) for each additional identification as we lose a choice of index without gaining a cycle. Assume that $q^3\ll n_0$, 
and denote by $E_q^{(\mathrm{NA},r)}$ the contribution of non admissible graphs with no cross matchings (apart from the cycle inside each red cycle).
Then one has that 
\begin{equation}\label{311}E_q^{(\mathrm{NA},r)}= \sum_r \left (\frac{q^3 3}{n_0}\right)^r E'_q(k)<<E'_q(k).\end{equation}
Hereabove $r$ denotes the total number of additionnal identifications (that is the surplus $s(G)$ of the non admissible graph).


\begin{figure}[!ht]
	\centering
	\begin{tikzpicture}
		\node[fill=BrickRed, inner sep = 0pt, minimum size = .2cm] (1) at (0,0) {};
		\node[fill=BrickRed, inner sep = 0pt, minimum size = .2cm] (2) at (1,0) {};
		\node[fill=BrickRed, inner sep = 0pt, minimum size = .2cm] (3) at (2,0) {};
		\node[fill=BrickRed, inner sep = 0pt, minimum size = .2cm] (4) at (2.7,.7) {};
		\node[fill=BrickRed, inner sep = 0pt, minimum size = .2cm] (5) at (2.7,-.7) {};
		
		\draw[-,BrickRed, line width=.12em] (1) edge[bend left=30] (2);
		\draw[-,BrickRed, line width=.12em] (1) edge[bend right=30] (2);
		\draw[-,BrickRed, line width=.12em] (2) edge[bend left=30] (3);
		\draw[-,BrickRed, line width=.12em] (2) edge[bend right=30] (3);
		\draw[-,BrickRed, line width=.12em] (3) edge[bend left=30] (4);
		\draw[-,BrickRed, line width=.12em] (3) edge[bend right=30] (4);
		\draw[-,BrickRed, line width=.12em] (3) edge[bend left=30] (5);
		\draw[-,BrickRed, line width=.12em] (3) edge[bend right=30] (5);
		
		\draw[-,dashed,Black, line width=.15em] (1) edge (2);
		\draw[-,Black, line width=.15em] (2) edge (3);
		\draw[-,Black, line width=.15em] (3) edge (4);
		\draw[-,dashed, line width=.15em] (3) edge (5);

		\draw[->,>=stealth, line width=.12em] (3,0) -- node[draw=none, pos=.5, above=-.5cm] {Gluing} (4.5,0);
		\begin{scope}[xshift=5cm]
			\node[fill=BrickRed, inner sep = 0pt, minimum size = .2cm] (01) at (0,0) {};
			\node[fill=BrickRed, inner sep = 0pt, minimum size = .2cm] (02) at (1,0) {};
			\node[fill=BrickRed, inner sep = 0pt, minimum size = .2cm] (03) at (1.7,.7) {};
			\node[fill=BrickRed, inner sep = 0pt, minimum size = .2cm] (04) at (1.7,-.7) {};
		\draw[->,>=stealth, line width=.12em] (2,0) -- node[draw=none, pos=.5, above=-.4cm] {$r=3$} (3.5,0);

		\end{scope}
		
		\draw[-,dashed,Black, line width=.15em] (01) edge (04);
		\draw[-,Black, line width=.15em] (01) edge (02);
		\draw[-,Black, line width=.15em] (02) edge (03);
		\draw[-,dashed, line width=.15em] (02) edge (04);
		
		\begin{scope}[xshift=9cm]
			\node[fill=BrickRed, inner sep = 0pt, minimum size = .2cm] (001) at (0,0) {};
			\node[fill=BrickRed, inner sep = 0pt, minimum size = .2cm] (002) at (1,0) {};
			\node[fill=BrickRed, inner sep = 0pt, minimum size = .2cm] (003) at (1.7,.7) {};
			\node[fill=BrickRed, inner sep = 0pt, minimum size = .2cm] (004) at (1.7,-1) {};
		\end{scope}
		
		\draw[-,BrickRed, line width=.12em] (001) edge[bend left=30] (002);
		\draw[-,BrickRed, line width=.12em] (001) edge[bend right=30] (002);
		\draw[-,BrickRed, line width=.12em] (002) edge[bend left=30] (003);
		\draw[-,BrickRed, line width=.12em] (002) edge[bend right=30] (003);
		
		\draw[-,BrickRed, line width=.12em] (002) edge[bend left=60] (004);
		\draw[-, BrickRed, line width=.12em] (002)  edge[bend right=60] node[draw=none, pos=.25] (005) {} node[draw=none, pos=.75] (006) {} (004);
		
		\coordinate[shift={(-2.2mm,-2.2mm)}] (test) at (006);

		\draw[-,BrickRed, line width=.12em] (001) edge[bend right=30] (005.center);
		\draw[-,BrickRed, line width=.12em] (005.center) edge[bend right=80] (006.center);
		\draw[-,BrickRed, line width=.12em] (006.center) edge[bend right=80] (004);
		\draw[-,BrickRed, line width=.12em] (004) edge[out = 270, in = 270] (001);
		
		\draw[-,dashed,Black, line width=.15em] (001) edge[out = 290, in = 150] (test.center);
		\draw[-,dashed,Black, line width=.15em] (test.center) edge[out = 330, in = 250] (004);
		\draw[-,Black, line width=.15em] (001) edge (002);
		\draw[-,Black, line width=.15em] (002) edge (003);
		\draw[-,dashed, line width=.15em] (002) edge (004);
	\end{tikzpicture}
	\caption{The left picture is an admissible graph with its encoding tree. The two dashed lines correspond to the two edges we glue together. The second graph correspond to the glued tree where there is now a cycle. Now the last step consists in choosing the number of identifications:  here we have three total identifications between the two cycles.}
	\label{fig:glue}
\end{figure}
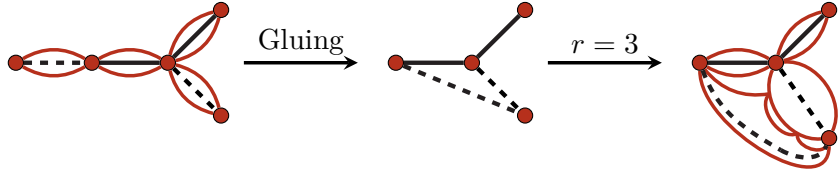

Now, once the fundamental cycles are identified, cross identifications between \emph{blue} edges from distinct niches (or fundamental cycles) are subleading unless in the following case: there are multiple cycles of length 2. Consider  a cycle of length 2, with multiplicity $p$. Then $pk$ \emph{blue} vertices have to be matched. While the leading order is given by performing a perfect matching between these vertices such as in Figure \ref{fig:multsingedge}, we can do any kind of matching and use the similar analysis we did for \eqref{eq:contribmatching}. Suppose that we have an identification between $a_1,\dotsc,a_b$ entries such that $a_1+\dotsb+a_b=pk$. For ease we suppose that $a_1=\dots=a_{b_1}=2$ and $a_{b_1+1},\dots,a_b>2$ for some $b_1\in[\![1,b-1]\!],$ then we can compare their contribution to that of the admissible graph (used to encode it) by
\begin{equation}\label{eq:highmom3}
\sum_{p=2}^q\sum_{b=1}^{\frac{m(e)k}{2}}
\frac{1}{n_0^{p-1}}
\sum_{b_1=1}^b
\sum_{\substack{a_{b_1+1},\dots,a_b>2\\\sum a_i=pk-2b_i}}
\frac{(pk)!}{((2k)!!)^{p/2}b!\prod_{i=b_1+1}^ba_i}
\frac{n_0^{b}\prod_{i=b_1+1}^b \mathds{E}\vert W_{11}\vert^{a_i}\mathds{E}\vert X_{11}\vert^{a_i}}{n_0^{kp/2}(\sigma_w^2\sigma_x^2)^{kp/2-b}}
\end{equation}

The factor of $n_0^{1-p}$ comes from the additional identifications between $i$'s and $j$'s in order to obtain a multiple edge. For instance in Figure \ref{fig:multsingedge} there are less identifications in the admissible graph than in the corresponding non-admissible graph. The first term in the summand compares the number of possible matchings of the $pk$ edges to that of a perfect matching in every single cycle. There exists a constant $C>0$ such that
\[
\frac{(pk)!}{((2k)!!)^{p/2}b!\prod_{i=b_1+1}^ba_i}\leqslant (Cp)^{kp}.
\]
The second term now comes from the number of $\ell$ indices chosen and the ratio of moments and we bound it in the same way as in \eqref{eq:contribmatching},
\[
\frac{\prod_{b_1+1}^b\mathds{E}\vert W_{11}\vert^{a_p}\mathds{E}\vert X_{11}\vert^{a_p}}{n_0^{kp/2-b}(\sigma_w\sigma_x)^{2(kp/2-b)}}
\leqslant
\frac{A^{2\sum_{i\geqslant b_1+1}a_i}}{n_0^{kp/2-b}\prod_{i\geqslant b_1+1}\sigma_w^{a_i}\sigma_x^{a_i}}
=
\left(
	\frac{A^4}{n_0\sigma_w^2\sigma_x^2}
\right)^{\frac{kp}{2}-b}.
\] 
Also in the same way as in \eqref{eq:contribmatching}, we can bound the combinatorial factor coming from the sums as 
\[
\sum_{b_1=1}^b
\sum_{\substack{a_{b_1+1},\dots,a_b>2\\\sum a_i=pk-2b_i}}
\leqslant 
(pk)^{2(\frac{pk}{2}-b)}.
\]
Finally, putting all the contribution together we have
\begin{equation}\label{eq:constraintqk}
n_0
\sum_{p=2}^q
\left(
	\frac{Cp^k}{n_0}
\right)^{p}
\sum_{b=1}^{pk/2}
\left(
	\frac{A^4p^2k^2}{n_0\sigma_w^2\sigma_x^2}
\right)^{\frac{kp}{2}-b}
=
\mathcal{O}\left(
	\frac{q^{2k}}{n_0}
\right),
\end{equation}
where we used the fact that the leading order comes from the case where $b=\frac{kp}{2}$.
Actually \eqref{eq:constraintqk} can be improved to $O (\frac{q^{kj}}{n_0^{j-1}})$ for any integer $1<j<q$.
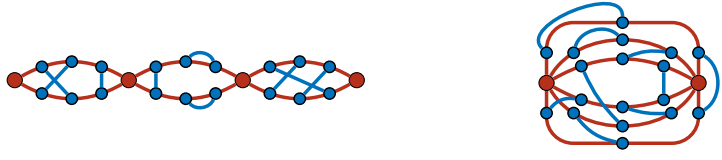
\begin{figure}[!ht]
\centering
	\begin{subfigure}{.33\linewidth}
	\centering
		\begin{tikzpicture}
		\node[fill=BrickRed, inner sep = 0pt, minimum size=.2cm] (1) at (0,0) {};
			\node[fill=BrickRed, inner sep = 0pt, minimum size=.2cm] (2) at (1.5,0) {};
			\draw[-, BrickRed, line width=.12em] (1) edge[bend left]  node[draw=black,line width=.05em,pos=.20, fill=RoyalBlue, inner sep = 0pt, minimum size = .15cm] (3) {} node[draw=black,line width=.05em,midway, fill=RoyalBlue, inner sep = 0pt, minimum size = .15cm] (4) {} node[draw=black,line width=.05em,pos=.80, fill=RoyalBlue, inner sep = 0pt, minimum size = .15cm] (5) {} (2) ;
			\draw[-, BrickRed, line width=.12em] (1) edge[bend right]  node[draw=black,line width=.05em,pos=.20, fill=RoyalBlue, inner sep = 0pt, minimum size = .15cm] (6) {} node[draw=black,line width=.05em,midway, fill=RoyalBlue, inner sep = 0pt, minimum size = .15cm] (7) {} node[draw=black,line width=.05em,pos=.80, fill=RoyalBlue, inner sep = 0pt, minimum size = .15cm] (8) {} (2) ;
			\draw[-, RoyalBlue, line width=.12em] (3) edge  (7);
			\draw[-, RoyalBlue, line width=.12em] (4) edge  (6);
			\draw[-, RoyalBlue, line width=.12em] (5) edge  (8);

		\node[fill=BrickRed, inner sep = 0pt, minimum size=.2cm] (22) at (3,0) {};
		\draw[-, BrickRed, line width=.12em] (2) edge[bend left]  node[draw=black,line width=.05em,pos=.20, fill=RoyalBlue, inner sep = 0pt, minimum size = .15cm] (30) {} node[draw=black,line width=.05em,midway, fill=RoyalBlue, inner sep = 0pt, minimum size = .15cm] (40) {} node[draw=black,line width=.05em,pos=.80, fill=RoyalBlue, inner sep = 0pt, minimum size = .15cm] (50) {} (22) ;
		\draw[-, BrickRed, line width=.12em] (2) edge[bend right]  node[draw=black,line width=.05em,pos=.20, fill=RoyalBlue, inner sep = 0pt, minimum size = .15cm] (60) {} node[draw=black,line width=.05em,midway, fill=RoyalBlue, inner sep = 0pt, minimum size = .15cm] (70) {} node[draw=black,line width=.05em,pos=.80, fill=RoyalBlue, inner sep = 0pt, minimum size = .15cm] (80) {} (22) ;
		\draw[-, RoyalBlue, line width=.12em] (30) edge  (60);
		\draw[-, RoyalBlue, line width=.12em] (40) edge[bend left=60]  (50);
		\draw[-, RoyalBlue, line width=.12em] (70) edge[bend right=60]  (80);
		
		\node[fill=BrickRed, inner sep = 0pt, minimum size=.2cm] (222) at (4.5,0) {};
\draw[-, BrickRed, line width=.12em] (22) edge[bend left]  node[draw=black,line width=.05em,pos=.20, fill=RoyalBlue, inner sep = 0pt, minimum size = .15cm] (300) {} node[draw=black,line width=.05em,midway, fill=RoyalBlue, inner sep = 0pt, minimum size = .15cm] (400) {} node[draw=black,line width=.05em,pos=.80, fill=RoyalBlue, inner sep = 0pt, minimum size = .15cm] (500) {} (222) ;
		\draw[-, BrickRed, line width=.12em] (22) edge[bend right]  node[draw=black,line width=.05em,pos=.20, fill=RoyalBlue, inner sep = 0pt, minimum size = .15cm] (600) {} node[draw=black,line width=.05em,midway, fill=RoyalBlue, inner sep = 0pt, minimum size = .15cm] (700) {} node[draw=black,line width=.05em,pos=.80, fill=RoyalBlue, inner sep = 0pt, minimum size = .15cm] (800) {} (222) ;
		\draw[-, RoyalBlue, line width=.12em] (300) edge  (800);
		\draw[-, RoyalBlue, line width=.12em] (400) edge  (600);
		\draw[-, RoyalBlue, line width=.12em] (500) edge  (700);
		
		\node[draw=none] (105) at (1,-1) {};
		\node[draw=none] (105) at (1,1) {};
		\end{tikzpicture}
	\end{subfigure}
	\begin{subfigure}{.33\linewidth}
	\centering
	\begin{tikzpicture}
		\node[fill=BrickRed, inner sep = 0pt, minimum size=.2cm] (1) at (0,0) {};
		\node[fill=BrickRed, inner sep = 0pt, minimum size=.2cm] (2) at (2,0) {};
		
		\draw[-, BrickRed] (1) edge[bend left, line width=.12em] node[draw=black,line width=.05em, pos=.20, fill=RoyalBlue, inner sep = 0pt, minimum size = .15cm] (30) {} node[draw=black,line width=.05em, midway, fill=RoyalBlue, inner sep = 0pt, minimum size = .15cm] (40) {} node[draw=black,line width=.05em,pos= .80, fill=RoyalBlue, inner sep = 0pt, minimum size = .15cm] (50) {} (2);
		
		\draw[-, BrickRed] (1) edge[bend left=60, line width=.12em] node[draw=black,line width=.05em, pos=.20, fill=RoyalBlue, inner sep = 0pt, minimum size = .15cm] (60) {} node[draw=black,line width=.05em, midway, fill=RoyalBlue, inner sep = 0pt, minimum size = .15cm] (70) {} node[draw=black,line width=.05em,pos= .80, fill=RoyalBlue, inner sep = 0pt, minimum size = .15cm] (80) {} (2);
		\node[draw=none] (3) at (1,.5) {};
		\coordinate[shift={(0mm,3mm)}] (n) at (3);
		\draw[-, BrickRed, rounded corners= 10pt, line width=.12em] (1) |- (n) -| (2);

		\node[draw=black,line width=.05em, fill=RoyalBlue, inner sep = 0pt, minimum size = .15cm] (90) at (0,.4) {};
		\node[draw=black,line width=.05em, fill=RoyalBlue, inner sep = 0pt, minimum size = .15cm] (110) at (2,.4) {};
		
		\node[draw=black,line width=.05em, fill=RoyalBlue, inner sep = 0pt, minimum size = .15cm] (100) at (1,.8) {};

		\draw[-, BrickRed] (1) edge[bend right, line width=.12em] node[draw=black,line width=.05em, pos=.20, fill=RoyalBlue, inner sep = 0pt, minimum size = .15cm] (35) {} node[draw=black,line width=.05em, midway, fill=RoyalBlue, inner sep = 0pt, minimum size = .15cm] (45) {} node[draw=black,line width=.05em,pos= .80, fill=RoyalBlue, inner sep = 0pt, minimum size = .15cm] (55) {} (2);
		\draw[-, BrickRed] (1) edge[bend right=60, line width=.12em] node[draw=black,line width=.05em, pos=.20, fill=RoyalBlue, inner sep = 0pt, minimum size = .15cm] (65) {} node[draw=black,line width=.05em, midway, fill=RoyalBlue, inner sep = 0pt, minimum size = .15cm] (75) {} node[draw=black,line width=.05em,pos= .80, fill=RoyalBlue, inner sep = 0pt, minimum size = .15cm] (85) {} (2);
		\node[draw=none] (4) at (1,-.5) {};
		\coordinate[shift={(0mm,-3mm)}] (m) at (4);
		\draw[-, BrickRed, rounded corners= 10pt, line width=.12em] (1) |- (m) -| (2);
		
		\node[draw=black,line width=.05em, fill=RoyalBlue, inner sep = 0pt, minimum size = .15cm] (95) at (0,-.4) {};
		\node[draw=black,line width=.05em, fill=RoyalBlue, inner sep = 0pt, minimum size = .15cm] (115) at (2,-.4) {};
		\node[draw=black,line width=.05em, fill=RoyalBlue, inner sep = 0pt, minimum size = .15cm] (105) at (1,-.8) {};

		\draw[-,RoyalBlue, line width=.08em] (95) edge[bend left, line width=.12em] (35);
		\draw[-,RoyalBlue, line width=.08em] (65) edge[bend right=20, line width=.12em] (105);
		\draw[-,RoyalBlue, line width=.08em] (75) edge[bend left=10, line width=.12em] (30);
		\draw[-,RoyalBlue, line width=.08em] (45) edge[bend right=10, line width=.12em] (85);
		\draw[-,RoyalBlue, line width=.08em] (55) edge[line width=.12em] (50);
		\draw[-,RoyalBlue, line width=.08em] (115) edge[bend right=60, line width=.12em] (110);
		\draw[-,RoyalBlue, line width=.08em] (90) edge[out=135, in=90, line width=.12em] (100);
		\draw[-,RoyalBlue, line width=.08em] (60) edge[bend left=60, line width=.12em] (70);
		\draw[-,RoyalBlue, line width=.08em] (40) edge[bend left=15, line width=.12em] (80);

\node[draw=none] (105) at (1,-1.1) {};
		\node[draw=none] (105) at (1,1.1) {};
	\end{tikzpicture}
	\end{subfigure}
	\caption{Different behavior between an admissible graph and a multiple edge.}
	\label{fig:multsingedge}
\end{figure}

Thus, combining \eqref{311}, \eqref{eq:choicefund} and \eqref{eq:constraintqk} finishes the proof of Lemma \ref{NAneg}.  \hfill $\square$

\subsection{Proof of Theorem \ref{theo:pol} when \texorpdfstring{$\bm{f}$}{} is a monomial of even degree: }\label{subsec:even}
In the case of an even monomial we center the function $f$, to do so we substract a constant given by the corresponding expectation. We then consider centered monomial of the form
\[
f(x)=\frac{x^k-k!!}{k!},\: k=2p \quad\text{ so that}\quad \theta_1(f)=\frac{(\sigma_w\sigma_x)^{2k}}{(k!)^2}\left(
	(2k)!!-(k!!)^2
\right)\quad\text{and}\quad \theta_2(f)=0.
\]
Here, the fact that $\theta_2(f)$ vanishes means that all admissible graphs which have at least one cycle of size greater than 2 are subleading so that we see admissible graphs consisting only in cycles of size 2 such as Figure \ref{fig:admi2} for instance. 
Note that we have seen earlier that we can write
\[
\mathds{E}\left[
	\frac{1}{k!}\left(
		\frac{(WX)_{ij}}{\sqrt{n_0}}
	\right)^k
\right]
=
\frac{1}{n_0^{k/2}k!}\mathds{E}\sum_{\ell_1,\dots,\ell_k=1}^{n_0}\prod_{p=1}^kW_{i\ell_p}X_{\ell_pj}=\frac{k!!}{k!}\left(
	\sigma_w\sigma_x
\right)^k\left(1+\mathcal{O}\left(\frac{1}{n_0}\right)\right).
\]
Thus, by developing the tracial moments of $M$ we obtain the following formula,
\begin{multline}\label{eq:moment2}
\frac{1}{n_1}\mathds{E}\left[\Tr M^q\right]
=
\left(1+\mathcal{O}\left(\frac{1}{n_0}\right)\right)
\frac{1}{n_1m^q}
\mathds{E}
\sum_{i_1,\dots,i_q}^{n_1}
\sum_{j_1,\dots,j_q}^{m}
\left[
	\frac{1}{n_0^{kq}(k!)^{2q}}\hspace{-.8em}
	\sum_{\substack{\ell_1^1,\dots\ell^1_k\\\dots\\\ell^{2q}_1\dots\ell^{2q}_k}}^{n_0}
	\prod_{p=1}^kW_{i_1\ell^1_p}X_{\ell_p^1j_1}\times\right.\\
	\times\left.\prod_{p=1}^kW_{i_2\ell^2_p}X_{\ell_p^2j_1}
	\dots
	\prod_{p=1}^kW_{i_1\ell^{2q}_p}X_{\ell_p^{2q}j_q}
	-c_0^{2q}
\right].
\end{multline}
Now it is not difficult to check that
\[
c_0^{2q}=
\left(1+\mathcal{O}\left(\frac{1}{n_0}\right)\right)\mathds{E}
\sum_{\ell_1^1,\dots\ell^1_k\\\dots\\\ell^{2q}_1\dots\ell^{2q}_k **}^{n_0}
\prod_{p=1}^kW_{i_1\ell^1_p}X_{\ell_p^1j_1}
\prod_{p=1}^kW_{i_2\ell^2_p}X_{\ell_p^2j_1}
\dots
\prod_{p=1}^kW_{i_1\ell^{2q}_p}X_{\ell_p^{2q}j_q},
\]
where the $**$ means that the $\ell$indices are matched according to a perfect matching inside each niche. Thus the centering by $c_0$ corresponds to the contribution of the admissible graphs where \emph{blue} vertices make a perfect matching inside each niche.

	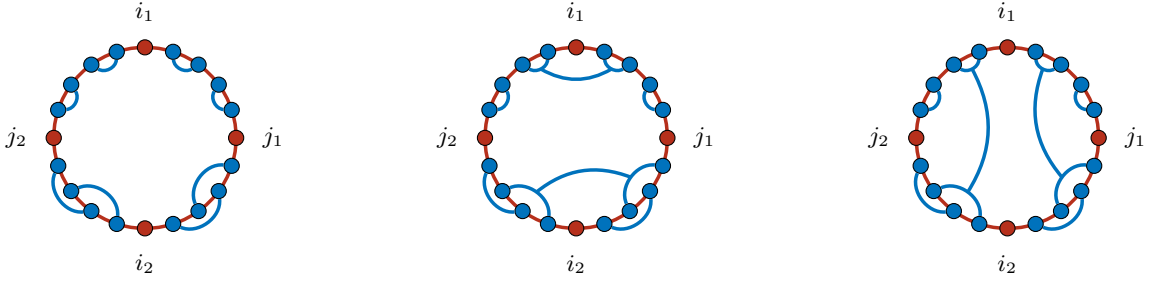
\begin{figure}[!ht]
	\begin{subfigure}{.33\linewidth}
		\centering
		\begin{tikzpicture}
		\draw[BrickRed, line width=.12em] (0,0) circle (1.2cm);
		\node[fill=BrickRed, label=$i_{1}$, inner sep = 0pt, minimum size=.2cm] (40) at (360/4: 1.2cm) {};
		\node[fill=BrickRed, label=left:$j_{2}$, inner sep = 0pt, minimum size=.2cm] (41) at (2*360/4: 1.2cm) {};
		\node[fill=BrickRed, label=below:$i_{2}$, inner sep = 0pt, minimum size=.2cm] at (3*360/4: 1.2cm) {};
		\node[fill=BrickRed, label=right:$j_{1}$, inner sep = 0pt, minimum size=.2cm] at (4*360/4: 1.2cm) {};
		\foreach \b in {1,2,...,4}{
			\foreach \a in {1,2,...,4}{
			\node[fill=RoyalBlue, inner sep = 0pt, minimum size=.2cm] (\b+\a) at (\b*360/4+\a*360/20: 1.2cm) {};
			}
		}
		
		\draw[-, RoyalBlue] (1+3) edge[bend left=60, line width=.12em] (1+4);
		\draw[-, RoyalBlue] (4+3) edge[bend left=60, line width=.12em] (4+4);
		\draw[-, RoyalBlue] (3+2) edge[bend left=60, line width=.12em] (3+4);
		\draw[-, RoyalBlue] (2+3) edge[bend left=60, line width=.12em] (2+1);

		\draw[-, RoyalBlue] (1+1) edge[bend left=60, line width=.12em] (1+2);
		\draw[-, RoyalBlue] (4+1) edge[bend left=60, line width=.12em] (4+2);
		\draw[-, RoyalBlue] (3+1) edge[bend right=60, line width=.12em] (3+3);
		\draw[-, RoyalBlue] (2+4) edge[bend right=60, line width=.12em] (2+2);
	\end{tikzpicture}
	\label{fig:leadevenbef}
	\end{subfigure} 
	\begin{subfigure}{.33\linewidth}
		\centering
		\begin{tikzpicture}
		\draw[BrickRed, line width=.12em] (0,0) circle (1.2cm);
		\node[fill=BrickRed, label=$i_{1}$, inner sep = 0pt, minimum size=.2cm] (40) at (360/4: 1.2cm) {};
		\node[fill=BrickRed, label=left:$j_{2}$, inner sep = 0pt, minimum size=.2cm] (41) at (2*360/4: 1.2cm) {};
		\node[fill=BrickRed, label=below:$i_{2}$, inner sep = 0pt, minimum size=.2cm] at (3*360/4: 1.2cm) {};
		\node[fill=BrickRed, label=right:$j_{1}$, inner sep = 0pt, minimum size=.2cm] at (4*360/4: 1.2cm) {};
		\foreach \b in {1,2,...,4}{
			\foreach \a in {1,2,...,4}{
			\node[fill=RoyalBlue, inner sep = 0pt, minimum size=.2cm] (\b+\a) at (\b*360/4+\a*360/20: 1.2cm) {};
			}
		}
		
		\draw[-, RoyalBlue] (1+3) edge[bend left=60, line width=.12em] (1+4);
		\draw[-, RoyalBlue] (4+3) edge[bend left=60, line width=.12em] node[draw=none, midway, inner sep = 0pt, minimum size = .2cm] (b) {} (4+4);
		\draw[-, RoyalBlue] (3+2) edge[bend left=60, line width=.12em] node[draw=none, midway, inner sep = 0pt, minimum size = .2cm] (d) {} (3+4);
		\draw[-, RoyalBlue] (2+3) edge[bend left=60, line width=.12em] (2+1);

		\draw[-, RoyalBlue] (1+1) edge[bend left=60, line width=.12em] node[draw=none, midway, inner sep = 0pt, minimum size = .2cm] (a) {} (1+2);
		\draw[-, RoyalBlue] (4+1) edge[bend left=60, line width=.12em] (4+2);
		\draw[-, RoyalBlue] (3+1) edge[bend right=60, line width=.12em] (3+3);
		\draw[-, RoyalBlue] (2+4) edge[bend right=60, line width=.12em] node[draw=none, midway, inner sep = 0pt, minimum size = .2cm] (c) {} (2+2);
		\draw[-, RoyalBlue] (a.center) edge[bend right, line width=.12em] (b.center);
		\draw[-, RoyalBlue] (c.center) edge[bend left, line width=.12em] (d.center);
		
%
%
%

		\end{tikzpicture}
		\label{fig:leadeven2}
	\end{subfigure}
		\begin{subfigure}{.33\linewidth}
		\centering
		\begin{tikzpicture}
		\draw[BrickRed, line width=.12em] (0,0) circle (1.2cm);
		\node[fill=BrickRed, label=$i_{1}$, inner sep = 0pt, minimum size=.2cm] (40) at (360/4: 1.2cm) {};
		\node[fill=BrickRed, label=left:$j_{2}$, inner sep = 0pt, minimum size=.2cm] (41) at (2*360/4: 1.2cm) {};
		\node[fill=BrickRed, label=below:$i_{2}$, inner sep = 0pt, minimum size=.2cm] at (3*360/4: 1.2cm) {};
		\node[fill=BrickRed, label=right:$j_{1}$, inner sep = 0pt, minimum size=.2cm] at (4*360/4: 1.2cm) {};
		\foreach \b in {1,2,...,4}{
			\foreach \a in {1,2,...,4}{
			\node[fill=RoyalBlue, inner sep = 0pt, minimum size=.2cm] (\b+\a) at (\b*360/4+\a*360/20: 1.2cm) {};
			}
		}
		
		\draw[-, RoyalBlue] (1+3) edge[bend left=60, line width=.12em] (1+4);
		\draw[-, RoyalBlue] (4+3) edge[bend left=60, line width=.12em] node[draw=none, midway, inner sep = 0pt, minimum size = .2cm] (b) {} (4+4);
		\draw[-, RoyalBlue] (3+2) edge[bend left=60, line width=.12em] node[draw=none, midway, inner sep = 0pt, minimum size = .2cm] (d) {} (3+4);
		\draw[-, RoyalBlue] (2+3) edge[bend left=60, line width=.12em] (2+1);

		\draw[-, RoyalBlue] (1+1) edge[bend left=60, line width=.12em] node[draw=none, midway, inner sep = 0pt, minimum size = .2cm] (a) {} (1+2);
		\draw[-, RoyalBlue] (4+1) edge[bend left=60, line width=.12em] (4+2);
		\draw[-, RoyalBlue] (3+1) edge[bend right=60, line width=.12em] (3+3);
		\draw[-, RoyalBlue] (2+4) edge[bend right=60, line width=.12em] node[draw=none, midway, inner sep = 0pt, minimum size = .2cm] (c) {} (2+2);
		\draw[-, RoyalBlue] (a.center) edge[bend left, line width=.12em] (c.center);
		\draw[-, RoyalBlue] (d.center) edge[bend left, line width=.12em] (b.center);
		
%
%
%

		\end{tikzpicture}
		\label{fig:leadeven3}
	\end{subfigure}
	\caption{The left figure corresponds to the leading order before centering while the two others illustrate leading order graphs after centering. The center figure involves $\mathds{E}W_{11}^4$ while the right one involves $\mathds{E}X_{11}^4$.}
		\label{fig:leadeven}	
	\end{figure}

After centering, the typical graphs may be those which have additional identifications between niches which have a common \emph{red} vertex as in Figure \ref{fig:leadeven}.
We first consider the contribution of one \emph{red} cycle to the moments and then deduce the contribution of all admissible graphs. 
One can see that to maximize the number of possible choices of \emph{blue} indices, we can first perform a perfect matching into each niche as before the centering, then we can choose either the $i$-vertices or the $j$-vertices and add identifications around the corresponding niches. This prevents having a perfect matching inside any niche (which is forbidden by the centering) but still the gives the maximal number of \emph{blue} indices. With such a matching, moments of order 4 arise in the contribution and we obtain:
\begin{multline*}
{E}_{q,1}(k)=\frac{1}{n_1(k!)^{2q}}\psi^{-q}\left(\frac{k}{2}(k!!)\right)^{2q}\left(\sigma_w^{2q(k-1)}\sigma_x^{2kq}(\mathds{E}{W_{11}^4})^q+\sigma_w^{2qk}\sigma_x^{2q(k-1)}(\mathds{E}{X_{11}^4})^q\right)+o\left(\frac{{\theta_3}(f)}{n_1}\right)
\\=\frac{1}{n_1}
\left[
	\theta_3(f)
	\left(
		\frac{\mathds{E}W_{11}^4}{\sigma_w^4}
		+
		\frac{\mathds{E}X_{11}^4}{\sigma_x^4}
	\right)
\right]^q\psi^{-q}+o\left(\frac{\theta_3(f)}{n_1}\right)\\
\end{multline*}
where we defined
\[
\theta_3(f)
=
\left(
	\frac{(\sigma_w\sigma_x)^2}{2}
	\int
	f''(\sigma_w\sigma_xx)\frac{e^{-x^2/2}}{\sqrt{2\pi}}\D x
\right)^2.
\]


Note that the contribution is of order $n_1^{-1}$ and thus is negligible compared with the contribution from odd polynomials. For the number of distinct indices we obtain $n_1^{(k-1)q}$. We could try to instead create cycles between niches as for the odd polynomial case, but one can see that we would need to create two cycles instead of one and would obtain $(k-2)q+2$ distinc indices which is of lower order.  Now if we only create one cycle, we need to perform at least identifications between three vertices in each niche since we would have an odd number of \emph{blue} vertices left and the number of distinct indices becomes at most $(k-4)q+2q+1$ which is also of lower order than Figure \ref{fig:leadeven}.
If, instead of identifying between different niches we would identify \emph{blue} vertices inside the same niche we can only obtain at most $(k-4)q+2q$ distinct indices which is of lower order than Figure \ref{fig:leadeven}.

Now, in the same way, the case of a \emph{simple cycle} (i.e. with length 2 ) is slightly different due to the centering. Indeed, at least one (thus two) vertices has to be connected to some other niche. Note also that any perfect matching where the two niches are connected is of the same order, thus we obtain for the leading order
\[
{E}_1(k)=\frac{\left(\sigma_w\sigma_x\right)^{2k}}{(k!)^2}\left((2k)!!-(k!!)^2\right)
+
\mathcal{O}\left(
	\frac{(2k-2)!!k^2}{(k!)^2n_0}
\right)=\theta_1(f)
+\mathcal{O}\left(
	\frac{(2k-2)!!k^2}{(k!)^2n_0}
\right).
\]
The above formula is self explanatory.
	\begin{figure}[!ht]
		\centering
		\begin{tikzpicture}
			\node[fill=BrickRed, label=$i_{1}$, inner sep = 0pt, minimum size=.2cm] (1) at (0,0) {};
			\node[fill=BrickRed, label=$j_{1}$, inner sep = 0pt, minimum size=.2cm] (2) at (2,0) {};
			\draw[-, BrickRed, line width=.12em] (1) edge[bend left]  node[draw=black,line width=.05em,pos=.15, fill=RoyalBlue, inner sep = 0pt, minimum size = .2cm] (3) {} node[draw=black,line width=.05em,pos=.37, fill=RoyalBlue, inner sep = 0pt, minimum size = .2cm] (4) {} node[draw=black,line width=.05em,pos=.62, fill=RoyalBlue, inner sep = 0pt, minimum size = .2cm] (5) {} node[draw=black,line width=.05em,pos=.85, fill=RoyalBlue, inner sep = 0pt, minimum size = .2cm] (6) {} (2) ;
			\draw[-, BrickRed, line width=.12em] (1) edge[bend right]  node[draw=black,line width=.05em,pos=.15, fill=RoyalBlue, inner sep = 0pt, minimum size = .2cm] (7) {} node[draw=black,line width=.05em,pos=.32, fill=RoyalBlue, inner sep = 0pt, minimum size = .2cm] (8) {} node[draw=black,line width=.05em,pos=.62, fill=RoyalBlue, inner sep = 0pt, minimum size = .2cm] (9) {} node[draw=black,line width=.05em,pos=.85, fill=RoyalBlue, inner sep = 0pt, minimum size = .2cm] (10) {} (2) ;
			\draw[-, RoyalBlue, line width=.12em] (3) edge  (7);
			\draw[-, RoyalBlue, line width=.12em] (5) edge  (10);
			\draw[-, RoyalBlue, line width=.12em] (4) edge[bend left=60]  (6);
			\draw[-, RoyalBlue, line width=.12em] (8) edge[bend right=60]  (9);

		\end{tikzpicture}
		\caption{Contribution in the case $q=1$ for an even monomial.}
		\label{fig:q1even}
	\end{figure}
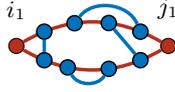

For the general case of admissible graphs with possible identifications, we use the fact that the contribution is just a product over the different cycles. For simplicity, we suppose that we have $\mathds{E} W_{11}^4\sigma_x^4=\mathds{E} X_{11}^4\sigma_w^4$. Since the contribution of the cycles of length greater than 2 are $O(n_1^{-1})$, the terms involving the $4$-th moments of the entries of $X$ and $W$ (which are not $n_1$-dependent) are subleading. Thus, this condition does not impact the overall order of the contribution but gives a simpler formula.  

The leading order of a $q$-moment, corresponding to the total contribution of admissible graphs with $2q$ edges can be written as
\begin{multline*}
{E}_q(k)
=
\frac{1+o(1)}{n_1m^qn_0^{kq}}
\sum_{I_i,I_j=0}^q
\sum_{b=0}^{I_i+I_j+1}\frac{n_1!}{(n-q+I_i)!}\frac{m!}{(m-q+I_j)!}\times
\\
\times\mathcal{A}(q,k,I_i,I_j,b)\theta_1(f)^bn_0^{kb}
	2^{I_i+I_j+1-b}
	\theta_3(f)^{q-b}
	\left(
		\frac{\mathds{E}W_{11}^4}{\sigma_w^4}
	\right)^{q-b}
n_0^{\frac{k-1}{2}(2q-2b)}
\end{multline*}
which gives asymptotically,
\begin{align*}
{E}_q(k)&=
(1+o(1))\sum_{I_i,I_j=0}^q
\sum_{b=0}^{I_i+I_j+1}
	\left(\frac{2}{n_0}\right)^{(I_i+I_j+1)-b}
	\mathcal{A}(q,I_i,I_j,b)
	\theta_1(f)^b\left[
	\theta_3(f)
		\frac{\mathds{E}W_{11}^4}{\sigma_w^4}
\right]^{q-b}
	\phi^{I_j}\psi^{I_i+1-q}
\\
&= \left(1+o(1)\right)
\sum_{\substack{I_i,I_j=0\\I_i+I_j+1=q}}\mathcal{A}(q,I_i,I_j,I_i+I_j+1)\theta_1(f)^{I_i+I_j+1}\phi^{I_j}\psi^{I_i+1-q}
\\
&=
\left(1+o(1)\right)
\sum_{I_i,I_j=0}^q\sum_{b=0}^{I_i+I_j+1}{\mathcal{A}}(q,I_i,I_j,b)\theta_1(f)^b\theta_2(f)^{q-b}\psi^{I_i+1-q}\phi^{I_j}
\end{align*}
where we used in the last equality the fact that $\theta_2(f)=0$ in order to prove the expression \eqref{eq:resultmoment}. Note again that we did not give here all the errors since we have computed them in the previous subsection, the case of even monomials can be done similarly.  
Thus we can see that only the graphs which correspond to a tree of simple cycles contribute to the moments.

We can lead  the analysis of the contribution from non-admissible graphs as in the previous section, as the \emph{non admissible structure} only concerns the \emph{red} graph  while the (centered) polynomial involves only the matching on \emph{blue} vertices. We leave the detail to the reader.
\subsection{Proof of Theorem \ref{theo:pol} when \texorpdfstring{$\bm{f}$}{} is a polynomial:}
We now suppose that we can write
\[
f(x)=\sum_{k=1}^K a_kf_k(x)\quad\text{with}\quad f_k(x)=\frac{x^k-k!!\mathds{1}_{k\text{ even}}}{k!}
\quad\text{and}\quad \sup_{k\in[\![1,K]\!]} \vert a_k\vert\leqslant C^k\quad\text{for some }C. 
\]

In particular, the parameters are in this case
\[
\theta_1(f)=
\sum_{\substack{k_1,k_2=1\\k_1+k_2=:k_0\text{ even}}}^K
\frac{a_{k_1}a_{k_2}(\sigma_w\sigma_x)^{k_0}
\left(
	k_0!!-k_1!!k_2!!\mathds{1}_{k_1\text{ even}}
\right)}{k_1!k_2!},
\theta_2(f)=
\left(
\sum_{\substack{k=1\\k \text{ odd}}}^K\frac{a_k(\sigma_w\sigma_x)^kk(k-1)!!}{k!}
\right)^2.
\]
Note that for any polynomial, by expanding the moment as in \eqref{eq:moment1}, we have to compute the following quantity, for any $k_1,\dots,k_{2q}$ integers,
\begin{multline}\label{eq:momentpolyno}
\frac{1}{n_1}
\mathds{E}\left[
\Tr M^q
\right]
=
\sum_{k_1,\dots,k_{2q}=1}^K
\frac{a_{k_1}\dots a_{k_{2q}}}{n_1m^q\prod_{i=1}^{2q} k_i!}\times
\\
\times\mathds{E}
\sum_{i_1,\dots,i_q}^{n_1}
\sum_{j_1,\dots,j_q}^{m}
\sum_{\substack{\ell_1^1,\dots\ell^1_k\\\dots\\\ell^{2q}_1\dots\ell^{2q}_k}}^{n_0}
f_{k_1}\left(
	\frac{WX}{\sqrt{n_0}}
\right)_{i_1j_1}
f_{k_2}\left(
	\frac{WX}{\sqrt{n_0}}
\right)_{i_2j_1}\dots
f_{k_{2q}}\left(
	\frac{WX}{\sqrt{n_0}}
\right)_{i_1j_q}
\end{multline}
To compute the leading term of this moment, first note that the centering creates disparity between even and odd monomials. Indeed let $q>1$, if we consider one \emph{red} cycle of length $2q$, there are now $2q$ niches of different sizes, namely $k_1,\dots,k_{2q}$. We first bound these moments in order to see that, in each cycle, the niches with an even number of vertices are subleading so that the dominant term in the asymptotic expansion of the moment corresponds to admissible graphs with only odd niches when expanding the polynomial. 
	\begin{figure}[!ht]
		\centering
		\begin{tikzpicture}
		\draw[BrickRed, line width=.12em] (0,0) circle (1.2cm);
		\node[fill=BrickRed, label=$i_{1}$, inner sep = 0pt, minimum size=.2cm] (40) at (360/4: 1.2cm) {};
		\node[fill=BrickRed, label=left:$j_{2}$, inner sep = 0pt, minimum size=.2cm] (41) at (2*360/4: 1.2cm) {};
		\node[fill=BrickRed, label=below:$i_{2}$, inner sep = 0pt, minimum size=.2cm] at (3*360/4: 1.2cm) {};
		\node[fill=BrickRed, label=right:$j_{1}$, inner sep = 0pt, minimum size=.2cm] at (4*360/4: 1.2cm) {};

			\foreach \a in {1,2,...,5}{
			\node[fill=RoyalBlue, inner sep = 0pt, minimum size=.2cm] (1+\a) at (360/4+\a*360/24: 1.2cm) {};
			}
			
			\foreach \a in {1,2}{
			\node[fill=RoyalBlue, inner sep = 0pt, minimum size=.2cm] (2+\a) at (2*360/4+\a*360/12: 1.2cm) {};
			}
			
			\foreach \a in {1,2,3}{
			\node[fill=RoyalBlue, inner sep = 0pt, minimum size=.2cm] (3+\a) at (3*360/4+\a*360/16: 1.2cm) {};
			}
			
			\foreach \a in {1,2,...,4}{
			\node[fill=RoyalBlue, inner sep = 0pt, minimum size=.2cm] (4+\a) at (4*360/4+\a*360/20: 1.2cm) {};
			}
		
		\draw[-, RoyalBlue] (1+5) edge[bend right, line width=.12em] (4+4);
		\draw[-, RoyalBlue] (4+4) edge[bend right, line width=.12em] (3+2);
		\draw[-, RoyalBlue] (3+2) edge[bend right, line width=.12em] (2+2);
		\draw[-, RoyalBlue] (2+2) edge[bend right, line width=.12em] (2+1);
		\draw[-, RoyalBlue] (2+1) edge[bend right, line width=.12em] (1+5);

		\draw[-, RoyalBlue] (1+4) edge[bend right=60, line width=.12em] (1+3);
		\draw[-, RoyalBlue] (1+2) edge[bend right=60, line width=.12em] (1+1);
		\draw[-, RoyalBlue] (4+3) edge[bend right=60, line width=.12em] (4+2);
		\draw[-, RoyalBlue] (4+2) edge[bend right=60, line width=.12em] (4+1);
		\draw[-, RoyalBlue] (3+1) edge[bend right=60, line width=.12em] (3+3);
		\end{tikzpicture}
		\caption{Admissible graph in the case of a polynomial with $(k_1,k_2,k_3,k_4)=(4,3,2,5)$.}
		\label{fig:poly}
	\end{figure}
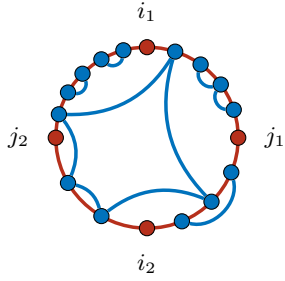
The behavior in a fundamental cycle can be understood as follows: there has to be at least one cycle connecting each niche for the odd or the centered even niches. Now, in each odd niche of length $k_i$, the leading term corresponds to  a perfect matching of the $k_i-1$ remaining vertices from \eqref{eq:contribmatching}. 
The number of pairwise distinct $l$ indices in the niche is then $(k_i-1)/2$, apart from the cycle.
However, in the even niches, since there is already a cycle, there remains an odd number of vertices to be matched.  The leading order is to disgard 2 vertices and then to perform a perfect matching of the $k_i-2$ remaining vertices. The remaining vertices are matched to a blue cycle or to an existing matching. Then, the number of distinct $l$ indices inside one niche is at most $(k_i-2)/2$ (apart from cycles). Denote the number of choices  of indices for \emph{red} and \emph{blue} vertices for a configuration of niches $k_1,\dots,k_{2q}$ by \emph{ $C(k_1,\dots,k_{2q})$}. Then we obtain
\[
\frac{n_0^{-\sum_{i=1}^{2q}\frac{k_i}{2}}}{n_1m^q}C(k_1,\dots,k_{2q})=\frac{n_0^{-\sum_{i=1}^{2q}\frac{k_i}{2}}}{n_1m^q}n_1^qm^qn_0^{1+\sum_{k_i\,\text{odd}}\frac{k_i-1}{2}+\sum_{k_i\,\text{even}}\frac{k_i-2}{2}}(1+o(1))=\frac{\psi^{1-q}}{n_0^{\frac{\#k_i\,\text{even}}{2}}}(1+o(1)).
\]
This contribution can be understood in the following way: apart from the normalization, we have to choose the $q$ $i$-indices, the $q$ $j$-indices, the $\ell$-indices. 
Thus, if we consider the contribution of cycles of size $q>1$ for the polynomial $P=\sum \frac{a_k}{k!}(X^k-k!!\mathds{1}_{k\text{ even}})$, we get the following asymptotic expansion for the moments
\begin{align*}
\eqref{eq:momentpolyno}&=\frac{1+\mathcal{O}\left(\frac{1}{\sqrt{n_0}}\right)}{n_1m^q}\sum_{\substack{k_1,\dots,k_{2q}\\k_i\text{ odd}}}
\left[
	\prod_{i=1}^{2q}\frac{a_{k_i}}{k_i!}
\right]
\frac{1}{n_0^{\sum_{i,k_i\text{ odd}}\frac{k_i}{2}}}n_1^qm^qn_0^{1+\sum_{i,k_i\text{ odd}}\frac{k_i-1}{2}}\prod_{i,k_i\text{ odd}}(\sigma_w\sigma_x)^{k_i}k_i(k_i-1)!!\\
&=\psi^{1-q}\left(
	\sum_{k\text{ odd}}a_{k}(\sigma_w\sigma_x)^{k}k(k-1)!!
\right)^{2q}
\left(
	1+\mathcal{O}\left(
		\frac{1}{\sqrt{n_0}}
	\right)
\right)
=\psi^{1-q}\theta_2^{q}(f)+\mathcal{O}\left(\frac{\theta_2^{q}(f)}{\sqrt{n_0}}\right).
\end{align*}
As we now explain, in the case of a cycle consisting of two edges decorated by $k_1$ and $k_2$ \emph{blue} vertices, there are three different possibilities:
$i)$ if $k_1$ and $k_2$ are odd: the contribution to the moment is $(\sigma_x\sigma_w)^{k_1+k_2}(k_1+k_2)!!.$; $ii)$ if $k_1$ and $k_2$ are even: the contribution is $(\sigma_w\sigma_x)^{k_1+k_2}((k_1+k_2)!!-k_1!!k_2!!); $ iii) while if $k_1$ is even and $k_2$ is odd: the leading term in the asymptotic expansion is of order $n_0^{-1/2}$ due to the discrepancy.
Thus, the 1-moment for a polynomial $f$ is
\begin{multline*}
\sum_{\substack{k_1,k_2=1\\k_1+k_2\text{ even}}}^K
\left(\frac{a_{k_1}a_{k_2}}{k_1!k_2!}(\sigma_w\sigma_x)^{k_1+k_2}
\left(
	(k_1+k_2)!!-k_1!!k_2!!\mathds{1}_{k_1\text{ even}}
\right)+
\mathcal{O}\left(
	\frac{(k_1+k_2)(k_1+k_2-1)!!}{\sqrt{n_0}k_1!k_2!}
\right)
\right)\\
=
\theta_1(f)
	+
	\mathcal{O}\left(\frac{K}{\sqrt{n_0}}
\right)
\end{multline*}
where we used the fact that for any $k_1$ and $k_2$, $(k_1+k_2)!!/(k_1!k_2!)$ is bounded.
While these analysis work in the case of a single cycle, we can do the same generalization to any (non) admissible graphs as before. Thus we get the following $q$-moment in the case of a polynomial
\[
m_q:=\frac{1}{n_1}\mathds{E}\left[
	\mathrm{Tr}M^q
\right]
=
(1+o(1))\sum_{I_i,I_j=0}^q
\sum_{b=0}^{I_i+I_j+1}
\mathcal{A}(q,I_i,I_j,b)\theta_1^b(f)\theta^{q-b}_2(f)\psi^{I_i+1-q}\phi^{I_j}.
\] This finishes the proof of Theorem \ref{theo:pol} when $f$ is a polynomial.
\subsection{Convergence of moments in probability}
In the previous subsection, we have proved convergence of the expected moments of the empirical eigenvalue distribution. We turn to the proof of the convergence in probability of these moments. 
\begin{lemma}\label{lem:variance}
Let $f(x)=\sum_k^K a_k x^k$ be a polynomial activation function and consider the associated matrix $M$ with enpirical eigenvalue distribution $\mu_{n_1}.$ Denote by $m_q$ th  moments $m_q = \frac{1}{n_1}\sum_{i=1}^{n_1}\lambda_i^q=\frac{1}{n_1}\Tr M^q$ and 
$\overline{m}_q=\mathds{E}\left[ m_q\right]$
we then have, for any $\varepsilon>0$,
\begin{equation}\label{eq:momproba}
\mathds{P}\left(
	\vert m_q - \overline{m}_q\vert > \varepsilon
\right)\xrightarrow[n_1\rightarrow\infty]{} 0.
\end{equation}
In addition there exists a constant $C$ such that  
\[
\mathrm{Var}\, m_q = \mathcal{O}\left(
\frac{(q^2K^2+q^4)C^q}{n_1^2}
\right)
\]
\end{lemma}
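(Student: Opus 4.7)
The plan is to prove the variance bound on $m_q$, from which \eqref{eq:momproba} follows by Chebyshev's inequality $\mathds{P}(|m_q-\overline{m}_q|>\varepsilon) \leqslant \varepsilon^{-2}\mathrm{Var}\, m_q \to 0$. I would proceed by the same graph-enumeration scheme already developed in Sections 3.1--3.3, but applied to pairs of decorated coincidence graphs. Writing $\Tr M^q=\sum_{G}\mathrm{weight}(G)$ with $G$ running over the decorated red/blue graphs from \eqref{eq:moment1}, one expands
\[
\mathrm{Var}(\Tr M^q)
=\sum_{G_1,G_2}\bigl(\mathds{E}[\mathrm{weight}(G_1)\mathrm{weight}(G_2)]-\mathds{E}[\mathrm{weight}(G_1)]\mathds{E}[\mathrm{weight}(G_2)]\bigr).
\]
Because the entries of $W$ and $X$ are independent and i.i.d., the joint expectation factorises whenever $G_1$ and $G_2$ share no common $W$-entry (pair $(i,\ell)$) and no common $X$-entry (pair $(\ell,j)$). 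Hence only those pairs $(G_1,G_2)$ that are \emph{linked}, i.e.\ share at least one $i$-, $j$-, or $\ell$-vertex that induces a shared matrix entry, contribute to the variance.

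Next, I would classify these linked pairs by the type and number of identifications. A single identification between the two cycles glues $G_1$ and $G_2$ at one red or blue vertex, producing a connected coincidence graph with $4q$ red edges. Reproducing the computation of Lemmas \ref{lemma:GA} and \ref{NAneg} on this glued graph shows that it contributes like a $2q$-moment times a $2q$-moment but with one fewer free summation index, because one of the $n_1$, $m$ or $n_0$ summations is lost at the glued vertex. This yields a factor $n_1^{-1}$ (or $m^{-1}$ or $n_0^{-1}$) of suppression relative to $(\mathds{E}\Tr M^q)^2$, which is $O(n_1^2 \bar m_q^2)=O(n_1^2 C^{2q})$. Every further identification costs another factor of $n^{-1}$, so only the single-identification configurations need to be counted precisely; all higher-order ones go into an $o(\cdot)$ term. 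The combinatorial count of single identifications is then: $O(q^2)$ ways to glue at a pair of $i$-vertices (one from each trace, $q$ choices each), $O(q^2)$ at a pair of $j$-vertices, and $O(q^4)$ at a pair of $\ell$-vertices (since each trace has $O(q^2)$ blue vertices when $K$ is bounded).

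The $K$-dependence enters as follows. Expanding $f^{\otimes 2q}$ on each trace produces $\sum_{k_1,\ldots,k_{4q}} \prod a_{k_i}/k_i!$ with $k_i\leqslant K$; the polynomial bound $|a_k|\leqslant C^k$ together with the uniform bound on $(k!!)^2/k!$ used throughout Section 3.3 already yields a geometric factor $C^{2q}$ for the admissible weight. The extra factor $K^2$ comes specifically from gluings at a red vertex: the two red edges (one from each trace) incident to the glued vertex now interact combinatorially through their blue decorations, and each can carry up to $K$ blue vertices, producing at most $K^2$ choices for the cross-matching. Assembling the three contributions gives $\mathrm{Var}(\Tr M^q) = O\bigl((q^2K^2+q^4)C^q\bigr)$, hence $\mathrm{Var}\, m_q = n_1^{-2}\mathrm{Var}(\Tr M^q) = O\bigl(n_1^{-2}(q^2K^2+q^4)C^q\bigr)$, and \eqref{eq:momproba} follows.

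The main obstacle is bookkeeping: one must verify that the cancellation between $\mathds{E}[\mathrm{weight}(G_1)\mathrm{weight}(G_2)]$ and $\mathds{E}[\mathrm{weight}(G_1)]\mathds{E}[\mathrm{weight}(G_2)]$ indeed retains only the \emph{linked} pairs, keep the $K$-dependence through coincident-red-vertex gluings separate from the $q$-dependence through coincident-blue-vertex gluings, and show that all multi-identification configurations are absorbed into lower-order terms in $n_1^{-1}$. Once this accounting is in place, the variance bound is a direct consequence of the admissibility analysis already carried out in Section 3.1--3.3.
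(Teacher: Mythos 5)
Your overall strategy is the same as the paper's: expand $\mathrm{Var}(\Tr M^q)$ as a sum over pairs of decorated graphs, observe that independence kills every term where $\mathcal{G}_1$ and $\mathcal{G}_2$ share no matrix entry, and then estimate the surviving linked pairs by re-running the admissibility analysis from Sections 3.1--3.3. The $K^2 q^2$ attribution is also essentially correct and matches the paper's first mechanism: attach the two red cycles at a single $i$- (or $j$-) vertex and then identify one blue edge from a niche adjacent to it in each graph ($q$ ways to pick the shared red vertex in each trace, $K$ ways to pick the blue edge in each adjacent niche), producing a shared $W$ or $X$ entry and hence a nonzero covariance through the difference $\mathds{E}W_{11}^4 - (\mathds{E}W_{11}^2)^2$.

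Where your argument has a genuine gap is the source of the $q^4$ term. You attribute it to gluings at a pair of $\ell$-vertices with the count ``$O(q^2)$ blue vertices per trace.'' Neither the count nor the mechanism is right: each trace carries at most $2qK$ blue vertices (one niche per red edge, at most $K$ per niche), so this count is $O(qK)$, not $O(q^2)$; and more seriously, identifying a single $\ell$-vertex between $\mathcal{G}_1$ and $\mathcal{G}_2$ does not by itself create a shared matrix entry---an entry is an edge $\{i,\ell\}$ or $\{\ell,j\}$, so it requires identifying the adjacent red vertex as well, which folds this case back into the first one. The paper's $q^4$ term arises from a different mechanism that your classification misses entirely: identify a \emph{pair} of red vertices in $\mathcal{G}_1$ (two vertices chosen from one of its fundamental cycles, $O(q^2)$ choices) with a pair of red vertices in $\mathcal{G}_2$ ($O(q^2)$ more choices, total $O(q^4)$). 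The glued graph is now non-admissible with two extra identifications and a new cycle structure; the paths between the two shared vertices in the two traces merge into one fundamental cycle, the typical blue matching on this merged cycle then produces $W$- or $X$-edges lying simultaneously on both $\mathcal{G}_1$ and $\mathcal{G}_2$, and the covariance is nonzero. Without this case your bound would only be $O(q^2K^2C^q/n_1^2)$ and would miss the $q^4$ piece.

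A secondary imprecision: you say one identification costs ``a factor $n_1^{-1}$ of suppression relative to $(\mathds{E}\Tr M^q)^2 = O(n_1^2 C^{2q})$.'' Since $(\mathds{E}\Tr M^q)^2$ grows like $n_1^2$ and the lemma needs $\mathrm{Var}(\Tr M^q)=O(1)$ in $n_1$, a single lost summation is not enough; both of the paper's cases in fact lose (at least) two summation indices (one red and one blue in the first case, two red in the second). Your bookkeeping should make this explicit, otherwise the claimed $n_1^{-2}$ in $\mathrm{Var}\,m_q$ is not visibly achieved.
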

\begin{proof}
We can write the variance of the moments in the following way
\[
\mathrm{Var}\, m_q
=
\mathds{E}\left[
	\left(
		\frac{1}{n_1}\Tr M^q
	\right)^2
\right]
-
\overline{m}_q^2
=
\frac{1}{n_1^2}\sum_{\mathcal{G}_1,\,\mathcal{G}_2}
\sum_{\bm{\ell_1},\bm{\ell_2}}
\mathds{E}\left[
	M_{\mathcal{G}_1}(\bm{\ell_1})M_{\mathcal{G}_2}(\bm{\ell_2})
\right]
-
\mathds{E}\left[
	M_{\mathcal{G}_1}(\bm{\ell_1})
\right]
\mathds{E}\left[
	M_{\mathcal{G}_2}(\bm{\ell_2})
\right]
\]
with $\mathcal{G}_p=(G_p,\,\mathbf{i_p},\,\mathbf{j_p})$ are labeled graphs with the $i$-labels and $j$-labels given respectively by $\mathbf{i_p}$, $\mathbf{j_p}$. For a given labeled graph $\mathcal{G}=(G,\,\mathbf{i},\,\mathbf{j})$ and a matching $\bm{\ell}$, the notation $M_{\mathcal{G}}(\bm{\ell})$ corresponds to the following product after expansion
\[
M_{\mathcal{G}}(\bm{\ell})=\sum_{k_1,\dots,k_{2q}=1}^K
\frac{a_{k_1}\dots a_{k_{2q}}}{m^qn_0^{\sum k_i/2}}
\prod_{p=1}^{k_1}W_{i_1\ell^1_p}X_{\ell_p^1j_1}
\prod_{p=1}^{k_2}W_{i_2\ell^2_p}X_{\ell_p^2j_1}
\dots
\prod_{p=1}^{k_{2q}}W_{i_1\ell^{2q}_p}X_{\ell_p^{2q}j_q}.
\]
Now, note that the shape of the graph and the possible expansion of the polynomial $f$ does not depend on $n_0$, $n_1$ or $m$. 
By independence, the two graphs $\mathcal{G}_1$ and $\mathcal{G}_2$ have to share an edge otherwise the contribution to the variance is null. In particular, the concatenated graph $\mathcal{G}$ cannot be admissible. Thus we only need to consider graphs $\mathcal{G}_1$, $\mathcal{G}_2$ which share a common edge: either a red one or some $X_{\ell j}$ or $W_{i\ell}$ for some $i$, $j$, and $\ell$. In other words the concatinated graph $\mathcal{G}$ cannot be admissible.  We here assume for ease that $\mathcal{G}_1$ and $\mathcal{G}_2$ have  $2q$ edges. The case where the number of edges is different in each cycle can be similarly handled.\\
To simplify the exposition of the argument further, we suppose that $\mathcal{G}_1$ and $\mathcal{G}_2$ are both a cycle and $f$ is an odd monomial $x^k$. Note that the generalization comes from the fact that admissible graphs are a tree of cycles and non-admissible graphs yield a lower order contribution from \eqref{eq:constraintqk2}. If we suppose that the coincidence between the two graphs comes from an $i$-label and a $\ell$-label, in other words an entry $W_{i\ell}$, we have different possibilities that we now develop.

The first case consists in taking the two \emph{red} cycles and attaching them at a fixed  vertex $i_0$. We then perform a cross-cycle identification  as in Figure \ref{fig:crosscycle} in order to match two entries $W_{i_0\ell_0}$ together from $\mathcal{G}_1$ and $\mathcal{G}_2$. Once these $W$ entries are matched, note that the corresponding $X$ entries have not been matched yet. 
We then need to identify this $l$-vertex with another vertex from an adjacent niche (and then creating a \emph{blue} cycle going over the whole \emph{red} cycle) or to another vertex in the same niche. Finally, it can be seen as simply performing the dominant matching into each graph, identifying two $i$ indices and then identifying two $\emph{blue}$ edges from niches adjacent to $i$. Finally we can compute the contribution of these graphs in the covariance as
\[
\sum_{\bm{\ell_1},\bm{\ell_2}}
\mathrm{Cov}^{(1)}(M_{\mathcal{G}_1}(\bm{\ell_1}),M_{\mathcal{G}_2}(\bm{\ell_2}))
=
\mathcal{O}\left(
	q^2k^2
	\psi^{1-2q}
	\theta_2(f)^{2q}
	\left(
		\frac{\mathds{E}W_{11}^4}{\sigma_w^4}-1
	\right)
\right).
\]
Indeed, in each graph we perform the typical matching corresponding to a \emph{blue} cycle going over every niche and perfect matchings between the remaining indices in each niche. Now the fact that we identify two $W_{i_0\ell_0}$ entries create a moment of order 4 when we compute $\mathds{E}\left[M_{\mathcal{G}_1}M_{\mathcal{G}_2}\right].$ We then have to count the number of possible choices for indices: we have $n_1^{2q-1}$ choices for the $i$ indices as we identify two from $\mathcal{G}_1$ and $\mathcal{G}_2$, $m^{2q}$ for the $j$ indices, $n_0^{2+4q(k-1)/2-1}$ choices for the $\ell$ indices (2 cycles, $4q$ niches and an identification between the two graphs). Taking into account the normalization $m^{-2q}n_0^{-2kq}$, this yields a factor $\psi^{1-2q}$ asymptotically. In the same way, for general polynomial and admissible graphs, for such an identification we would obtain that 
\[
\frac{1}{n_1^2}
\sum_{\mathcal{G}_1,\mathcal{G}_2}
\sum_{\bm{\ell_1},\bm{\ell_2}}
\mathrm{Cov}^{(1)}(M_{\mathcal{G}_1}(\bm{\ell_1}),M_{\mathcal{G}_1}(\bm{\ell_1}))
=
\mathcal{O}\left(
	\frac{q^2k^2}{n_1^2\psi}
	\left(
		1-\frac{\phi}{\psi}
	\right)
	\mathfrak{m}_q^2
	\left(
		\frac{\mathds{E}W_{11}^4}{\sigma_w^4}-1
	\right)
\right)
=
\mathcal{O}\left(
	\frac{q^2k^2C^q}{n_1^2}
\right),
\]
for some $C>0$. Indeed, we get the $q^2k^2$ from the choices for the edge we want to identify between the two graphs, the constant factor in $\phi$ and $\psi$ consists in the choice of choosing a $\{i,\ell\}$ edge or a $\{j,\ell\}$ edge. Then the previous computation in the case of a cycle can be generalized to all graphs as the construction only involves one cycle in each graph. For the second equality we use the fact that $\mathfrak{m}_q\leq C^q$ as proved in the next subsection. 

The second case consists in identifying a pair of \emph{red} vertices in each graph. Such a pair is chosen in \emph{one } fundmaental cycle in both $\mathcal{G}_1$ and $\mathcal{G}_2$. 
Then we identify the pair from one graph to the other pair. This allows the existence of edges belonging to the two graphs $\mathcal{G}_1$ and $\mathcal{G}_2$. 
The whole graph $\mathcal{G}$ created by this construction is non admissible as we have two identifications and two fundamental cycles. We thus need to choose the fundamental cycles in $\mathcal{G}$.
The fundamental cycles we choose for this \emph{red} graph are given by the cycles between the two vertices with edges belonging to both graphs in each cycle. Since we need to choose a pair of vertices in each graph we have $q^4$ choices. In each fundamental cycles, we perform the typical \emph{blue} matching and we have an edge between a niche from $\mathcal{G}_1$ and a niche from $\mathcal{G}_2$ (corresponding to the cycle going over every niche for instance). Thus we have a common $W$ or $X$ entry between the two graphs and the contribution to the covariance does not vanish. Considering the $q^4$ choices for the \emph{red} vertices, we can see that we have
\[
\frac{1}{n_1^2}
\sum_{\mathcal{G}_1,\mathcal{G}_2}
\sum_{\bm{\ell_1},\bm{\ell_2}}
\mathrm{Cov}^{(1)}(M_{\mathcal{G}_1}(\bm{\ell_1}),M_{\mathcal{G}_2}(\bm{\ell_2}))
=
\mathcal{O}\left(
	\frac{q^4C^q}{n_1^2}
\right).
\]
Regarding the number of possible choices for the vertices, the number of $l$-indices is unchanged while that of $i-$ or $j$-indices decreases of $2$ if we compare to the computation of the expected moment.
Finally, we obtain that 
\[
\mathrm{Var}\, m_q = \mathcal{O}\left(
	\frac{(q^2k^2+q^4)C^q}{n_1^2}
\right).
\]
Using Bienaymé-Chebyshev inequality, one easily deduces \eqref{eq:momproba}.
\begin{figure}[!ht]
	\begin{subfigure}[t]{.49\textwidth}
	\centering
	\begin{tikzpicture}
		\draw[BrickRed, line width=.12em, fill=LimeGreen!30] (0,0) circle (1cm);
		\node[fill=BrickRed, inner sep = 0pt, minimum size=.2cm] (40) at (360/4: 1cm) {};
		\node[fill=BrickRed, label=left:\large $\mathcal{G}_1$, inner sep = 0pt, minimum size=.2cm] (41) at (2*360/4: 1cm) {};
		\node[fill=BrickRed, inner sep = 0pt, minimum size=.2cm] at (3*360/4: 1cm) {};
		\foreach \b in {1,2,...,4}{
			\foreach \a in {1,2,3}{
			\node[fill=RoyalBlue, inner sep = 0pt, minimum size=.2cm] (\b+\a) at (\b*360/4+\a*360/16: 1cm) {};
			}
		}
		
		\draw[-, RoyalBlue] (1+1) edge[bend left, line width=.12em] (2+1);
		\draw[-, RoyalBlue] (2+1) edge[bend left, line width=.12em] (3+1);
		\draw[-, Black] (3+1) edge[bend left, line width=.30em] (4+3);
		\draw[-, Orange] (3+1) edge[bend left, line width=.20em] (4+3);		
		\draw[-, RoyalBlue] (4+3) edge[bend left, line width=.12em] (1+1);
		
		\draw[-, RoyalBlue] (1+2) edge[bend left=60, line width=.12em] (1+3);
		\draw[-, RoyalBlue] (2+2) edge[bend left=60, line width=.12em] (2+3);
		\draw[-, RoyalBlue] (3+2) edge[bend left=60, line width=.12em] (3+3);
		\draw[-, RoyalBlue] (4+1) edge[bend left=60, line width=.12em] (4+2);
		
		\begin{scope}[xshift=2cm]
			\draw[BrickRed, line width=.12em, fill=ProcessBlue!30] (0,0) circle (1cm);
			\node[fill=BrickRed, inner sep = 0pt, minimum size=.2cm] (40) at (360/4: 1cm) {};
			\node[fill=BrickRed, label=left:$\bm{i_{0}}$, inner sep = 0pt, minimum size=.2cm] (41) at (2*360/4: 1cm) {};
			\node[fill=BrickRed, inner sep = 0pt, minimum size=.2cm] at (3*360/4: 1cm) {};
			\node[fill=BrickRed, label=right:\large $\mathcal{G}_2$, inner sep = 0pt, minimum size=.2cm] at (4*360/4: 1cm) {};
			\foreach \b in {1,2,...,4}{
				\foreach \a in {1,2,3}{
				\node[fill=RoyalBlue, inner sep = 0pt, minimum size=.2cm] (0\b+0\a) at (\b*360/4+\a*360/16: 1cm) {};
				}
			}
			
			\draw[-, RoyalBlue] (01+03) edge[bend left, line width=.12em] (02+01);
			\draw[-, RoyalBlue] (02+01) edge[bend left, line width=.12em] (03+03);
			\draw[-, RoyalBlue] (03+03) edge[bend left, line width=.12em] (04+03);
			\draw[-, RoyalBlue] (04+03) edge[bend left, line width=.12em] (01+03);
			
			\draw[-, Black] (01+01) edge[bend left=60, line width=.30em] (01+02);
			\draw[-, Orange] (01+01) edge[bend left=60, line width=.20em] (01+02);
			\draw[-, RoyalBlue] (02+02) edge[bend left=60, line width=.12em] (02+03);
			\draw[-, RoyalBlue] (03+01) edge[bend left=60, line width=.12em] (03+02);
			\draw[-, RoyalBlue] (04+01) edge[bend left=60, line width=.12em] (04+02);
		\end{scope}
		
		\draw[-, Black] (4+3) edge[bend left, line width=.30em] (01+01);
		\draw[-, Orange] (4+3) edge[bend left, line width=.20em] (01+01);
		
		\node[draw=none] at (1,-1.65) {};
	\end{tikzpicture}
	\caption{In this figure, the two highlighted cycles correspond to the graph $\mathcal{G}_1$ and $\mathcal{G}_2$ which are attached at ${i_0}.$ We perform a typical \emph{blue} matching in each graph and then add an identification between the two graphs. The highlighted orange edges  correspond to the edges common to the two graphs, which  yields a moment of order 4.}
	\end{subfigure}
	\begin{subfigure}[t]{.5\textwidth}
	\centering
	\begin{tikzpicture}
		\draw[BrickRed, line width=.12em] (0,0) circle (1cm);
		\node[fill=BrickRed, label={[label distance=-5pt]above:$\bm{i_0^\prime}$}, inner sep = 0pt, minimum size=.2cm] (3) at (360/4: 1cm) {};
		\node[fill=BrickRed, inner sep = 0pt, minimum size=.2cm] (41) at (2*360/4: 1cm) {};
		\node[fill=BrickRed, ,label={[label distance=-5pt]below:$\bm{i_0}$}, inner sep = 0pt, minimum size=.2cm] (4) at (3*360/4: 1cm) {};
		\node[fill=BrickRed, inner sep = 0pt, minimum size=.2cm] (42) at (4*360/4: 1cm) {};
		\foreach \b in {1,2,...,4}{
			\foreach \a in {1,2,3}{
			\node[fill=RoyalBlue, inner sep = 0pt, minimum size=.2cm] (\b+\a) at (\b*360/4+\a*360/16: 1cm) {};
			}
		}
		
		\node[fill=BrickRed, inner sep = 0pt, minimum size=.2cm] (1) at (-3,0) {};

		\node[fill=BrickRed, inner sep = 0pt, minimum size=.2cm] (2) at (3,0) {};
		
		\begin{scope}[on background layer]
    		\path[draw=none] [fill=LimeGreen!30] (3.center) to [ bend right] (1.center);    
        	\path[draw=none] [fill=LimeGreen!30] (1.center) to [bend right] (4.center);        			\path[draw=none] [fill=LimeGreen!30] (1.center) to [bend left=55] (42.center);
        	\path[draw=none] [fill=LimeGreen!30] (1.220) to [bend left=55] (42.320);
        	\path[draw=none] [fill=LimeGreen!30] (1.center) to [bend right=55] (42.center);
        	
        	\path[draw=none] [fill=ProcessBlue!30] (2.center) to [ bend right] (3.center);    
        	\path[draw=none] [fill=ProcessBlue!30] (4.center) to [bend right] (2.center);        			\path[draw=none] [fill=ProcessBlue!30] (41.center) to [bend left=55] (2.center);
        	\path[draw=none] [fill=ProcessBlue!30] (41.220) to [bend left=55] (2.320);
        	\path[draw=none] [fill=ProcessBlue!30] (41.center) to [bend right=55] (2.center);

        	\draw[draw=none,line width=.12em, fill=White] (0,0) circle (1cm);

		\end{scope}

		\draw[-, BrickRed, line width=.12em] 
			(3) edge[bend right] 
				node[draw=black, line width=.05em, fill=RoyalBlue, inner sep=0pt, minimum size=.2cm, pos=.25] (311) {} 
				node[draw=black, line width=.05em, fill=RoyalBlue, inner sep=0pt, minimum size=.2cm, pos=.5] (312) {} 
				node[draw=black, line width=.05em, label={[Black]135:\large $\mathcal{G}_1$}, fill=RoyalBlue, inner sep=0pt, minimum size=.2cm, pos=.75] (313) {} 
			(1);
			
		\draw[-, BrickRed, line width=.12em] 
			(1) edge[bend right] 
				node[draw=black, line width=.05em, fill=RoyalBlue, inner sep=0pt, minimum size=.2cm, pos=.25] (141) {} 
				node[draw=black, line width=.05em, fill=RoyalBlue, inner sep=0pt, minimum size=.2cm, pos=.5] (142) {} 
				node[draw=black, line width=.05em, fill=RoyalBlue, inner sep=0pt, minimum size=.2cm, pos=.75] (143) {} 
			(4);
			
		\draw[-, BrickRed, line width=.12em] 
			(4) edge[bend right] 
				node[draw=black, line width=.05em, fill=RoyalBlue, inner sep=0pt, minimum size=.2cm, pos=.25] (421) {} 
				node[draw=black, line width=.05em, fill=RoyalBlue, inner sep=0pt, minimum size=.2cm, pos=.5] (422) {} 
				node[draw=black, line width=.05em, fill=RoyalBlue, inner sep=0pt, minimum size=.2cm, pos=.75] (423) {} 
			(2);
			
		\draw[-, BrickRed, line width=.12em] 
			(2) edge[bend right] 
				node[draw=black, line width=.05em, label={[Black]45:\large $\mathcal{G}_2$}, fill=RoyalBlue, inner sep=0pt, minimum size=.2cm, pos=.25] (231) {} 
				node[draw=black, line width=.05em, fill=RoyalBlue, inner sep=0pt, minimum size=.2cm, pos=.5] (232) {} 
				node[draw=black, line width=.05em, fill=RoyalBlue, inner sep=0pt, minimum size=.2cm, pos=.75] (233) {} 
			(3);

		\draw[-, RoyalBlue] (1+1) edge[bend left=10, line width=.12em] (2+3);
		\draw[-, RoyalBlue] (3+1) edge[bend left=10, line width=.12em] (4+3);
		\draw[-, RoyalBlue] (1+2) edge[bend left=60, line width=.12em] (1+3);
		\draw[-, RoyalBlue] (2+1) edge[bend left=60, line width=.12em] (2+2);
		\draw[-, RoyalBlue] (3+2) edge[bend left=60, line width=.12em] (3+3);
		\draw[-, RoyalBlue] (4+1) edge[bend left=60, line width=.12em] (4+2);
		
		\draw[-, RoyalBlue] (311) edge[bend right, line width=.12em] (143);
		\draw[-, RoyalBlue] (312) edge[bend left=60, line width=.12em] (313);
		\draw[-, RoyalBlue] (141) edge[bend left=60, line width=.12em] (142);
		\draw[-, RoyalBlue] (421) edge[bend right, line width=.12em] (233);
		\draw[-, RoyalBlue] (422) edge[bend left=60, line width=.12em] (423);
		\draw[-, RoyalBlue] (231) edge[bend left=60, line width=.12em] (232);

		\draw[-, Black] (311) edge[bend left=60, line width=.30em] (233);
		\draw[-, Orange] (311) edge[bend left=60, line width=.20em] (233);
		
		\draw[-, Black] (4+3) edge[bend left=60, line width=.30em] (1+1);
		\draw[-, Orange] (4+3) edge[bend left=60, line width=.20em] (1+1);
		
		\draw[-, Black] (2+3) edge[bend left=60, line width=.30em] (3+1);
		\draw[-, Orange] (2+3) edge[bend left=60, line width=.20em] (3+1);
		
		\draw[-, Black] (421) edge[bend left=60, line width=.30em] (143);
		\draw[-, Orange] (421) edge[bend left=60, line width=.20em] (143);
	\end{tikzpicture}
	\caption{In this figure, the two highlighted cycles correspond to the graph $\mathcal{G}_1$ and $\mathcal{G}_2$ which are attached at two vertices ${i_0}$ and $i_0^\prime$. The graph is non-admissible and we choose the fundamental cycles so that neither $\mathcal{G}_1$ or $\mathcal{G}_2$ are fundamental cycles. The typical matching in the chosen cycles create common edges between the two graphs highlighted in orange on the figure. }
	\end{subfigure}
\end{figure}
\end{proof}

\subsection{From bounded to sub-Gaussian random variables}
We have computed the limiting expected moments in the case of bounded random variables. However, note that while high moments of $W$ or $X$ can appear in the error terms, as in \eqref{eq:highmom1}, \eqref{eq:highmoment2} and \eqref{eq:highmom3}, one may use for such sub-Gaussian random variables the following bound 
\[
\mathds{E}\left[\vert X_{11}\vert ^k\right]\leqslant C^kk^{k/\alpha},\quad \mathds{E}\left[\vert W_{11}\vert ^k\right]\leqslant C^kk^{k/\alpha},
\]
for some constant $C$. Thus one may simply replace in all the error terms  $A$ by $k^{1/\alpha}$. Since 
$k$ is of order $\frac{\log n_1}{\log\log n_1}$ all the errors are still $o(1)$.
\subsection{Weak convergence of the empirical spectral measure\label{subsec:36}}
In this section we briefly finish the proof of Theorems \ref{theo:result1} and \ref{theo:result2} for a polynomial activation function. The fact that the sequence of moments  \begin{equation}\label{eq:defmoment}
\mathfrak{m}_q
:=
\sum_{I_i,I_j=0}^q\sum_{b=0}^{I_i+I_j+1}{\mathcal{A}}(q,I_i,I_j,b)\theta_1(f)^b\theta_2(f)^{q-b}\psi^{I_i+1-q}\phi^{I_j}
\end{equation}
uniquely defines a probability measure $\mu$ so that  $\int x^q \D \mu(x) = \mathfrak{m}_q$ follows from Carleman's condition. Indeed, denote by $\Theta(q)$ the number of unlabeled cactus graphs with $q$ vertices.  It has been shown in \cite{ford1956combinatorial} that, regardless of the number of identifications or simple cycles, there exists numerical constants $\delta>0$ and  $\xi>1$ such that $\Theta(q)\sim \frac{3\delta}{4\sqrt{\pi }}\frac{\xi^{q+3/2}}{q^5}.$
Thus there exists a constant $C$ such that $
\mathfrak{m}_q\leqslant C^q.$  This can also been used to show that the measure has compact support.

\subsection{Derivation of the self-consistent equation for the Stieltjes transform}
Consider the Stieltjes transform of the limiting empirical eigenvalue distribution of $M$,
\[
G(z)=\int \frac{\D \mu(x)}{x-z}.
\]
One can also write it as the following generating function of moments, since the following equality makes sense at least on a neighborhood of infinity,
\[
-G(z)=\frac{1}{z}+\sum_{q=1}^\infty \frac{\mathfrak{m}_q}{z^{q+1}}.
\]
Using that
\[
\mathfrak{m}_q
=\psi^{1-q}
\sum_{I_i,I_j=0}^q
\sum_{b=0}^{I_i+I_j+1}
\mathcal{A}(q,I_i,I_j,b)\theta_1^b(f)\theta_2^{q-b}(f)\psi^{I_i}\phi^{I_j},
\]
one can write the Stieltjes transform as 
\[
-G(z)=\frac{1-\psi}{z}+\frac{\psi}{z}H(z)\quad\text{with}\quad H(z)=\sum_{q=0}^\infty\frac{1}{(\psi z)^q}\sum_{I_i,I_j=0}^q
\sum_{b=0}^{I_i+I_j+1}
\mathcal{A}(q,I_i,I_j,b)\theta_1^b(f)\theta_2^{q-b}(f)\psi^{I_i}\phi^{I_j}.
\]
Fix a vertex $v$ and denote $q_0$ the length of one of the fundamental cycles containing $v$. Suppose first that we have $q_0>1$, this cycle contains $2q_0$ edges with $q_0$ vertices labeled with $i$ and $q_0$ vertices labeled with $j$.
On each vertex labeled with $i$, either a graph is attached and we have a $i$-identification on this vertex, or nothing is attached. Thus, considering the formula above, we have that the contributions for identifications for each vertex is 
\[
H_\psi(z):=1-\psi+\psi H(z)\quad\text{for }i\text{-labels and}\quad H_\phi(z):=1-\phi+\phi H(z)\quad\text{for }j\text{-labels}.
\]
Also, one can see in the leading order of the moment that a cycle of length $q_0$ give a contribution of $\left(\frac{\theta_2(f)}{\psi z}\right)^{q_0}.$ Now, if the cycle is of length 1, in the same way, there is a single $i$-labeled vertex and a single $j$-labeled vertex which can give a contribution of $H_\psi$ and $H_\phi$ but the contribution of a simple cycle is not given in terms of $\theta_2(f)$ but by $\frac{\theta_1(f)}{\psi z}$. This is illustrated in Figure \ref{fig:recurs}. Thus, we have the following recursion relation for $H$,
\begin{multline*}
H(z)=1+\frac{H_\phi(z) H_\psi(z)\theta_1}{\psi z}+\sum_{q_0=2}^\infty \left(\frac{H_\phi(z) H_\psi(z) \theta_2}{\psi z}\right)^{q_0}\\
=1+\frac{H_\phi(z)H_\psi(z)(\theta_1-\theta_2)}{\psi z}+\frac{H_\phi(z)H_\psi(z)\theta_2}{\psi z-H_\phi(z)H_\psi(z)\theta_2}.
\end{multline*}
Note that we obtain the final equation from Theorem \ref{theo:result2} by noting that $H_\psi(z)=-zG(z)$ and $H_\phi(z)=-z\tilde{G}(z)$.
	\begin{figure}[!ht]
		\begin{subfigure}[t]{.5\linewidth}
		\centering
		\begin{tikzpicture}
			\draw[BrickRed, line width = .12em, fill=BrickRed!15] (0,0) circle (1.35cm) ;
	  \node[fill=BrickRed, label={[label distance = .5mm]below:$i_{1}$}, inner sep = 0pt, minimum size=.2cm] (1) at (360/6+30: 1.35cm) {};
	  \node[fill=BrickRed, label=315:$j_{3}$, inner sep = 0pt, minimum size=.2cm] (2) at (2*360/6+30: 1.35cm) {};
	  \node[fill=BrickRed, label=45:$i_3$, inner sep = 0pt, minimum size=.2cm] (3) at (3*360/6+30: 1.35cm)  {};
	  \node[fill=BrickRed, label=	above:$j_{2}$, inner sep = 0pt, minimum size=.2cm] (4) at (4*360/6+30: 1.35cm) {};
	  \node[fill=BrickRed, label=135:$i_{2}$, inner sep = 0pt, minimum size=.2cm] (5) at (5*360/6+30: 1.35cm) {};
	  \node[fill=BrickRed, label=225:$j_{1}$, inner sep = 0pt, minimum size=.2cm] (6) at (6*360/6+30: 1.35cm) {};
	  \node[draw=none] (1234) at (0,0) {${\displaystyle{\left(\frac{\theta_2}{\psi z}\right)^{\!q_0}}}$};
	  
	  \begin{scope}[on background layer]
    \path [draw, LimeGreen,fill=LimeGreen!30] (1) to [loop above, looseness=10, min distance=1.7cm, out=60, in=120] (1);
\end{scope}  
	\node[draw=none] (1235) at (360/6+30:1.98cm) {$H_\psi$};
	
	\begin{scope}[on background layer]
	\path [draw, LimeGreen,fill=LimeGreen!30] (3) to [loop above, looseness=10, min distance=1.7cm, out=180, in=240] (3);
\end{scope}  
	\node[draw=none] (1235) at (3*360/6+30:1.98cm) {$H_\psi$};
	
	\begin{scope}[on background layer]
	\path [draw, LimeGreen,fill=LimeGreen!30] (5) to [loop above, looseness=10, min distance=1.7cm, out=0, in=300] (5);
\end{scope}  
	\node[draw=none] (1235) at (5*360/6+30:1.98cm) {$H_\psi$};
	
	\begin{scope}[on background layer]
	\path [draw, ProcessBlue,fill=ProcessBlue!30] (2) to [loop above, looseness=10, min distance=1.7cm, out=120, in=180] (2);
\end{scope}  
	\node[draw=none] (1235) at (2*360/6+30:1.98cm) {$H_\phi$};
	
	\begin{scope}[on background layer]
	\path [draw, ProcessBlue,fill=ProcessBlue!30] (4) to [looseness=10, min distance=1.7cm, out=240, in=300] (4);
\end{scope}  
	\node[draw=none] (1235) at (4*360/6+30:1.98cm) {$H_\phi$};
	
	\begin{scope}[on background layer]
	\path [draw, ProcessBlue,fill=ProcessBlue!30] (6) to [loop above, looseness=10, min distance=1.7cm, out=0, in=60] (6);
\end{scope}  
	\node[draw=none] (1235) at (6*360/6+30:1.98cm) {$H_\phi$};
	
		\end{tikzpicture}
		\caption{Contributions for the recursion formula in the case of a large cycle ($q_0=3$)}
		\end{subfigure}
		\begin{subfigure}[t]{.4\linewidth}
			\centering
			\begin{tikzpicture}			 		\node[fill=BrickRed, label=	below:$i_{1}$, inner sep = 0pt, minimum size=.2cm] (1) at (0,3) {};
			\node[fill=BrickRed, label=	below:$j_{1}$, inner sep = 0pt, minimum size=.2cm] (2) at (3,3) {};
			\begin{scope}[on background layer]
    \path[draw, BrickRed, line width=.12em] [fill=BrickRed!15] (1.center) to [ bend left=40] (2.center);
        \path[draw=none] [fill=BrickRed!15] (1.220) to [bend left] (2.320);
    \path[draw, BrickRed, line width=.12em] [fill=BrickRed!15] (1.center) to [ bend right=40] (2.center);
\end{scope} 
			\node[draw=none] (3) at (1.5,3) {$\displaystyle{\frac{\theta_1}{\psi z}}$};
			\begin{scope}[on background layer]
	\path [draw, ProcessBlue,fill=ProcessBlue!30] (2) to [loop above, looseness=10, min distance=1.7cm, out=330, in=30] (2);
\end{scope}  
	\node[draw=none] (1235) at (3.7,3) {$H_\phi$};
			\begin{scope}[on background layer]
	\path [draw, LimeGreen,fill=LimeGreen!30] (1) to [looseness=10, min distance=1.7cm, out=150, in=210] (1);
\end{scope}  
	\node[draw=none] (1235) at (-.7,3) {$H_\psi$};
	\node[draw=none] (132) at (0,0) {};
			\end{tikzpicture}
			\caption{Contribution for the recursion formula in the case of a simple cycle.}
		\end{subfigure}
		\caption{Illustration of the recursion for the derivation of the self-consistent equation.}
		\label{fig:recurs}
	\end{figure}
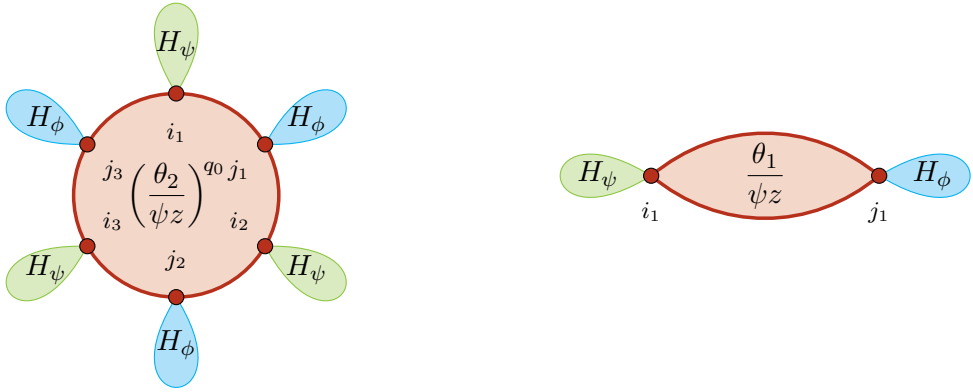
\section{Proof of Theorem \ref{theo:result1} for general activation function}\label{sec:polyapprox}
In this section, we now allow the activation function to belong to a wider class, thus proving Theorem \ref{theo:result1}. For ease, we assume that $\sigma_w=\sigma_x=1$, which can be achieved by scaling.

\begin{proof}[Proof of Theorem \ref{theo:result1}]
We begin by defining the following polynomial which approximates $f$ up to a constant, for $x\in\mathbb{R}$ we define
\begin{equation}\label{eq:defpoly}
P_k(x):=\sum_{j=1}^k f^{(j)}(0)\frac{x^j-j!!}{j!}=\sum_{j=0}^k f^{(j)}(0)\frac{x^j}{j!}-a_{n}
\quad\text{with}\quad
a_n=\sum_{j=0}^k f^{(j)}(0)\frac{j!!}{j!}
\end{equation}
with the convention that $j!!=0$ for $j$ odd and $0!!=1$. This choice ensures that the polynomial is centered with respect to the Gaussian distribution.  Thus, using  Taylor's theorem, we obtain the following approximation for any $A>0$
\begin{equation}\label{eq:approxpoly}
\sup_{x\in[-A,A]}
\left\vert
	(f(x)-a_{k-1})(x)-P_{k-1}(x)
\right\vert
\leqslant
C_f\frac{A^{(1+c_f)k}}{k!}.
\end{equation}
Now, we compare the Hermitized version of the matrix $M$ (up to finite rank modification), and define
\begin{eqnarray}&&\label{eq:defyk}
Y^{(a_k)}=f\left(\frac{WX}{\sqrt{n_0}}\right)-a_k,
\quad
Y_k=P_k\left(\frac{WX}{\sqrt{n_0}}\right),\cr
&&\label{eq:defmatdiff}
\mathcal{E}=\frac{1}{\sqrt{m}}
\begin{pmatrix}
	0 &	Y^{(a_{k-1})}-Y_k\\
	\left(Y^{(a_{k-1})}-Y_k\right)^* & 0
\end{pmatrix}.
\end{eqnarray}
We want to control the spectral radius of the $(m+n_1)\times (m+n_1)$ symmetric matrix $\mathcal{E}$.
Now consider the event, for $\delta_1\in (0,\frac{1}{2})$,
\begin{equation}\label{eq:notyf}
\mathcal{A}_{n_1}(\delta_1)=\bigcap_{1\leqslant i\leqslant n_1}\bigcap_{1\leqslant j\leqslant m}
\left\{
	\left\vert
	\left(
		\frac{WX}{\sqrt{n_0}}
	\right)_{ij}
	\right\vert
	\leqslant (\log n_1)^{1/2+\delta_1}
\right\}.
\end{equation}
On this event, we have, considering the approximation \eqref{eq:approxpoly}, 
\[
\rho(\mathcal{E})\leqslant C_f \sqrt{{m}} \frac{(\log n_1)^{k(1/2+\delta_1)(1+c_f)}}{k!}.
\]
We then choose \begin{equation}\label{eq:choixn}
k
\geqslant
c_0
\frac{\log n_1}{\log \log n_1}
\quad\text{with}\quad
c_0>\frac{1}{2(1-(1+c_f)(\frac{1}{2}+\delta_1))}.
\end{equation} We obtain, by using Stirling formula, that there exists a $\delta_2>0$ such that for any $\varepsilon>0$ we have
\[
\rho(\mathcal{E})=\mathcal{O}\left(
\frac{n_1^\varepsilon}{n_1^{\delta_2}}
\right).
\]
By taking $\varepsilon$ small enough we then see that, on the event $\mathcal{A}_{n_1}(\delta_1)$ and with $k$ as in \eqref{eq:choixn}, $\rho(\mathcal{E})\to 0$ as $n_1\to \infty$. It remains to see that the event $\mathcal{A}_{n_1}(\delta_1)$ occurs with high probability which comes from the assumption  on the entries $W_{ij}$ and $X_{ij}$. Indeed,
\begin{equation}
\mathds{P}\left(
	\mathcal{A}_{n_1}(\delta_1)^c
\right)
=
\mathds{P}\left(
	\exists \:i,j\text{ such that }
	\left\vert	
	\left(
		\frac{WX}{\sqrt{n_0}}
	\right)_{ij}
	\right\vert
	>
	(\log n_1)^{1/2+\delta_1}
\right)
\leqslant Cn_1me^{-\frac{(\log n_1)^{1+2\delta_1}}{2}}
\end{equation}
which goes to zero faster than any polynomial in $n_1$. Now we know the limiting e.e.d. of the matrix $M_{P_k}$ constructed with the centered polynomial $P_k$ as activation function. The above argument yields it is the same for $M_{f-a_k}$ constructed with $f-a_k$ instead. 
Now $Y^{(a_k)}$ is just a rank one deformation of $Y$ and by the rank inequalities (see \cite{bai2010spectral} for instance), $M$ and $M_{f-a_k}$ have the same limiting e.e.d.. This finishes the proof of Theorem \ref{theo:result1}.
\end{proof}

\section{Propagation of eigenvalue distribution through multiple layers}\label{sec:multilayer}
In this section, we study the eigenvalue distribution of a nonlinear matrix model when the data passes through several layers of the neural network. The case of a single layer has been considered in Theorems \ref{theo:result1} and \ref{theo:result2} where we describe the asymptotic e.e.d. in the one layer case. It has been conjectured in \cite{pennington2017nonlinear} that the limiting e.e.d. is stable through the layers in the case where $\theta_2(f)=0$. We give here a positive answer to this conjecture (with the appropriate normalization).
We first develop the combinatorial arguments for an odd monomial of the form \begin{equation}\label{Ass1}f(x)=\frac{x^k}{k!}.\end{equation} for several layers. It can be shown as in Subsection \ref{subsec:even}  that the even monomial are subleading. Thus the leading order for moments is given by  the contribution of odd monomial only. From now on, we assume \eqref{Ass1} holds true. We can write the entries of the two layers data matrix $Y^{(2)}$ as 
\begin{equation}
Y^{(2)}_{ij}
=
\frac{1}{k!}\left(
	\frac{\sigma_x}{\sqrt{\theta_1(f)}}
	\frac{W^{(1)}Y^{(1)}}{\sqrt{n_1}}
\right)^k
=
\frac{\sigma_x^k}{n_1^{k/2}k!\theta_1(f)^{k/2}}
\sum_{\ell_1,\dots,\ell_{k}=1}^{n_1}
\prod_{p=1}^k
W_{i\ell_p}^{(1)}Y^{(1)}_{\ell_p j}.\label{eq:entryy2}
\end{equation}
Then, developing the expected moment of the e.e.d. and using \eqref{eq:entryy2},  we obtain the following

\begin{multline}\label{eq:momentpropag}
\frac{1}{n_2}\mathds{E}\left[\Tr \left(M^{(2)}\right)^q\right]=
\\=
\frac{\sigma_x^{2kq}}{n_2m^qn_1^{kq}(k!)^{2q}\theta_1(f)^{kq}}
\mathds{E}
\sum_{i_1,\dots,i_q}^{n_2}
\sum_{j_1,\dots,j_q}^{m}
\sum_{\substack{\ell_1^1,\dots\ell^1_k\\\dots\\\ell^{2q}_1\dots\ell^{2q}_k}}^{n_1}
\prod_{p=1}^kW_{i_1\ell^1_p}^{(1)}Y^{(1)}_{\ell_p^1j_1}
\prod_{p=1}^kW_{i_2\ell^2_p}^{(1)}Y^{(1)}_{\ell_p^2j_1}
\dots
\prod_{p=1}^kW_{i_1\ell^{2q}_p}^{(1)}Y^{(1)}_{\ell_p^{2q}j_q}.
\end{multline}
We call the terms contributing in a non negligible way \emph{typical}. Now, we can give a graphical representation of these terms as in the previous sections.We will see that the contributing graphs are actually the same admissible graphs from Definition \ref{def:graph}. However, there are less constraints in the choices of the \emph{blue} edges. Indeed, the entries of the matrix $Y^{(1)}$ are not independent: we do not need each entry to be matched with at least another. This constraint however holds for the entries of the matrix $W^{(1)}$. 

\subsection{The simpler case of the simple cycle}
In this subsubsection, we explain the combinatorics in the case where the $i$-labels and $j$-labels are pairwise distinct.
We first perform a matching on the entries of $W^{(1)}$. This matching on the $W^{(1)}$ entries induces one on the entries of $Y^{(1)}$. This matching thus induces another graph between $j$-labeled and $\ell$-labeled vertices. The $i$-labeled vertices do not appear in the graph (as they correspond to entries of $W^{(1)}$). This graph can be constructed from the initial graph by seeing which niches are connected by a \emph{blue} edge. Figure \ref{fig:matchbridge} explains this construction:  $\ell_2$ links the same niche adjacent to $j_2$ while $\ell_1$ links the niches adjacent to $j_1$ and $j_2$. 

\begin{figure}[!ht]
	\centering
	\begin{tikzpicture}
		\draw[BrickRed, line width=.12em] (0,0) circle (1cm);
		\node[fill=BrickRed, label=below:$i_{1}$, inner sep = 0pt, minimum size=.2cm] (40) at (360/4: 1cm) {};
		\node[fill=BrickRed, label=left:$j_{2}$, inner sep = 0pt, minimum size=.2cm] (41) at (2*360/4: 1cm) {};
		\node[fill=BrickRed, label=above:$i_{2}$, inner sep = 0pt, minimum size=.2cm] at (3*360/4: 1cm) {};
		\node[fill=BrickRed, label=right:$j_{1}$, inner sep = 0pt, minimum size=.2cm] at (4*360/4: 1cm) {};
		\foreach \b in {1,2,...,4}{
			\foreach \a in {1,2,3}{
			\node[fill=RoyalBlue, inner sep = 0pt, minimum size=.2cm] (\b+\a) at (\b*360/4+\a*360/16: 1cm) {};
			}
		}
		\draw[-, ForestGreen] (1+1) edge[bend left=80, line width=.12em] node[draw=none, midway, above=-2pt] {$\bm{\ell_1}$} (4+3);
		\draw[-, RoyalBlue] (1+2) edge[bend right=80, line width=.12em] node[draw=none, midway, above=-2pt] {$\bm{\ell_2}$} (1+3);
		\draw[-, RoyalBlue] (2+1) edge[bend right=80, line width=.12em] node[draw=none, midway, below=-2pt] {$\bm{\ell_3}$}  (2+2);
		\draw[-, ForestGreen] (2+3) edge[bend right=80, line width=.12em] node[draw=none, midway, below=-2pt] {$\bm{\ell_4}$}  (3+1);
		\draw[-, RoyalBlue] (3+2) edge[bend right=80, line width=.12em] node[draw=none, midway, below=-2pt] {$\bm{\ell_5}$}  (3+3);
		\draw[-, RoyalBlue] (4+1) edge[bend right=80, line width=.12em] node[draw=none, midway, above=-2pt] {$\bm{\ell_6}$} (4+2);

		\node[draw=none] at (5.5,0) {$\begin{gathered} \text{Corresponding moment:}\\Y_{\ell_1j_1}Y_{\ell_6j_1}^2Y_{\ell_5j_1}^2Y_{\ell_4j_1}Y_{\ell_4j_2}Y_{\ell_3j_2}^2Y_{\ell_2j_2}^2Y_{\ell_1j_2} \end{gathered}$};

		\draw[ForestGreen, line width=.12em] (11.5,0) circle (1cm);
		\node[fill=ForestGreen, label={[ForestGreen]above:$\bm{\ell_{1}}$}, inner sep = 0pt, minimum size=.2cm] (100) at (11.5,1) {};
		\node[fill=BrickRed, label=left:$j_{1}$, inner sep = 0pt, minimum size=.2cm] (200) at (12.5,0) {};
		\node[fill=ForestGreen, label={[ForestGreen]below:$\bm{\ell_{4}}$}, inner sep = 0pt, minimum size=.2cm] at (11.5,-1) {};
		\node[fill=BrickRed, label=right:$j_{2}$, inner sep = 0pt, minimum size=.2cm] (300) at (10.5,0) {};
		
		\node[fill=RoyalBlue, inner sep = 0pt, minimum size=.2cm, label={[RoyalBlue]above:$\bm{\ell_{2}}$}] (11) at (9.75,.5) {};
		\node[fill=RoyalBlue, inner sep = 0pt, minimum size=.2cm, label={[RoyalBlue]below:$\bm{\ell_{3}}$}] (12) at (9.75,-.5) {};
		\node[fill=RoyalBlue, inner sep = 0pt, minimum size=.2cm, label={[RoyalBlue]above:$\bm{\ell_{6}}$}] (13) at (13.25,.5) {};
		\node[fill=RoyalBlue, inner sep = 0pt, minimum size=.2cm, label={[RoyalBlue]below:$\bm{\ell_{5}}$}] (14) at (13.25,-.5) {};
		
		\draw[-, RoyalBlue] (200) edge[bend right=30, line width=.12em] (13);
		\draw[-, RoyalBlue] (200) edge[bend left=30, line width=.12em] (13);
		
		\draw[-, RoyalBlue] (200) edge[bend right=30, line width=.12em] (14);
		\draw[-, RoyalBlue] (200) edge[bend left=30, line width=.12em] (14);
		
		\draw[-, RoyalBlue] (300) edge[bend right=30, line width=.12em] (12);
		\draw[-, RoyalBlue] (300) edge[bend left=30, line width=.12em] (12);
		
		\draw[-, RoyalBlue] (300) edge[bend right=30, line width=.12em] (11);
		\draw[-, RoyalBlue] (300) edge[bend left=30, line width=.12em] (11);
	\end{tikzpicture}
	\vspace{-10ex}
	\caption{Graph obtained after a \emph{blue} matching in the initial graph. The \emph{green} edges, corresponding to bridges between niches, induce a cycle in the final graph. The remaining edges coming from matched pairs inside a niche create simple cycles attached to $j$ labeled indices.}
	\label{fig:matchbridge}
\end{figure}
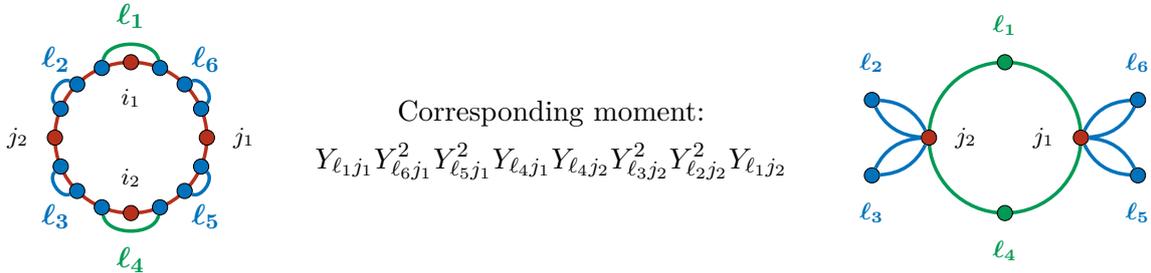


We start with general observations. The largest number of possible distinct $\ell$ indices is $kq$, which is obtained  as follows: 
One matches at least two indices from different adjacent niches of an $i$-label index and perform a perfect matching between the $2k-2$ remaining indices. Such a matching gives $kq$ different $\ell$ indices and matches every $W^{(1)}$ entry with another. This is illustrated in the leftmost graph in Figure \ref{fig:matchbridge}. Note that this type of matching gives $kq$ distinct $\ell$ indices but is actually not necessarily typical (see Figure \ref{fig:matchnonadmi} for an illustration) and is not the sole typical configuration.\\
As in Figure \ref{fig:matchbridge}, we see that the matching on the initial graph induces another admissible graph. Note that it does not consist in one cycle but in a cycle (in green on the figure) where $k-1$ cycles of length 2 are attached to each $j$-labeled vertex.
Also one has to note that it is possible to perform identifications between the $\emph{blue}$ edges and obtain a graph contributing in a non negligible way to the asymptotic expansion (see Figure \ref{fig:bridgeadmi} for an illustration). This behavior is explained in the second step when we develop the entries of $Y^{(1)}$.
Let us briefly indicate, as in Figure \ref{fig:matchnonadmi}, a \emph{blue} matching on the initial cycle which maximizes the number of distinct indices may give rise to a non-admissible induced graph. This comes from the fact that too many edges link two distinct niches. 

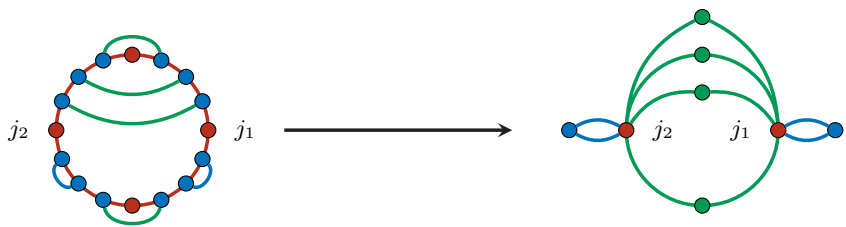
\begin{figure}[!ht]
	\centering
	\begin{tikzpicture}
		\draw[BrickRed, line width=.12em] (0,0) circle (1cm);
		\node[fill=BrickRed, inner sep = 0pt, minimum size=.2cm] (40) at (360/4: 1cm) {};
		\node[fill=BrickRed, label=left:$j_{2}$, inner sep = 0pt, minimum size=.2cm] (41) at (2*360/4: 1cm) {};
		\node[fill=BrickRed, inner sep = 0pt, minimum size=.2cm] at (3*360/4: 1cm) {};
		\node[fill=BrickRed, label=right:$j_{1}$, inner sep = 0pt, minimum size=.2cm] at (4*360/4: 1cm) {};
		\foreach \b in {1,2,...,4}{
			\foreach \a in {1,2,3}{
			\node[fill=RoyalBlue, inner sep = 0pt, minimum size=.2cm] (\b+\a) at (\b*360/4+\a*360/16: 1cm) {};
			}
		}	
		
		\draw[-, ForestGreen] (1+1) edge[bend left=80, line width=.12em] (4+3);
		\draw[-, ForestGreen] (1+2) edge[bend right, line width=.12em] (4+2);
		\draw[-, ForestGreen] (1+3) edge[bend right, line width=.12em] (4+1);
		\draw[-, RoyalBlue] (2+1) edge[bend right=80, line width=.12em]  (2+2);
		\draw[-, ForestGreen] (2+3) edge[bend right=80, line width=.12em]  (3+1);
		\draw[-, RoyalBlue] (3+2) edge[bend right=80, line width=.12em]  (3+3);	
		
		\draw[->, >=stealth, Black, line width=.12em] (2,0) -- (5,0)	;
		
		\draw[ForestGreen, line width=.12em] (7.5,0) circle (1cm);
		\node[fill=ForestGreen, inner sep = 0pt, minimum size=.2cm] (110) at (7.5,1) {};
		\node[fill=ForestGreen, inner sep = 0pt, minimum size=.2cm] (100) at (7.5,.5) {};
		\node[fill=ForestGreen, inner sep = 0pt, minimum size=.2cm] (120) at (7.5,1.5) {};

		\node[fill=BrickRed, label=right:$j_{2}$, inner sep = 0pt, minimum size=.2cm] (200) at (6.5,0) {};
		\node[fill=ForestGreen, inner sep = 0pt, minimum size=.2cm] at (7.5,-1) {};
		\node[fill=BrickRed, label=left:$j_{1}$, inner sep = 0pt, minimum size=.2cm] (210) at (8.5,0) {};
		\node[fill=RoyalBlue, inner sep = 0pt, minimum size=.2cm] (11) at (5.75,0) {};
		\node[fill=RoyalBlue, inner sep = 0pt, minimum size=.2cm] (12) at (9.25,0) {};
		
		\draw[-, RoyalBlue] (200) edge[bend right=30, line width=.12em] (11);
		\draw[-, RoyalBlue] (200) edge[bend left=30, line width=.12em] (11);
		
		\draw[-, RoyalBlue] (210) edge[bend right=30, line width=.12em] (12);
		\draw[-, RoyalBlue] (210) edge[bend left=30, line width=.12em] (12);
		
		\draw[-, ForestGreen] (200) edge[bend left=30, line width=.12em] (100);
		\draw[-, ForestGreen] (200) edge[bend left=30, line width=.12em] (120);
		\draw[-, ForestGreen] (210) edge[bend right=30, line width=.12em] (100);
		\draw[-, ForestGreen] (210) edge[bend right=30, line width=.12em] (120);
	\end{tikzpicture}
	\caption{Non-admissible graph obtained after a \emph{blue} matching which induces a maximum number of distinct indices in the initial cycle. We can see that several \emph{green} bridges between the same niches create a non-admissible graph and is thus subleading via the analysis from the previous section.}
	\label{fig:matchnonadmi}
\end{figure}

The main tool to understand the combinatorial arguments for the multilayer case is the following Lemma. It states that the leading order is actually given by the matchings as in Figure \ref{fig:matchbridge}.
\begin{lemma}\label{lem:leadmatch}
Consider a cycle of length $q> 2$, then the typical matchings on the \emph{blue} vertices consist in the following: \\
i)Two niches adjacent to the same $i$-labeled vertex are linked by a single edge called a bridge.\\ 
ii) Remaining edges inside a niche are matched according to a perfect matching.\\
iii) We can add identifications between \emph{bridges} only.\\
If the cycle is of length $2$ then we perform a perfect matching between the $2k$ \emph{blue} vertices in the cycle.
\end{lemma}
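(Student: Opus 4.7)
The plan is to analyze the two-level combinatorial structure imposed by the multilayer expansion \eqref{eq:momentpropag}: first, the matching of $W^{(1)}$ entries (forced to be at least pairwise since $W^{(1)}$ has i.i.d.\ centered entries), and second, the induced graph on the $Y^{(1)}$ entries whose expectation is computed via the single-layer result of Theorem \ref{theo:pol}. Each blue vertex carries an $\ell$-index labeling a factor $W^{(1)}_{i\ell} Y^{(1)}_{\ell j}$; matching pairs of $W^{(1)}$ entries therefore requires identifying blue vertices that share their $i$-label, namely vertices lying in the same niche or in the two niches adjacent to a common $i$-vertex (the latter being the \emph{bridges} of the statement).

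The first key step is to bound the number of pairwise distinct $\ell$-indices achievable under such constraints. Within a single niche of size $k$, denoting by $r$ the number of vertices participating in bridges, the remaining $k-r$ vertices must be paired (or grouped in higher moments) to match their $W^{(1)}$ factors, yielding at most $\lfloor (k-r)/2\rfloor$ fresh indices. Summing over all $2q$ niches and the $q$ $i$-vertices of the cycle (each contributing one shared bridge index), one obtains the total bound $kq$, with equality if and only if each $i$-vertex carries exactly one bridge of multiplicity one and the remaining $k-1$ vertices of every niche are perfectly matched. Since $k$ is odd, this parity is consistent. Configurations that deviate from this, e.g.\ those with more than one bridge per $i$-vertex or with non-perfect internal matchings, lose at least one power of $n_1$ and are subleading, by the same moment-counting arguments as in \eqref{eq:contribmatching} and \eqref{eq:highmom3}.

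Next I would verify that the induced graph on $j$-labels and bridge $\ell$-indices is exactly a cactus graph of the type handled in Theorem \ref{theo:pol}: the bridges form a cycle of length $2q$ on the $(j,\ell)$-vertices, decorated at each $j$-vertex by the simple 2-cycles arising from the perfect in-niche matchings. Because $\theta_2(f) = 0$ in the statement of Theorem \ref{theo:marclayer}, only the simple-cycle contributions survive at leading order in the $Y^{(1)}$-expectation, and their total weight precisely compensates the normalization $\theta_1(f)^{-kq/2}$ coming from \eqref{eq:entryy2}. Item (iii) of the lemma then follows naturally: identifications between bridges do not cost any $\ell$-index (they only merge $i$-labels) and correspond bijectively to the $j$-identifications of the admissible induced graph. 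For the length-$2$ case, the two niches share both endpoints, so bridges and within-niche pairings coincide; any perfect matching of the $2k$ blue vertices is leading, in direct analogy with the $q=1$ analysis of Subsection \ref{subsec:oddcycle}.

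The main obstacle is the interlocking of the two levels of combinatorics: certain $W^{(1)}$-matchings that maximize the number of fresh $\ell$-indices nevertheless produce a \emph{non-admissible} induced $(j,\ell)$-graph, as in Figure \ref{fig:matchnonadmi}, whose $Y^{(1)}$-expectation is subleading by Lemma \ref{NAneg} applied at the first-layer level. One cannot optimize the two layers independently; the proof must show that any gain from additional $W^{(1)}$-flexibility (such as placing several bridges between the same pair of niches) is overcompensated by the loss from non-admissibility of the induced graph. A careful joint accounting, balancing the $n_1$-power from $\ell$-choices against the $m$-power coming from the $Y^{(1)}$-expectation, is what ultimately pins down the configuration described in (i)--(iii) as the unique source of the leading contribution.
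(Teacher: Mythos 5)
Your high-level picture of the two-level combinatorics (first matching $W^{(1)}$ entries, then feeding the induced $(j,\ell)$-graph into the one-layer analysis) matches the paper's, and you correctly identify the central difficulty — that maximizing the number of distinct $\ell$-indices does not by itself pick out the typical matchings, because some $\ell$-maximal matchings produce a non-admissible induced graph. But the proposal does not actually resolve this difficulty; it only names it. The concluding sentence that ``the proof must show that any gain from additional $W^{(1)}$-flexibility is overcompensated by the loss from non-admissibility'' and that ``a careful joint accounting\ldots is what ultimately pins down'' the configuration is a description of what a proof ought to do, not a proof. The paper's proof supplies precisely this missing quantitative step: it relates the number $I_\ell$ of $\ell$-identifications in the first-layer matching to the number $m$ of fundamental cycles in the induced graph, using that the admissible (cactus) case gives $m = I_\ell + q(k-1)+1$ while non-admissible induced graphs have $m \leqslant I_\ell + q(k-1)$, and then shows by a direct count of index choices that the contribution of the combined configuration is $\mathcal{O}\bigl( (C/n_0)^{(I_\ell+(k-1)q+1)-m}\bigr)$. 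That single estimate is the joint accounting you invoke without performing; without it the lemma is not proved.

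Beyond this gap, there are a few concrete errors in the intermediate claims. Your assertion that equality $\#\{\text{distinct }\ell\}=kq$ holds \emph{if and only if} each $i$-vertex carries exactly one bridge and all remaining in-niche vertices are perfectly matched is false: the configuration of Figure \ref{fig:matchnonadmi} has three bridges around $i_1$ yet still attains $kq$ distinct $\ell$-indices; what kills it is not an $\ell$-index deficit but the non-admissibility of the induced graph. Your claim that identifications between bridges ``do not cost any $\ell$-index'' is also incorrect — such an identification merges two $\ell$-vertices and so \emph{does} cost one index; it remains of leading order only because the induced graph gains a fundamental cycle in exchange (this is exactly Figure \ref{fig:bridgeadmi} and is the reason item~(iii) is included in the statement). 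Finally, the appeal to $\theta_2(f)=0$ is out of place here: Lemma \ref{lem:leadmatch} is formulated and proved for an odd monomial, for which $\theta_2\neq 0$, and the hypothesis $\theta_2(f)=0$ only enters later in Theorem \ref{theo:marclayer}. You have also left out the secondary error estimate for stacked multiple 2-cycles in the induced graph (the analogue of \eqref{eq:constraintqk} with $2kq$ rather than $2q$ edges), which is where the constraint $k \leqslant \log n_1/\log\log n_1$ is actually used in the lemma.
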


\begin{proof}
The proof is based on the construction of the second graph and the fact that the typical graphs are admissible.
We first show that any other matching gives a non-admissible second layer graph. Firstly, more than one \emph{bridge} between two distinct niches breaks the tree structure and thus yields a non admissible graph. The same reasoning holds for possible identifications between bridges and a matched pair inside a niche. If we identify two matched pairs inside a niche, we can see via the construction of the graph that it creates double edges and we would obtain an entry of $Y^{(1)}$ to the power of 4. 
However, note that in the initial cycle of size $q$, we can add identifications between the $q$ \emph{bridges} and still keep the second graph admissible. This behavior is illustrated in Figure \ref{fig:bridgeadmi} where we perform identifications between bridges and still obtain an admissible graph.

We now need to show that the contribution of the matchings leading to  a non admissible graph is subleading. As in Subsection \ref{subsec:nonadmi}, we have additional identifications between the vertices and we need to choose the fundamental cycles as well as the way one runs through the graph. Suppose we have $I_\ell$ identifications between the $\ell$ vertices. Then if the graph was admissible we would have $I_\ell + q(k-1) + 1$ fundamental cycles in the induced graph on $(j,\ell)$ vertices. Thus, if the graph is non-admissible, we have at most $I_\ell+q(k-1)$ fundamental cycles.

Let $m\leqslant I_\ell+q(k-1)$ be the number of fundamental cycles of the induced graph. We denote by $\mathscr{C}_1,\dots,\mathscr{C}_b,\mathscr{C}_{b+1},\dots,\mathscr{C}_m$ its cycles such that if $\ell(\mathscr{C}_i)$ denotes the length of the cycle $\mathscr{C}_i$ we have: $\ell(\mathscr{C}_1)=\ldots=\ell(\mathscr{C}_b)=2$ and $\ell(\mathscr{C}_{b+1}),\dots,\ell(\mathscr{C}_m)>2$. One then has that $\sum_{i=1}^{m}\ell(\mathscr{C}_i)=2kq.$ Now, the contribution of such graphs (initial and induced), taking into account the normalization and the number of ways to run through the graph, is at most
\[
\frac{(1+o(1))C^{I_l-m+I_j}}{n_1m^qn_o^{kq+k^2q}}
n_1^q
m^q
n_0^{kq-I_\ell}
n_0^{kb}
n_0^{m-b+\frac{k-1}{2}\sum_{i=b+1}^m\ell(\mathscr{C}_i)}
=
\mathcal{O}\left(\left ( 
	\frac{C}{n_0}\right )^{(I_\ell+(k-1)q+1)-m}
\right),
\] 
for some constant $C.$
Indeed one has to choose the $i$- and $j$-labels of vertices in the initial cycle, the $\ell$ vertices in the initial graph with the constraint that there are $I_\ell$ identifications. Then, in the induced graph, there are at most $k$ indices in each cycle of length 2 and $1+(k-1)/2\ell(\mathscr{C}_i)$ indices in the cycle $\mathscr{C}_i$ for $i>b$. Thus the contribution is negligible due the constraint that $m\leqslant I_\ell+q(k-1)$.

Some (negligible) contribution depending on $k$ comes from the possible multiple cycles of length 2 attached together as in Figure \ref{fig:multsingedge}. Here the error is slightly bigger since the induced graph has $2kq$ edges instead of simply $2q$.
Fix a vertex $j_0$, if we match together $2p$ $\ell$-indices together in the niche adjacent to $j_0,$ using \eqref{eq:constraintqk},  the corresponding error is given by $\mathcal{O}(n_0(k(2p)^k/n_0)^p).$ However, up to $2k$ indices can be matched together so that the contribution of non admissible graphs in this case is given by
\[
\sum_{p=2}^k
n_0
\left(
	\frac{k(2p)^k}{n_0}
\right)^p=o(1)
\quad\text{for}\quad
k\leqslant \frac{\log n}{\log \log n}.
\]
It actually decays faster than any polynomial for such $k$. This finishes the proof of the Lemma.
\end{proof}
\begin{figure}[!ht]
	\centering
	\begin{tikzpicture}
		\draw[BrickRed, line width=.12em] (0,0) circle (1cm);
		\node[fill=BrickRed, label=above:$i_{1}$, inner sep = 0pt, minimum size=.2cm] (40) at (360/4: 1cm) {};
		\node[fill=BrickRed, label=left:$j_{2}$, inner sep = 0pt, minimum size=.2cm] (41) at (2*360/4: 1cm) {};
		\node[fill=BrickRed, label=below:$i_{2}$, inner sep = 0pt, minimum size=.2cm] at (3*360/4: 1cm) {};
		\node[fill=BrickRed, label=right:$j_{1}$, inner sep = 0pt, minimum size=.2cm] at (4*360/4: 1cm) {};
		\foreach \b in {1,2,...,4}{
			\foreach \a in {1,2,3}{
			\node[fill=RoyalBlue, inner sep = 0pt, minimum size=.2cm] (\b+\a) at (\b*360/4+\a*360/16: 1cm) {};
			}
		}
		\draw[-, ForestGreen] (1+1) edge[bend right=80, line width=.12em] node[draw=none, midway] (123) {} (4+3);
		\draw[-, RoyalBlue] (1+2) edge[bend right=80, line width=.12em] (1+3);
		\draw[-, RoyalBlue] (2+1) edge[bend right=80, line width=.12em] (2+2);
		\draw[-, ForestGreen] (2+3) edge[bend left=80, line width=.12em] node[draw=none, midway] (456) {} (3+1);
		\draw[-, RoyalBlue] (3+2) edge[bend right=80, line width=.12em] (3+3);
		\draw[-, RoyalBlue] (4+1) edge[bend right=80, line width=.12em] (4+2);
		\draw[-, ForestGreen, line width=.12em] (123.center) -- (456.center);

	    \draw[->, >=stealth, Black, line width=.12em] (2,0) -- (5,0)	;

		\node[fill=ForestGreen, inner sep = 0pt, minimum size=.2cm] (1) at (7.5,0) {};
		\node[fill=BrickRed, inner sep = 0pt, minimum size=.2cm, label=below:$j_2$] (2) at (6.5,0) {};
		\node[fill=RoyalBlue, inner sep = 0pt, minimum size=.2cm] (3) at (5.75,-.5) {};
		\node[fill=RoyalBlue, inner sep = 0pt, minimum size=.2cm] (4) at (5.75,.5) {};
		\node[fill=BrickRed, inner sep = 0pt, minimum size=.2cm,  label=below:$j_1$] (5) at (8.5,0) {};
		\node[fill=RoyalBlue, inner sep = 0pt, minimum size=.2cm] (6) at (9.25,.5) {};
		\node[fill=RoyalBlue, inner sep = 0pt, minimum size=.2cm] (7) at (9.25,-.5) {};

		\draw[-, ForestGreen] (1) edge[bend left=30, line width=.12em] (2);
		\draw[-, ForestGreen] (1) edge[bend right=30, line width=.12em] (2);

		\draw[-, RoyalBlue] (2) edge[bend left=30, line width=.12em] (3);
		\draw[-, RoyalBlue] (2) edge[bend right=30, line width=.12em] (3);

		\draw[-, RoyalBlue] (2) edge[bend left=30, line width=.12em] (4);
		\draw[-, RoyalBlue] (2) edge[bend right=30, line width=.12em] (4);

		\draw[-, ForestGreen] (1) edge[bend left=30, line width=.12em] (5);
		\draw[-, ForestGreen] (1) edge[bend right=30, line width=.12em] (5);

		\draw[-, RoyalBlue] (5) edge[bend left=30, line width=.12em] (6);
		\draw[-, RoyalBlue] (5) edge[bend right=30, line width=.12em] (6);

		\draw[-, RoyalBlue] (5) edge[bend left=30, line width=.12em] (7);
		\draw[-, RoyalBlue] (5) edge[bend right=30, line width=.12em] (7);
	\end{tikzpicture}
	\caption{Admissible graph after a matching with an identification between two \emph{bridges}. While two $\ell$ vertices are identified, the matching is of leading order as one more cycle is in the induced graph.}
	\label{fig:bridgeadmi}
\end{figure}
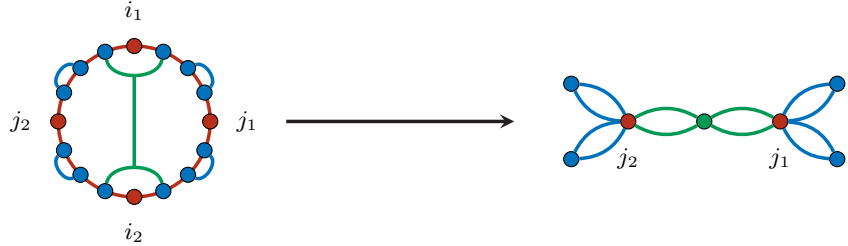
Lemma \ref{lem:leadmatch} has been proved for the two layers case. It can readily be extended to the case of $L\geq 2$ layers: the number of possible distinct $l$-indices is multiplied by $k$ at each layer and the final graph after performing matchings has to be admissible so that it contributes in the limit. The proof is similar to that of Lemma \ref{lem:leadmatch}. The detail is left to the reader.
\subsection{Invariance of the distribution in the case when \texorpdfstring{$\bm{\theta_2(f)}$}{} vanishes.}
In light of the previous combinatorial arguments, it is interesting to consider the special case where $\theta_2(f)=0$. Indeed,
for the one layer case, by Theorem \ref{theo:result1}, the limiting e.e.d. is the Mar\v{c}enko-Pastur distribution with shape $\frac{\phi}{\psi}$, denoted by $\mu_{\phi/\psi}$,  as proved also by the following lemma.
\begin{lemma}\label{lem:momentmarcenko} Let $q$ be a positive integer we have the following equality
\[
\sum_{\substack{I_j,I_i=0\\I_i+I_j+1=q}}^{q-1}
\mathcal{A}(q,I_i,I_j,q)
\psi^{1-q+I_i}\phi^{I_j}\theta_1^q(f)
=
\theta_1^q(f)\sum_{k=0}^{q-1}
\left(
	\frac{\phi}{\psi}
\right)^k\frac{1}{k+1}\binom{q}{k}\binom{q-1}{k}
=
\theta_1^q(f)
\langle x^q, \mu_{\phi/\psi}\rangle.
\]
\end{lemma}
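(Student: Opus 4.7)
The plan is to reduce the lemma to a classical Narayana count plus a standard moment identity for the Marchenko--Pastur distribution. The key observation is that when all $q$ fundamental cycles of an admissible graph have length $2$, the graph is completely determined by a partition of the $2q$ edges of the base cycle into $q$ parallel pairs, subject to the admissibility condition. I would first argue that admissibility of such a graph (distinct cycles share at most a vertex, each edge in a unique cycle) is equivalent to asking that the induced matching on the $2q$ base-cycle edges, arranged cyclically as $(i_1,j_1),(j_1,i_2),\ldots,(j_q,i_1)$, be \emph{non-crossing}: a crossing matching would either produce a triple edge or merge two 2-cycles along more than one vertex, violating admissibility. Via the classical Euler-tour correspondence, non-crossing perfect matchings of $2q$ cyclically arranged points are in bijection with rooted plane trees with $q$ edges; contracting each length-$2$ cycle of the cactus to a single edge realizes this bijection explicitly, and the resulting tree is automatically bipartite with root $i_1$ (``black'').

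Next I would keep track of parameters. Since $I_i + I_j + 1 = q$, a graph counted by $\mathcal{A}(q, I_i, I_j, q)$ has $a := q - I_i = I_j + 1$ distinct $i$-vertices (black, including the root) and $b := q - I_j = I_i + 1$ distinct $j$-vertices (white), with $a + b = q + 1$. By the classical Narayana formula, the number of rooted plane bipartite trees with $q$ edges and $a$ black vertices (root black) is
\[
\frac{1}{q}\binom{q}{a-1}\binom{q}{a} \;=\; \frac{1}{q}\binom{q}{I_j}\binom{q}{I_j+1} \;=\; \frac{1}{I_j+1}\binom{q}{I_j}\binom{q-1}{I_j}.
\]
This identifies $\mathcal{A}(q, I_i, I_j, q)$ with this Narayana number. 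Substituting into the sum and using $\psi^{1-q+I_i}\phi^{I_j} = \psi^{-I_j}\phi^{I_j} = (\phi/\psi)^{I_j}$ on the constraint surface yields the first equality.

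For the second equality I would invoke the well-known fact that the Narayana numbers are the coefficients of the moments of the Marchenko--Pastur law in the shape parameter: for any $c > 0$ and any integer $q \geq 1$,
\[
\int x^q \,\D \mu_c(x) \;=\; \sum_{k=0}^{q-1} \frac{1}{k+1}\binom{q}{k}\binom{q-1}{k}\, c^k.
\]
This identity can be derived directly by expanding the Stieltjes transform $G_{\mu_c}(z)$ as a power series at infinity from its defining quadratic equation, extracting the coefficient of $1/z^{q+1}$.

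The main obstacle is the first step: verifying carefully that admissibility of the cactus translates to the non-crossing condition on the matching of base-cycle edges, and that distinct admissible graphs correspond to distinct matchings. Once this bijection is in hand, the remainder is classical combinatorics and the lemma follows immediately.
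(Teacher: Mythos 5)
Your proposal is correct and takes essentially the same route as the paper: reduce the cactus count $\mathcal{A}(q,I_i,I_j,q)$ (only $2$-cycles) to a count of rooted plane bipartite trees with $q$ edges and a prescribed split into $i$- and $j$-vertices, identify this with a Narayana number, and then invoke the classical Narayana-expansion of the Mar\v{c}enko--Pastur moments. The paper calls these ``double trees'' and cites the Narayana identity from Chen--Yan--Yang without unwinding the bijection, which you make explicit via non-crossing pair partitions of the $2q$ base edges and the Euler-tour correspondence. One small imprecision worth fixing if you write this out: the reason a crossing pairing violates admissibility is not (in general) that it creates a triple edge or merges two $2$-cycles at two vertices. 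For example, a matching pairing opposite edges of a cycle of length $8$ produces four ordinary $2$-cycles, each meeting its neighbours in exactly one vertex, with no multiplicity issues; what goes wrong is that the contracted graph is itself a cycle rather than a tree, so an edge of the base cycle lies on both its $2$-cycle and the longer ambient cycle, contradicting ``each edge belongs to a unique cycle''. The right statement is simply that the pairing is non-crossing if and only if the contracted graph (identify each matched pair to a single edge) is a tree, which is the Euler-genus-zero condition and is exactly what admissibility requires here.
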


\begin{proof}
Firstly, we can slightly rewrite the left hand side as
\[
\sum_{\substack{I_j,I_i=0\\I_i+I_j+1=q}}^{q-1}
\mathcal{A}(q,I_i,I_j,q)
\psi^{1-q+I_i}\phi^{I_j}\theta_1^q(f)
=
\theta_1^q(f)
\sum_{k=0}^{q-1}
\left(
	\frac{\phi}{\psi}
\right)^k
\mathcal{A}(q,q-k-1,k,q).
\]
Now there only remains to see that 
\begin{equation}\label{eq:narayana}
\mathcal{A}(q,q-k-1,k,q)
=
\frac{1}{k+1}\binom{q}{k}\binom{q-1}{k}.
\end{equation}
This fact comes from another representation of admissible graphs. Consider admissible graphs with $2q$ edges, $q$ cycles of length 2, $k$ $j$-identifications and $q-k-1$ $i$-identifications. Thus we can count this as double trees, in the sense that one of every two vertices are $i$-labeled and the others are $j$-labeled, with the appropriate number of each type of vertex ($q-k$ $j$-labeled vertices and $k+1$ $i$-labeled vertices). This number is known as a Narayana number \cite{chen2008identities} and given by \eqref{eq:narayana}. \end{proof}

This fact then means that if we consider a function $f$ such that $\theta_2(f)=0$, the e.e.d. (up to a change in variance and shape) is ``stable'' after going through one layer of the network. Indeed, if one considers the matrix $\frac{1}{m\sigma_x^2}XX^*$, the asymptotic e.e.d. is given by $\mu_{\phi}$ the Mar\v{c}enko-Pastur distribution with shape parameter $\phi$. Now, after a layer of the network, we see that for $\frac{1}{m\theta_1(f)}YY^*$ it is given by $\mu_{\phi/\psi}$. 

We now consider the case of an arbitrary fut fixed number of layers, mostly interested in the case where $\theta_2(f)=0$.
Let $Y^{(L+1)}$ be as in (\ref{def:yell}), and consider the matrices
\begin{equation}\label{eq:deffell}
M^{(L+1)}=\frac{1}{m\theta_1(f)}Y^{(L+1)}{Y^{(L+1)}}^*.
\end{equation}


\begin{theorem}\label{theo:resultmomentell} Let $ L$ be a given integer. 
Let $f=\sum_{k=1}^K \frac{a_k}{k!}(x^k-k!!\mathds{1}_{k\text{ even}})$ be a polynomial such that \eqref{eq:assumfgauss} holds. The degree of $f$, $K$, can grow with $n_1$ but suppose that 
$
K\leqslant \frac{1}{L-1}
	\frac{\log n_1}{\log \log n_1}.
$
Denote the e.e.d. of $M^{(L)}$ constructed as in \eqref{eq:deffell} by $\mu_{n_L}^{(L)}=\frac{1}{n_L}\sum_{i=1}^{n_L}\delta_{\lambda_i^{(\ell)}}$ and its expected moments by
$
\overline{m}_q^{(L)}:=\mathds{E}\left[
	\scp{\mu_{n_L}^{(L)}}{x^q}
\right] $
There exists a (non explicit) factor $T(q,k,L)$ such that \begin{equation}\label{eq:resultmomentell}
\overline{m}_q^{(L)}=
\left(
	\sum_{k=1}^{q-1}
	\left(
		\frac{\phi}{\prod_{i=0}^{L-1}\psi_i}
	\right)^k
	\frac{1}{k+1}
	\binom{q}{k}
	\binom{q-1}{k}
	+
	\theta_2(f)T(q,k,L)
\right)\left(1+o(1)\right).
\end{equation}
\end{theorem}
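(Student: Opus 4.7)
The approach is to extend the combinatorial framework developed in Section \ref{sec:momentmethod} from one layer to $L$ layers by iterating Lemma \ref{lem:leadmatch}. By the argument of Subsection \ref{subsec:even} (centering kills the dominant contribution of even monomials) together with the polynomial-decomposition argument of Subsection 3.3, it suffices to treat the case of an odd monomial $f(x)=x^k/k!$, with the extension to polynomials handled term-by-term as in \eqref{eq:momentpolyno}. Starting from the trace expansion of $\mathds{E}\Tr(M^{(L)})^q$, one substitutes \eqref{eq:entryy2} iteratively $L$ times. Each substitution introduces a new layer of summation indices $(\ell^{(p)})_{p=1}^L$, and the resulting summand is encoded as a nested sequence of coincidence graphs: the top-level graph is of the same type as in Section \ref{sec:momentmethod} (with $i$- and $j$-labeled vertices), and at each subsequent level $p$, the matching performed on the $W^{(p)}$-entries at level $p-1$ induces a new coincidence graph on the indices $\ell^{(p)}$ of the next level.

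The key combinatorial input is the iterated version of Lemma \ref{lem:leadmatch}: at each level $p$, the typical matching consists of bridging exactly two niches adjacent to each $\ell^{(p)}$-labeled vertex (one bridge per vertex), performing a perfect matching of the remaining $2(k-1)$ vertices inside each niche, and possibly identifying bridges between themselves. Any other configuration produces a non-admissible graph at the next level, whose contribution is negligible by the same argument as in Subsection \ref{subsec:nonadmi} (the degree bound on $K$ is precisely chosen so that the error $O(n_0^{-1})$ at each of the $L-1$ inductive steps remains uniformly small). Counting distinct indices, one sees that at level $p$ every $i$-identification costs a factor $n_p^{-1}$ relative to the total normalization, whereas each fundamental cycle of the top-level graph of length $>1$ contributes $\theta_2(f)/\psi_0\cdots\psi_{L-1}$, and each simple (length-$2$) cycle contributes $\theta_1(f)/\psi_0\cdots\psi_{L-1}$. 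After multiplying by the prefactor $1/\theta_1(f)^q$ from the definition \eqref{eq:deffell}, each simple cycle contributes a clean factor of $(\psi_0\cdots\psi_{L-1})^{-1}$, while each long cycle carries a residual $\theta_2(f)/\theta_1(f)$.

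Putting everything together, the expected $q$-th moment splits into a sum over admissible graphs $G$ with $q$ fundamental cycles: if all cycles of $G$ are of length $2$, the contribution is free of $\theta_2(f)$ and depends only on the number of $i$- and $j$-identifications; if at least one cycle of $G$ has length $>2$, a factor of $\theta_2(f)$ appears, and these terms are collected into the error term $\theta_2(f)T(q,k,L)$. For the first (principal) contribution, the relevant admissible graphs are exactly the double trees counted by Narayana numbers as in Lemma \ref{lem:momentmarcenko}, but now each $i$-identification carries a weight $\psi_0^{-1}\cdots\psi_{L-1}^{-1}$ accumulated across the $L$ layers instead of $\psi^{-1}$. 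A direct computation gives
\[
\sum_{k=0}^{q-1}\mathcal{A}(q,q-k-1,k,q)\,\Big(\tfrac{\phi}{\psi_0\cdots\psi_{L-1}}\Big)^k
=\sum_{k=0}^{q-1}\Big(\tfrac{\phi}{\psi_0\cdots\psi_{L-1}}\Big)^k\tfrac{1}{k+1}\binom{q}{k}\binom{q-1}{k},
\]
which is the claimed Marcenko-Pastur moment of shape $\phi/(\psi_0\cdots\psi_{L-1})$.

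The main obstacle will be the careful bookkeeping of the induced graphs across all $L$ levels: one must check that (i) the constraint on the degree $K\leq \frac{1}{L-1}\log n_1/\log\log n_1$ is indeed sufficient to ensure that the accumulated $L-1$ error terms coming from non-admissible induced graphs remain $o(1)$, and (ii) cross-level identifications (blue-edge identifications that propagate through several layers simultaneously) do not spoil the clean factorization of the contribution into per-cycle weights. The first point follows from refining the estimate \eqref{eq:constraintqk} at each level, using the explicit scaling $k^{kp}/n_p^{p-1}$ and the degree bound; the second requires a strengthening of Lemma \ref{lem:leadmatch} that rules out identifications between blue edges belonging to distinct cycles of the top-level graph, analogous to the cross-cycle argument surrounding Figure \ref{fig:crosscycle}.
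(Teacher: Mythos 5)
Your proposal is correct and follows essentially the same route as the paper: iterate Lemma \ref{lem:leadmatch} layer-by-layer, reduce to admissible graphs consisting only of cycles of length~$2$ when $\theta_2(f)=0$, invoke Lemma \ref{lem:momentmarcenko} for the Narayana-number count, and track that the shape parameter becomes $n_L/m\to\phi/(\psi_0\cdots\psi_{L-1})$. Two small imprecisions worth flagging, neither of which is a gap since $T(q,k,L)$ is left non-explicit by the theorem: first, only admissible graphs with \emph{all} cycles of length $2$ have exactly $q$ fundamental cycles, so ``admissible graphs with $q$ fundamental cycles'' should really read ``admissible graphs, distinguishing those all of whose cycles have length $2$ from the rest''; second, the residual for a long cycle is not a single factor $\theta_2(f)/\theta_1(f)$ per cycle but rather one $\theta_2$-factor per pair of red edges in that cycle (compare the recursion $\sum_{q_0\ge2}(\theta_2/(\psi z))^{q_0}$ in Section~3.7 versus $\theta_1/(\psi z)$ for $q_0=1$), so the precise per-cycle bookkeeping you sketch would need that correction before it yields a clean closed formula. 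Your two identified obstacles --- accumulating the $L-1$ error estimates under the degree bound, and ruling out cross-level blue identifications --- are exactly the technical points the paper dispatches via a refined version of \eqref{eq:constraintqk} and the cross-cycle argument, respectively, so your plan is on target.
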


\begin{proof}
We again first develop the arguments in the case of a monomial of odd degree $f(x)=x^k$ since the case of an even monomial is completely similar (we only consider graphs with simple cycles).We study and count the admissible graphs along each layer. It is enough to identify in the asymptotic expansion of the moment those terms where no $\theta_2$ arises. Thus one can consider only admissible graphs made of cycles of length 2. 
{For the error terms one has to consider also admissible graphs with longer cycles but where the matching in each niche does not yield an occurence of $\theta_2$.  }

We begin with the case where $q=1$ for two layers. Then the cycle has length 2 as in Figure \ref{fig:q1}.  The dominant term in the asymptotic expansion consists in performing a perfect matching between all edges from Lemma \ref{lem:leadmatch}. The contribution coming from this first construction (in Lemma \ref{lem:leadmatch}) is given by
\[
\frac{\sigma_x^{2k}}{n_2mn_1^k}n_2mn_1^k\left(\sigma_{w}^{2k}(2k)!!\right)=\theta_1(f)(1+o(1)).
\]
This follows from the choices for the $i$ index, the $j$ index and the $\ell$ indices.
Now, this construction on the initial graph induces a second graph as in Figure \ref{fig:simpcycle}.  This induced graph is an admissible graph where all $j$'s are identified to a single vertex and $k$ cycles of length 2 are attached to it (corresponding to the $k$ \emph{blue} edges in the initial cycle).
We use the same reasoning as before and develop the entries $Y^{(1)}$ as a product of entries of $W^{(0)}$ and $X.$ Since the graph is admissible, the dominant term in the asymptotic expansion corresponds to performing a perfect matching in all cycles of length 2 as in Section \ref{sec:momentmethod} (illustrated in Figure \ref{fig:simpcycle}). Thus this adds a contribution of
 \[
 \frac{1}{n_0^{k^2}\theta_1(f)^{k}}n_0^{k^2}\left(\sigma_{w}^{2k}\sigma_x^{2k}(2k)!!\right)^{k}=1+o(1).
 \]
Here, the normalization in $n_0^{-k^2}$ comes from the fact there are $2k$ entries with a normalization of $n_0^{-k/2}$. We then have to choose $n_0^{k^2}$ indices in the second graph. Finally, we obtain for the final contribution for a cycle of length 2 that $E_1(f)=\theta_1\left(f\right)(1+o(1)).$

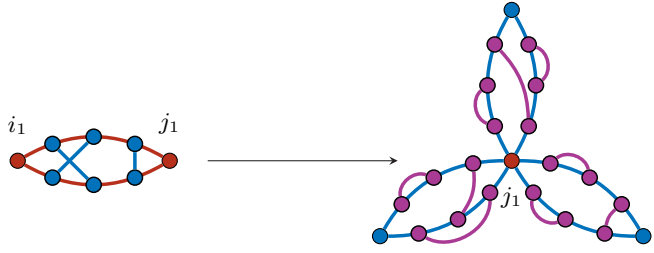
\begin{figure}[!ht]
	\centering
	\begin{tikzpicture}
		\node[fill=BrickRed, label=$i_{1}$, inner sep = 0pt, minimum size=.2cm] (10) at (0,0) {};
		\node[fill=BrickRed, label=$j_{1}$, inner sep = 0pt, minimum size=.2cm] (20) at (2,0) {};
		\draw[-, BrickRed, line width=.12em] (10) edge[bend left]  node[draw=black,line width=.05em,pos=.20, fill=RoyalBlue, inner sep = 0pt, minimum size = .2cm] (30) {} node[draw=black,line width=.05em,midway, fill=RoyalBlue, inner sep = 0pt, minimum size = .2cm] (40) {} node[draw=black,line width=.05em,pos=.80, fill=RoyalBlue, inner sep = 0pt, minimum size = .2cm] (50) {} (20) ;
		\draw[-, BrickRed, line width=.12em] (10) edge[bend right]  node[draw=black,line width=.05em,pos=.20, fill=RoyalBlue, inner sep = 0pt, minimum size = .2cm] (60) {} node[draw=black,line width=.05em,midway, fill=RoyalBlue, inner sep = 0pt, minimum size = .2cm] (70) {} node[draw=black,line width=.05em,pos=.80, fill=RoyalBlue, inner sep = 0pt, minimum size = .2cm] (80) {} (20) ;
		\draw[-, RoyalBlue, line width=.12em] (30) edge  (70);
		\draw[-, RoyalBlue, line width=.12em] (40) edge  (60);
		\draw[-, RoyalBlue, line width=.12em] (50) edge  (80);
		
		\draw[->,>=stealth,Black, label=above:$Y$] (2.5,0) -- (5,0);
		
		\node[fill=BrickRed, inner sep = 0pt, minimum size = .2cm, label=below:$j_1$] (1) at (6.5,0) {};
		\node[fill=RoyalBlue, inner sep = 0pt, minimum size = .2cm] (2) at (6.5,2) {};
		\node[fill=RoyalBlue, inner sep = 0pt, minimum size = .2cm] (3) at (4.768,-1) {};
		\node[fill=RoyalBlue, inner sep = 0pt, minimum size = .2cm,] (4) at (8.232,-1) {};

		\draw[-, RoyalBlue, line width=.12em] (1) edge[bend left]  node[draw=black,line width=.05em,pos=.20, fill=Mulberry, inner sep = 0pt, minimum size = .2cm] (5) {} node[draw=black,line width=.05em,midway, fill=Mulberry, inner sep = 0pt, minimum size = .2cm] (6) {} node[draw=black,line width=.05em,pos=.80, fill=Mulberry, inner sep = 0pt, minimum size = .2cm] (7) {} (2) ;
		\draw[-, RoyalBlue, line width=.12em] (1) edge[bend right]  node[draw=black,line width=.05em,pos=.20, fill=Mulberry, inner sep = 0pt, minimum size = .2cm] (8) {} node[draw=black,line width=.05em,midway, fill=Mulberry, inner sep = 0pt, minimum size = .2cm] (9) {} node[draw=black,line width=.05em,pos=.80, fill=Mulberry, inner sep = 0pt, minimum size = .2cm] (10) {} (2) ;
	
		\draw[-, RoyalBlue, line width=.12em] (1) edge[bend left]  node[draw=black,line width=.05em,pos=.20, fill=Mulberry, inner sep = 0pt, minimum size = .2cm] (11) {} node[draw=black,line width=.05em,midway, fill=Mulberry, inner sep = 0pt, minimum size = .2cm] (12) {} node[draw=black,line width=.05em,pos=.80, fill=Mulberry, inner sep = 0pt, minimum size = .2cm] (13) {} (3) ;
			\draw[-, RoyalBlue, line width=.12em] (1) edge[bend right]  node[draw=black,line width=.05em,pos=.20, fill=Mulberry, inner sep = 0pt, minimum size = .2cm] (14) {} node[draw=black,line width=.05em,midway, fill=Mulberry, inner sep = 0pt, minimum size = .2cm] (15) {} node[draw=black,line width=.05em,pos=.80, fill=Mulberry, inner sep = 0pt, minimum size = .2cm] (16) {} (3) ;
		
		\draw[-, RoyalBlue, line width=.12em] (1) edge[bend left]  node[draw=black,line width=.05em,pos=.20, fill=Mulberry, inner sep = 0pt, minimum size = .2cm] (17) {} node[draw=black,line width=.05em,midway, fill=Mulberry, inner sep = 0pt, minimum size = .2cm] (18) {} node[draw=black,line width=.05em,pos=.80, fill=Mulberry, inner sep = 0pt, minimum size = .2cm] (19) {} (4) ;
		\draw[-, RoyalBlue, line width=.12em] (1) edge[bend right]  node[draw=black,line width=.05em,pos=.20, fill=Mulberry, inner sep = 0pt, minimum size = .2cm] (20) {} node[draw=black,line width=.05em,midway, fill=Mulberry, inner sep = 0pt, minimum size = .2cm] (21) {} node[draw=black,line width=.05em,pos=.80, fill=Mulberry, inner sep = 0pt, minimum size = .2cm] (22) {} (4) ;
		
		\draw[-,Mulberry, line width=.12em] (5) edge[bend left=60,line width=.12em] (6);
		\draw[-,Mulberry, line width=.12em] (7) edge[bend left=20, line width=.12em] (8);
		\draw[-,Mulberry, line width=.12em] (9) edge[bend right=60, line width=.12em] (10);
		
		\draw[-,Mulberry, line width=.12em] (11) edge[bend left=60,line width=.12em] (13);
		\draw[-,Mulberry, line width=.12em] (12) edge[bend right=20, line width=.12em] (14);
		\draw[-,Mulberry, line width=.12em] (15) edge[bend right=60, line width=.12em] (16);
		
		\draw[-,Mulberry, line width=.12em] (17) edge[bend left=60,line width=.12em] (18);
		\draw[-,Mulberry, line width=.12em] (22) edge[bend left=20, line width=.12em] (19);
		\draw[-,Mulberry, line width=.12em] (20) edge[bend right=60, line width=.12em] (21);
	\end{tikzpicture}
	\caption{Construction/matching on the second layer graphs from a matching on the initial graph. The first graph gives a combinatorial factor of $(2k)!!$ while the second graph gives a factor of $(2k)!!^k$.}
	\label{fig:simpcycle}
\end{figure}

For the general case we saw that the first step of the procedure (by the construction explained before) yields a forest of \emph{star admissible graph} where each graph is given by a certain number of cycles of length 2 attached to a unique $j$-labeled vertex. 
Consider now a connected component (of the induced forest) which corresponds to a unique $j$ vertex. The number of cycles of length 2 attached to $j$ is then $k$ times the total number of cycles adjacent to $j$ in the previous steps (since we have $k$ \emph{blue} edges in each cycle of length 2). From this first process we then get the following contribution for this first two steps
\begin{multline*}
\frac{\sigma_x^{2kq+2k^2q}(1+o(1))}{n_{L} m^q \theta_1(f)^{q+kq+k^2q}}
\sum_{\substack{I_i,I_j\\I_i+I_j+1=q}}
\mathcal{A}(q,I_j,I_j,q)
n_{L}^{q-I_i}\times 
\frac{m^{q-I_j}}{n_{L-1}^{kq}}
n_{L-1}^{kq}
\left(
	\sigma_{w}^{2k}(2k)!!
\right)^q \frac{1}{n_{L-2}^{k^2q}}
n_{L-2}^{k^2q}
\left(
	\sigma_{w}^{2k}
	(2k)!!
\right)^{kq}\\
=
(1+o(1))
\sum_{k=0}^{q-1}
\mathcal{A}(q,q-k-1,k,q) 
\left(\frac{n_{L}}{m}\right)^k.
\end{multline*}
Let us explain the above formula: there are $n_{L}^{q-I_i}$ choices needed to label the $i$-labeled vertices and $m^{q-I_j}$ for the $j$-labeled vertices. For the powers of $n_{L-1}$ we take into account the normalization and the corresponding number of $\ell$ indices to choose. Finally in each cycle of length 2 we perform a perfect matching between the two niches: there are $q$ cycles of length 2 in the initial graph and $kq$ such cycles in the forest obtained. See Figure \ref{fig:simplelayer} for an illustration.

Now, we can perform one more step of the procedure, we now have a forest of these \emph{star admissible graphs} where each graph has only one $j$ vertex. To the $j$ vertex are now attached $k$ times more cycles than in the previous step. Thus, for the 3 step procedure, the total number of cycles of length 2 in the forest is given by $k^3q$. We can perform this for each layer the data goes through as the only parameter to be changed is the number of cycles of length 2 attached to each $j$ vertex.

\begin{figure}[!ht]
	\centering
	\begin{tikzpicture}
		\node[fill=BrickRed, inner sep = 0pt, minimum size=.2cm, label=below:$i_1$] (1) at (0,0) {};
		\node[fill=BrickRed, inner sep = 0pt, minimum size=.2cm, label=below:$j_1$] (2) at (1,0) {};
		\node[fill=BrickRed, inner sep = 0pt, minimum size=.2cm, label=below:$i_2$] (3) at (2,0) {};
		\node[fill=BrickRed, inner sep = 0pt, minimum size=.2cm, label=below:$j_2$] (4) at (3,0) {};

		\draw[-, BrickRed, line width=.12em] (1) edge[bend left]  node[draw=black,line width=.05em,pos=.20, fill=RoyalBlue, inner sep = 0pt, minimum size = .15cm] (11) {} node[draw=black,line width=.05em,midway, fill=RoyalBlue, inner sep = 0pt, minimum size = .15cm] (12) {} node[draw=black,line width=.05em,pos=.80, fill=RoyalBlue, inner sep = 0pt, minimum size = .15cm] (13) {} (2) ;
		\draw[-, BrickRed, line width=.12em] (1) edge[bend right]  node[draw=black,line width=.05em,pos=.20, fill=RoyalBlue, inner sep = 0pt, minimum size = .15cm] (14) {} node[draw=black,line width=.05em,midway, fill=RoyalBlue, inner sep = 0pt, minimum size = .15cm] (15) {} node[draw=black,line width=.05em,pos=.80, fill=RoyalBlue, inner sep = 0pt, minimum size = .15cm] (16) {} (2) ;
		
		\draw[-, BrickRed, line width=.12em] (2) edge[bend left]  node[draw=black,line width=.05em,pos=.20, fill=RoyalBlue, inner sep = 0pt, minimum size = .15cm] (21) {} node[draw=black,line width=.05em,midway, fill=RoyalBlue, inner sep = 0pt, minimum size = .15cm] (22) {} node[draw=black,line width=.05em,pos=.80, fill=RoyalBlue, inner sep = 0pt, minimum size = .15cm] (23) {} (3) ;
		\draw[-, BrickRed, line width=.12em] (2) edge[bend right]  node[draw=black,line width=.05em,pos=.20, fill=RoyalBlue, inner sep = 0pt, minimum size = .15cm] (24) {} node[draw=black,line width=.05em,midway, fill=RoyalBlue, inner sep = 0pt, minimum size = .15cm] (25) {} node[draw=black,line width=.05em,pos=.80, fill=RoyalBlue, inner sep = 0pt, minimum size = .15cm] (26) {} (3) ;
		
		\draw[-, BrickRed, line width=.12em] (3) edge[bend left]  node[draw=black,line width=.05em,pos=.20, fill=RoyalBlue, inner sep = 0pt, minimum size = .15cm] (31) {} node[draw=black,line width=.05em,midway, fill=RoyalBlue, inner sep = 0pt, minimum size = .15cm] (32) {} node[draw=black,line width=.05em,pos=.80, fill=RoyalBlue, inner sep = 0pt, minimum size = .15cm] (33) {} (4) ;
		\draw[-, BrickRed, line width=.12em] (3) edge[bend right]  node[draw=black,line width=.05em,pos=.20, fill=RoyalBlue, inner sep = 0pt, minimum size = .15cm] (34) {} node[draw=black,line width=.05em,midway, fill=RoyalBlue, inner sep = 0pt, minimum size = .15cm] (35) {} node[draw=black,line width=.05em,pos=.80, fill=RoyalBlue, inner sep = 0pt, minimum size = .15cm] (36) {} (4) ;

		\draw[->,>=stealth,Black, line width=.12em] (3.5,0) -- (4.5,0);
		
		\begin{scope}[xshift=6cm]
			\node[fill=BrickRed, inner sep = 0pt, minimum size=.2cm, label={[label distance=1em]left:$j_1$}] (01) at (0,0) {};
			\foreach \a in {1,2,...,6}{
				\node[fill=RoyalBlue, inner sep = 0pt, minimum size=.2cm] (\a+01) at (\a*360/6+30: 1cm) {};
			
				\draw[-, RoyalBlue, line width=.12em] (01) edge[bend left=30]  node[draw=black,line width=.05em,pos=.20, fill=Mulberry, inner sep = 0pt, minimum size = .1cm] (\a+1) {} node[draw=black,line width=.05em,midway, fill=Mulberry, inner sep = 0pt, minimum size = .1cm] (\a+2) {} node[draw=black,line width=.05em,pos=.80, fill=Mulberry, inner sep = 0pt, minimum size = .1cm] (\a+3) {} (\a+01) ;
				\draw[-, RoyalBlue, line width=.12em] (01) edge[bend right=30]  node[draw=black,line width=.05em,pos=.20, fill=Mulberry, inner sep = 0pt, minimum size = .1cm] (\a+4) {} node[draw=black,line width=.05em,midway, fill=Mulberry, inner sep = 0pt, minimum size = .1cm] (\a+5) {} node[draw=black,line width=.05em,pos=.80, fill=Mulberry, inner sep = 0pt, minimum size = .1cm] (\a+6) {} (\a+01) ;
		}
		\end{scope}
	
		\begin{scope}[xshift=8.5cm]
			\node[fill=BrickRed, inner sep = 0pt, minimum size=.2cm, label={[label distance=.5em]below:$j_2$}] (02) at (0,0) {};
			\foreach \a in {1,2,3}{
				\node[fill=RoyalBlue, inner sep = 0pt, minimum size=.2cm] (\a+02) at (\a*360/3+90: 1cm) {};
			
				\draw[-, RoyalBlue, line width=.12em] (02) edge[bend left=30]  node[draw=black,line width=.05em,pos=.20, fill=Mulberry, inner sep = 0pt, minimum size = .1cm] (\a+11) {} node[draw=black,line width=.05em,midway, fill=Mulberry, inner sep = 0pt, minimum size = .1cm] (\a+12) {} node[draw=black,line width=.05em,pos=.80, fill=Mulberry, inner sep = 0pt, minimum size = .1cm] (\a+13) {} (\a+02) ;
				\draw[-, RoyalBlue, line width=.12em] (02) edge[bend right=30]  node[draw=black,line width=.05em,pos=.20, fill=Mulberry, inner sep = 0pt, minimum size = .1cm] (\a+14) {} node[draw=black,line width=.05em,midway, fill=Mulberry, inner sep = 0pt, minimum size = .1cm] (\a+15) {} node[draw=black,line width=.05em,pos=.80, fill=Mulberry, inner sep = 0pt, minimum size = .1cm] (\a+16) {} (\a+02) ;
			}
		\end{scope}
		
		\draw[->,>=stealth, Black, line width=.12em] (9.5,0) -- (10.5,0);
		
		\begin{scope}[xshift=12cm]
			\node[fill=BrickRed, inner sep = 0pt, minimum size=.2cm] (03) at (0,0) {};
			
			\foreach \a in {1,2,...,18}{
				\node[fill=Mulberry, inner sep = 0pt, minimum size=.15cm] (\a+03) at (\a*360/18+30: 1cm) {};
				\draw[-,Mulberry, line width=.12em] (03) edge[bend left=10] (\a+03) {};
				\draw[-,Mulberry, line width=.12em] (03) edge[bend right=10] (\a+03) {};
		}
			\begin{scope}[xshift=2.5cm]
				\node[fill=BrickRed, inner sep = 0pt, minimum size=.2cm] (04) at (0,0) {};
			
				\foreach \a in {1,2,...,9}{
					\node[fill=Mulberry, inner sep = 0pt, minimum size=.15cm] (\a+04) at (\a*360/9+11: 1cm) {};
					\draw[-,Mulberry, line width=.12em] (04) edge[bend left=20] (\a+04) {};
					\draw[-,Mulberry, line width=.12em] (04) edge[bend right=20] (\a+04) {};
				}
			\end{scope}
		\end{scope}
		\draw[-, RoyalBlue] (11) edge[line width=.12em] (14);
		\draw[-, RoyalBlue] (12) edge[bend left = 60, line width=.12em] (13);
		\draw[-, RoyalBlue] (15) edge[bend right = 60,line width=.12em] (16);
		
		\draw[-, RoyalBlue] (21) edge[line width=.12em] (24);
		\draw[-, RoyalBlue] (22) edge[line width=.12em] (25);
		\draw[-, RoyalBlue] (23) edge[line width=.12em] (26);
		
		\draw[-, RoyalBlue] (31) edge[line width=.12em] (36);
		\draw[-, RoyalBlue] (34) edge[line width=.12em] (32);
		\draw[-, RoyalBlue] (35) edge[line width=.12em] (33);

		\draw[-, Mulberry] (1+1) edge[line width=.1em] (1+4);
		\draw[-, Mulberry] (1+5) edge[line width=.1em] (1+2);
		\draw[-, Mulberry] (1+6) edge[line width=.1em] (1+3);

		\draw[-, Mulberry] (2+1) edge[line width=.1em] (2+4);
		\draw[-, Mulberry] (2+2) edge[line width=.1em] (2+6);
		\draw[-, Mulberry] (2+3) edge[line width=.1em] (2+5);

		\draw[-, Mulberry] (3+1) edge[line width=.1em] (3+5);
		\draw[-, Mulberry] (3+2) edge[bend left=60, line width=.1em] (3+3);
		\draw[-, Mulberry] (3+4) edge[bend left=60, line width=.1em] (3+6);
		
		\draw[-, Mulberry] (4+1) edge[bend right= 60, line width=.1em] (4+2);
		\draw[-, Mulberry] (4+3) edge[line width=.1em] (4+6);
		\draw[-, Mulberry] (4+4) edge[bend left=60, line width=.1em] (4+5);
		
		\draw[-, Mulberry] (5+1) edge[line width=.1em] (5+6);
		\draw[-, Mulberry] (5+2) edge[line width=.1em] (5+4);
		\draw[-, Mulberry] (5+3) edge[line width=.1em] (5+5);
		
		\draw[-, Mulberry] (6+1) edge[line width=.1em] (6+6);
		\draw[-, Mulberry] (6+2) edge[bend right=60, line width=.1em] (6+3);
		\draw[-, Mulberry] (6+4) edge[bend left=60, line width=.1em] (6+5);
		
		\draw[-, Mulberry] (1+11) edge[line width=.1em] (1+14);
		\draw[-, Mulberry] (1+12) edge[line width=.1em] (1+16);
		\draw[-, Mulberry] (1+13) edge[line width=.1em] (1+15);
		
		\draw[-, Mulberry] (2+11) edge[line width=.1em] (2+16);
		\draw[-, Mulberry] (2+12) edge[line width=.1em] (2+15);
		\draw[-, Mulberry] (2+13) edge[line width=.1em] (2+14);
		
		\draw[-, Mulberry] (3+11) edge[line width=.1em] (3+14);
		\draw[-, Mulberry] (3+12) edge[line width=.1em] (3+15);
		\draw[-, Mulberry] (3+13) edge[line width=.1em] (3+16);
	\end{tikzpicture}
	\caption{Effect on going through several layers for admissible graphs with only cycles of length 2. The first step consists of separating each $j$-labeled vertex into his own graph where it is attached to cycles of length 2. At each layer after the first one, we multiply by $k$ the number of cycles attached. }
	\label{fig:simplelayer}
\end{figure}
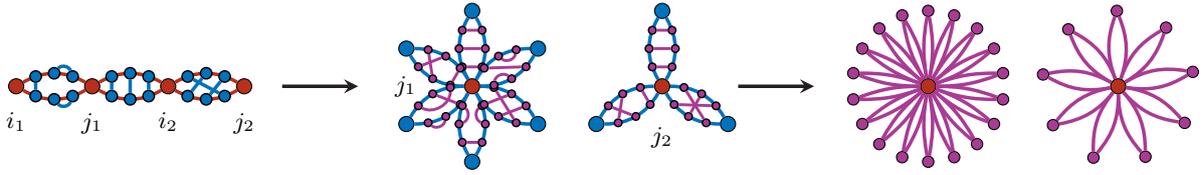   

In the whole, adding the layer $L_0$ multiplies the contribution with no $\theta_2$ by a factor
\[
\frac{1}{n_{L_0}^{k^{L-L_0} q}\theta_1^{k^{L-L_0}q}}n_{L_0}^{k^{L-L_0} q}
\theta_1^{k^{L-L_0}q}(f)
.
\]
Thus the whole contribution can be written in the following way :
\[
(1+o(1))\sum_{k=0}^{q-1}
\left(
	\frac{n_L}{m}
\right)^k
\mathcal{A}(q,q-k-1,k,q).
\]
And we obtain the final result by using that
$\frac{n_L}{m}
\rightarrow \frac{\phi}{\psi_0\psi_1\dots \psi_{L-1}}$
and
$\mathcal{A}(q,q-k-1,k,q)=\frac{1}{k+1}\binom{r}{k}\binom{r-1}{k}.$

Now, in the statement of the theorem we do not explicit the leading contribution of admissible graphs with at least one cycle of length greater than 2. We only need now to get an estimate on the other possible errors and show that they are negligible.
Using Subsection \ref{subsec:36}, the total number of cactus trees with $q$ edges does not exceed $C\xi^q$ for some constant $C$. As we are interested in the case where $\theta_2$ vanishes, it is enough to show that the error terms cannot grow faster than the Marcenko-Pastur moment. Actually using the arguments of Section 3, the whole analysis of errors remains true. The errors  can only come from subleading matchings on the graph at each possible step.  However, the main difference comes from the number of vertices at each step which is $k^{L_0}q$ instead of just $kq$. Note that it still only consists of a power of $k$ which grows slower than any power of $n_1$. 
Again, the leading contribution of the errors comes from possible multiple edges arising in the graph. Say that a given $j$ vertex is first connected to $r$ cycles of length 2 in the initial graph. At the step $L_0$, it is now connected to $k^{L_0-1}r$ cycles of length 2. Thus if at this stage we connect \emph{blue} indices together, say $p$ of them we obtain at the next step a multiple edge of multiplicity $2p$. We have a total of $2k^{L_0}r$ \emph{blue} indices to match at this stage since we have $2k$ vertices per cycle of length 2. Thus, by comparing the contribution of such matchings with the typical matching we obtain, similarly to \eqref{eq:constraintqk},
\[
\sum_{p=2}^{k^{L_0}r}n_0\left(\frac{Ckp^k}{n_0}\right)^p=o(1)\quad\text{for}\quad
k\leqslant \frac{1}{L_0}\frac{\log n_1}{\log \log n_1}.
\]
Now $L_0$ ranges from $1$ to $L-1$ so that we obtain the needed bound if $k\leqslant \frac{1}{L-1}\frac{\log n_1}{\log \log n_1}.$
\end{proof}

We now finish the proof of Theorem \ref{theo:marclayer}.
\begin{proof}[Proof of Theorem \ref{theo:marclayer}]
We have shown that for a polynomial of degree up to $\frac{1}{L-1}\frac{\log n_1}{\log\log(n_1)}$, the expected moments of the e.e.d. are those of the Mar\v{c}enko-Pastur distribution with the appropriate shape parameter. We first see that the variance of the moments is of order $k^L/n_1^2$ in order to show convergence of the actual moments. The principle is similar to that of Lemma \ref{lem:variance} as we count the corresponding graphs such that their covariance is non zero.

We can perform the same expansion as in Lemma \ref{lem:variance} and see that we have for the first layer
\begin{equation}\label{eq:variancepastur}
\mathrm{Var}\, m_q^{(L)}=
\frac{1}{n_1^2}\sum_{\mathcal{G}_1,\,\mathcal{G}_2}
\sum_{\bm{\ell_1},\bm{\ell_2}}
\mathds{E}\left[
	M_{\mathcal{G}_1}^{(L)}(\bm{\ell_1})M_{\mathcal{G}_2}^{(L)}(\bm{\ell_2})
\right]
-
\mathds{E}\left[
	M_{\mathcal{G}_1}^{(L)}(\bm{\ell_1})
\right]
\mathds{E}\left[
	M_{\mathcal{G}_2}^{(L)}(\bm{\ell_2})
\right]
\end{equation}
with 
\[
M_{\mathcal{G}}^{(L)}(\bm{\ell})=\sum_{k_1,\dots,k_{2q}=1}^K
\frac{a_{k_1}\dots a_{k_{2q}}}{m^qn_0^{\sum k_i/2}}
\prod_{p=1}^{k_1}W_{i_1\ell^1_p}^{(L)}Y_{\ell_p^1j_1}^{(L)}
\prod_{p=1}^{k_2}W_{i_2\ell^2_p}^{(L)}Y_{\ell_p^2j_1}^{(L)}
\dots
\prod_{p=1}^{k_{2q}}W_{i_1\ell^{2q}_p}^{(L)}Y_{\ell_p^{2q}j_q}^{(L)}.
\]
Now, in order to have a non vanishing contribution to the variance \eqref{eq:variancepastur}, we need to have additional identifications between the two graphs. Indeed, either at a given layer $L_0$ an entry of $W^{(L_0)}$ is matched between $\mathcal{G}_1$ and $\mathcal{G}_2$ or at the last layer there are identifications between the $X$ entries. 
In the case where there are identifications of $Y^{(L_0)}$ entries we see, by expanding the expansion with respect to the entries of $W^{(L_0-1)}$, that this implies that there are further identifications in the layers beyond $L_0$.
Since at each step we would lose an order $\mathcal{O}(q^2(k)^{2L_0)}/n_0)$ (from the choice of which vertices to identify and the fact that we have one less choice for possible indices),  we see that the leading order comes from identifying $X$ entries in the two last layers.

Thus, since the main contribution to moments are still given by admissible graphs, a similar analysis can be done as in Lemma \ref{lem:variance}: we can, right at the first layer, identify $i$ and $j$ vertices to obtain an identification on the $W^{(L)}$ entries. Or one can choose two $W^{(L_0)}$ entries to be identified at a given layer $L_0$ (or $X$ entries at the last layers $L_0=1$) and thus we obtain
\[
\mathrm{Var}\,m_q^{(L)}=\mathcal{O}\left(\frac{q^4+q^2\sum_{L_0=1}^Lk^{2L_0}+\sum_{L_0=1}^Lk^{4L_0}}{n_0^2}C^q \right)
=
\mathcal{O}\left(
\frac{k^{4L+4}}{n_0^2}
\right),
\]
since $q$ is fixed here. 

Let us now extend the result to a bounded function $f$. As in Section \ref{sec:polyapprox}, we consider a polynomial $P_k$ such that, for some $A>0,$
$
\sup_{x\in[-A,A]}\left\vert (f(x)-a_k)-P_k(x)\right\vert\leqslant C_f\frac{A^{(1+c_f)k}}{(n+1)!}.
$
Now, we can consider $Y^{(L,a_k)}$ the matrix constructed as \eqref{def:yell} with $f-a_k$ as an activation function and $Y^{(L, P_k)}$ the same matrix constructed with $P_k$. Note that we consider the same sampling of $W$ and $X$ for the construction of this model. We describe the case of $L=2$ as we can recursively do the same reasoning for a higher number of layers, for simplicity we also forget the change of variance $\sigma_x/\sqrt{\theta_1(f)}$ at each layer. As we saw in Section \ref{sec:polyapprox}, we simply need to bound
\[
\frac{1}{\sqrt{m}}\max_{1\leqslant i\leqslant n_2}\sum_{j=1}^m
\left\vert
Y^{(2,a_k)}_{ij}-Y^{(2, P_k)}_{ij}
\right\vert
=
\frac{1}{\sqrt{m}}
\max_{1\leqslant i\leqslant n_2}
\sum_{j=1}^m
\left\vert
f\left(
	\frac{W^{(1)}Y^{(1,a_k)}}{\sqrt{n_1}}
\right)_{ij}
-a_k
-
P_k\left(
	\frac{W^{(1)}Y^{(1,P_k)}}{\sqrt{n_1}}
\right)_{ij}
\right\vert.
\]
We split the right hand side into two parts and write
\begin{multline}\label{eq:splitapprox}
\left\vert
Y^{(2,a_k)}_{ij}-Y^{(2, P_k)}_{ij}
\right\vert
\\\leqslant
\left\vert
f\left(
	\frac{W^{(1)}Y^{(1,a_k)}}{\sqrt{n_1}}
\right)_{ij}
-
f\left(
	\frac{W^{(1)}Y^{(1,P_k)}}{\sqrt{n_1}}
\right)_{ij}
\right\vert+\left\vert
f\left(
	\frac{W^{(1)}Y^{(1,P_k)}}{\sqrt{n_1}}
\right)_{ij}
-a_k
-
P_n\left(
	\frac{W^{(1)}Y^{(1,P_k)}}{\sqrt{n_1}}
\right)_{ij}
\right\vert .
\end{multline}
For the first term on the right hand side of the previous equation, we bound it from the polynomial approximation. Indeed, we consider the following event
\[
\mathcal{A}_1(\delta_1)
=
\bigcap_{i=1}^{n_1}
\bigcap_{j=1}^m
\left\{
	\left\vert
		\left(
			\frac{W^{(0)}X}{\sqrt{n_0}}
		\right)_{ij}
	\right\vert
	\leqslant (\log n_1)^{1/2+\delta_1}	
\right\}
\bigcap
\left\{
\left\vert
	W^{(1)}_{ij}
\right\vert
\leqslant (\log n)^{1/\alpha+\delta_1}
\right\}.
\]
This event occurs with overwhelming probability for any $\delta_1>0$ in the sense that its probability decays faster than any polynomial. Now, on this event we can bound
\[
\left\vert
\left(
	\frac{W^{(1)}Y^{(1,a_k)}}{\sqrt{n_1}}
\right)_{ij}
-
\left(
	\frac{W^{(1)}Y^{(1,P_n)}}{\sqrt{n_1}}
\right)_{ij}
\right\vert
\leqslant
C^n\sqrt{n_1}(\log n_1)^{1/\alpha+\delta_1}\frac{(\log n_1)^{(1/2+\delta_1)n}}{n!},
\]
where we expand the entries and use the polynomial approximation. This also decays faster than any polynomial for $n=\mathcal{O}(\frac{\log n_1}{\log\log n_1})$. 
Finally, using the fact that $f$ has a bounded derivative on the event $\mathcal{A}_2(\delta_2)$ defined in \eqref{eq:eventa2}, the first term in \eqref{eq:splitapprox} goes to zero providing that $\mathcal{A}_2$ occurs with high probability.

For the second term in \eqref{eq:splitapprox}, by the previous analysis and as in Section \ref{sec:polyapprox} we only need to prove that the following event occurs with probability tending to one:
\begin{equation}\label{eq:eventa2}
\mathcal{A}_2(\delta_2)
=
\bigcap_{i=1}^{n_2}
\bigcap_{j=1}^m
\left\{
	\frac{1}{\sqrt{n_1}}
	\sum_{\ell_1=1}^{n_1}
	W_{i\ell_1}^{(1)}
	P_n\left(
		\frac{1}{\sqrt{n_0}}
		\sum_{\ell_0=1}^{n_0}
		W_{\ell_1\ell_0}^{(0)}
		X_{\ell_0 j}
	\right)
	\leqslant
	(\log n_1)^{1/2+\delta_1}
\right\}.
\end{equation}
Since we suppose that $f$ is bounded we know that on the event $\mathcal{A}_1(\delta_1)$ (which occurs with very high probability) we have that $\sup_{ij}\vert Y^{(1,P_k)}_{ij}\vert\leqslant C.$ Besides, since $W_{i\ell_1}^{(1)}$ has zero expectation, has a sub-Gaussian tail and is independent of the entries of $W^{(0)}$ and $X$, the random variable $(W^{(1)}Y^{(1)})_{ij}$ is sub-Gaussian as well. So that we obtain that there exists a $C>0$ such that 
\[
\mathds{P}\left(
\sum_{\ell_1=1}^{n_1}
	W_{i\ell_1}^{(1)}
	P_n\left(
		\frac{1}{\sqrt{n_0}}
		\sum_{\ell_0=1}^{n_0}
		W_{\ell_1\ell_0}^{(0)}
		X_{\ell_0 j}
	\right)
	>
	\sqrt{n_1}(\log n_1)^{1/2+\delta_1}
\right)
\leqslant
Ce^{-c(\log n_1)^{1+2\delta_1}}.
\] 
And finally $\mathds{P}(\mathcal{A}_2(\delta_2))\geqslant 1-n_1^{-D}$ for any $D>0$.
\end{proof}

\begin{bibdiv}
\begin{biblist}

\bib{akemann1}{article}{
author={Akemann, G.},
author={Burda, Z.},
title={ Universal microscopic correlation functions for products of independent Ginibre matrices.},
journal={ J. Phys. A Math. Theor.},
volume={ 45},
number = {46},
	pages = {465201},
	year={2012},
	}	
	
\bib{akemann2}{article}{	
author={Akemann, G.},
author={Ipsen, J.R.},
author={Kieburg, M.},
title={Products of rectangular random matrices: singular values and progressive scattering.},
journal={ Phys. Rev. E},
volume={ 88},
pages={ 052118},
year={2013},
}

\bib{bai2010spectral}{book}{
   author={Bai, Z.},
   author={Silverstein, J. W.},
   title={Spectral analysis of large dimensional random matrices},
   series={Springer Series in Statistics},
   edition={2},
   publisher={Springer, New York},
   date={2010},
   pages={xvi+551},
}

\bib{benaych2016spectral}{article}{
   author={Benaych-Georges, F.},
   author={Couillet, R.},
   title={Spectral analysis of the Gram matrix of mixture models},
   journal={ESAIM Probab. Stat.},
   volume={20},
   date={2016},
   pages={217--237},
   issn={1292-8100},
}

\bib{cebron2016universal}{article}{
  title={Universal constructions for spaces of traffics},
  author={C{\'e}bron, G.},
  author={Dahlqvist, A.},
  author={Male, C.},
  journal={arXiv preprint},
  year={2016},
}

\bib{chen2008identities}{article}{
   author={Chen, W. Y. C.},
   author={Yan, S. H. F.},
   author={Yang, L. L. M.},
   title={Identities from weighted Motzkin paths},
   journal={Adv. in Appl. Math.},
   volume={41},
   date={2008},
   number={3},
   pages={329--334},
}

\bib{choromanska2015loss}{article}{
	author = {Choromanska, A.},
	author = {Henaff, M.},
	author = {Mathieu, M.},
	author = {Ben Arous, G.},
	author = {LeCun, Y.},
    title  = {The loss surfaces of multilayer networks},
    journal = {Proceedings of the 18th International Conference on Artificial Intelligence and Statistics, {AISTATS} 2015},
  	year = {2015},
}
\bib{Zdeborova}{article}{
title={Machine learning and the physical sciences},
author={ Cirac, C.}, author={  Cranmer,K. }, author={ Daudet, L.},
author={ Schuld, M.}, 
author={ Tishby, N.}, author={ Vogt-Maranto, L.},
author={ Zdeborov{\'a}, L. },
 journal={arXiv preprint arXiv:1903.10563},
  year={2019},
}

\bib{claeys2015correlation}{article}{
  title={Correlation kernels for sums and products of random matrices},
  author={Claeys, T.},
  author={Kuijlaars, A. B. J.},
  author={Wang, D.},
  journal={Random Matrices: Theory and Applications},
  volume={4},
  number={04},
  year={2015},
}

\bib{couillet2016kernel}{article}{
   author={Couillet, R.},
   author={Benaych-Georges, F.},
   title={Kernel spectral clustering of large dimensional data},
   journal={Electron. J. Stat.},
   volume={10},
   date={2016},
   number={1},
   pages={1393--1454},
   issn={1935-7524},
}

\bib{dupic2014spectral}{article}{
  title={Spectral density of products of Wishart dilute random matrices. Part I: the dense case},
  author={Dupic, T.},
  author={Castillo, I. P.},
  journal={arXiv preprint},
  year={2014}
}

\bib{elkaroui2010spectrum}{article}{
   author={El Karoui, N.},
   title={The spectrum of kernel random matrices},
   journal={Ann. Statist.},
   volume={38},
   date={2010},
   number={1},
   pages={1--50},
   issn={0090-5364},
}

\bib{ford1956combinatorial}{article}{
   author={Ford, G. W.},
   author={Uhlenbeck, G. E.},
   title={Combinatorial problems in the theory of graphs. III},
   journal={Proc. Nat. Acad. Sci. U.S.A.},
   volume={42},
   date={1956},
   pages={529--535},
   issn={0027-8424},
}
\bib{Forrester}{article}{
       author = {Forrester, P.J.},
       author = {Liu, D.Z.},
        title = {Raney Distributions and Random Matrix Theory},
      journal = {Journ. Stat. Phys.},
         year = {2015},
       volume = {158},
       number = {5},
        pages = {1051-1082},
}

\bib{furedi1981eigenvalues}{article}{
   author={F\"{u}redi, Z.},
   author={Koml\'{o}s, J.},
   title={The eigenvalues of random symmetric matrices},
   journal={Combinatorica},
   volume={1},
   date={1981},
   number={3},
   pages={233--241},
}
	
\bib{giryes2016deep}{article}{ 
author={Giryes, R.},
author={Sapiro, G.},
author={Bronstein, A. M.}, 
journal={IEEE Transactions on Signal Processing}, 
title={Deep Neural Networks with Random Gaussian Weights: A Universal Classification Strategy?}, 
year={2016}, 
volume={64}, 
number={13}, 
pages={3444-3457}, 
}

\bib{glorot}{article}{
  title = 	 {Understanding the difficulty of training deep feedforward neural networks},
  author = 	 {Glorot, X.},
  author =   {Bengio, Y.},
  booktitle = 	 {Proceedings of the Thirteenth International Conference on Artificial Intelligence and Statistics},
  pages = 	 {249--256},
  year = 	 {2010},
  volume = 	 {9},
  publisher = 	 {PMLR},
}

\bib{hanin2018products}{article}{
  title={Products of Many Large Random Matrices and Gradients in Deep Neural Networks},
  author={Hanin, B.},
  author={Nica, M.},
  journal={arXiv preprint},
  year={2018}
}

\bib{hayou2019selection}{article}{
title={On the Selection of Initialization and Activation Function for Deep Neural Networks},
author={Hayou, S.},
author={Doucet, A.},
author={Rousseau, J.},
year={2019},
journal={Prepublication},
}

\bib{hinton2012deep}{article}{
  title={Deep neural networks for acoustic modeling in speech recognition: The shared views of four research groups},
  author={Hinton, G.},
  author={Deng, L.},
  author={Yu, D.},
  author={Dahl, G. E.},
  author={Mohamed, A.-R.},
  author={Jaitly, N.},
  author={Senior, A.},
  author={Vanhoucke, V.},
  author={Nguyen, P.},
  author={Sainath, T. N.},
  author={Others, },
  journal={IEEE Signal processing magazine},
  volume={29},
  number={6},
  pages={82--97},
  year={2012},
}

\bib{ioffe2015batch}{article}{
 author = {Ioffe, S.},
 author = {Szegedy, C.},
 title = {Batch Normalization: Accelerating Deep Network Training by Reducing Internal Covariate Shift},
 booktitle = {Proceedings of the 32Nd International Conference on International Conference on Machine Learning - Volume 37},
 year = {2015},
 pages = {448--456},
} 

\bib{krizhevsky2012imagenet}{article}{
	title = {ImageNet classification with deep convolutional neural networks},
	author = {Krizhevsky, A.},
	author = {Sutskever, I.},
	author = {Hinton, G. E.},
	journal = {Advances in Neural Information Processing Systems},
	number = {25},
pages = {1097--1105},
year = {2012},
}

\bib{kuijlaars2014singular}{article}{
  title={Singular values of products of Ginibre random matrices, multiple orthogonal polynomials and hard edge scaling limits},
  author={Kuijlaars, A. B. J.},
  author={Zhang, L.},
  journal={Communications in Mathematical Physics},
  volume={332},
  number={2},
  pages={759--781},
  year={2014},
}
\bib{LeCun}{article}{
author={ LeCun, Y.},
author={Bengio, Y.}, 
author={Hinton, G.},
title={Deep learning},
journal={Nature} ,
volume={521}, 
pages={ 436--444 },
year={2015},
}

\bib{LouartCouilletBis}{article}{
author={Louart, C.},
author={Couillet, R.},
title={ Concentration of Measure and Large Random Matrices with an application to Sample Covariance Matrices.},
journal={ arXiv preprint arXiv:1805.08295},
year={2018}
}

\bib{louart2018random}{article}{
   author={Louart, C.},
   author={Liao, Z.},
   author={Couillet, R.},
   title={A random matrix approach to neural networks},
   journal={Ann. Appl. Probab.},
   volume={28},
   date={2018},
   number={2},
   pages={1190--1248},
   issn={1050-5164},
}

\bib{marchenko1967distribution}{article}{
   author={Mar\v{c}enko, V. A.},
   author={Pastur, L. A.},
   title={Distribution of eigenvalues in certain sets of random matrices},
   language={Russian},
   journal={Mat. Sb. (N.S.)},
   volume={72 (114)},
   date={1967},
   pages={507--536},
}
\bib{Montanari}{article}{
author={ Zhou, F. },
author={ Montanari, A.},
title={ The spectral norm of random inner-product kernel matrices.},
journal={ Prob. Theory Rel. Fields},
volume={ 173},
number={1-2}, 
date={2019},
pages={ 27-85},
}
\bib{peche2009universality}{article}{
   author={P\'{e}ch\'{e}, S.},
   title={Universality results for the largest eigenvalues of some sample
   covariance matrix ensembles},
   journal={Probab. Theory Related Fields},
   volume={143},
   date={2009},
   number={3-4},
   pages={481--516},
   issn={0178-8051},
}
\bib{PecheECP}{article}{
   author={P\'{e}ch\'{e}, S.},
   title={A note on the Pennington-Worah distribution},
   journal={Elec. Comm. Probab.},
   volume={24},
   date={2019},
   number={66},
   pages={7 pp.},
}

\bib{pennington2017geometry}{article}{
  title = 	 {Geometry of Neural Network Loss Surfaces via Random Matrix Theory},
  author = 	 {Pennington, J.},
  author = {Bahri, Y.},
  booktitle = 	 {Proceedings of the 34th International Conference on Machine Learning},
  pages = 	 {2798--2806},
  year = 	 {2017},
  volume = 	 {70},
  series = 	 {Proceedings of Machine Learning Research},
  publisher = 	 {PMLR},
}

\bib{pennington2017nonlinear}{article}{
  title={Nonlinear random matrix theory for deep learning},
  author={Pennington, J.},
  author={Worah, P.},
  booktitle={Advances in Neural Information Processing Systems},
  pages={2637--2646},
  year={2017}
}

\bib{Penson2011}{article}{
title={Product of Ginibre matrices: Fuss-Catalan and Raney distributions},
author={Penson,K.},
author={Zyczkowski, K.},
journal={Phys. Rev. E},
volume={83},
number={6},
pages={061118},
year={2011},
}
\bib{rajan2006eigenvalue}{article}{
  title={Eigenvalue spectra of random matrices for neural networks},
  author={Rajan, K.},
  author={Abbott, L. F.},
  journal={Physical review letters},
  volume={97},
  number={18},
  pages={188104},
  year={2006},
  publisher={APS}
}

\bib{schmidhuber2015deep}{article}{
title = {Deep learning in neural networks: An overview},
journal = {Neural Networks},
volume = {61},
pages = {85 - 117},
year = {2015},
issn = {0893-6080},
author = {Schmidhuber, J.},
}

\bib{silverstein1995empirical}{article}{
   author={Silverstein, J. W.},
   author={Bai, Z. D.},
   title={On the empirical distribution of eigenvalues of a class of
   large-dimensional random matrices},
   journal={J. Multivariate Anal.},
   volume={54},
   date={1995},
   number={2},
   pages={175--192},
}

\bib{Soshnikov1999universality}{article}{
   author={Soshnikov, A.},
   title={Universality at the edge of the spectrum in Wigner random
   matrices},
   journal={Comm. Math. Phys.},
   volume={207},
   date={1999},
   number={3},
   pages={697--733},
   issn={0010-3616},
}

\bib{wu2016google}{article}{
  author = {Wu, Y.},
  author = {Schuster, M.},
  author = {Chen, Z.},
  author = {Le, Q. V.},
  author = {Norouzi, M.},
  author = {Macherey, W.},
  author = {Krikun, M.},
  author = {Cao, Y.},
  author = {Gao, Q.},
  author = {Macherey, K.},
  author = {Others, },
  journal = {arXiv preprint},
  title = {Google's neural machine translation system: Bridging the gap between human and machine translation},
  year = {2016}
}

\bib{zhang2012nonlinear}{article}{
  title={Nonlinear system modeling with random matrices: echo state networks revisited},
  author={Zhang, B.},
  author={Miller, D. J.},
  author={Wang, Y.},
  journal={IEEE trans. Neural Netw. Learn. Syst.},
  volume={23},
  number={1},
  pages={175--182},
  year={2012},
  publisher={IEEE}
}
\end{biblist}
\end{bibdiv}

\end{document}